\newtheorem{pro}{Proposition}[section]
 \newtheorem{cor}[pro]{Corollary}
 \newtheorem{lem}[pro]{Lemma}
 \newtheorem{rem}[pro]{Remark}
 \newtheorem{exa}[pro]{Example}
  \newtheorem{defi}[pro]{Definition}
 \newtheorem{prop}[pro]{Proposition}
 \newtheorem{thm}[pro]{Theorem}
 \newcommand{\h}{\mathcal H}
   \newcommand{\am}{\mathcal{A} }
 \numberwithin{equation}{section}
\begin{document}

\setcounter{page}{1}

\title[  Controlled $K$-Fusion Frame in  Hilbert C$^*$-modules]{Robustness of  controlled $K$-Fusion Frame in Hilbert  C$^*$-modules under erasures of submodules}

\author[NADIA ASSILA, SAMIR KABBAJ and MOHAMED OTMANI ]{NADIA ASSILA$^{1*}$, SAMIR KABBAJ $^2$ \MakeLowercase{and} MOHAMED OTMANI$^3$}

\address{$^{1}$ Department of Mathematics, University of Ibn Tofail, B.P. 133, Kenitra, Morocco}
\email{\textcolor[rgb]{0.00,0.00,0.84}{samkabbaj@yahoo.fr}}

\address{$^{2}$Department of Mathematics, University of Ibn Tofail, B.P. 133, Kenitra, Morocco}
\email{\textcolor[rgb]{0.00,0.00,0.84}{nadiyassila@gmail.com}}

\address{$^{1}$Department of Mathematics, University of Ibn Tofail, B.P. 133, Kenitra, Morocco}
\email{\textcolor[rgb]{0.00,0.00,0.84}{atmanim17@gmail.com}}

\address{ Laboratoire des Equations aux Dérivées Partielles\\ Algèbre et Géométrie Spectrale.
  }

\subjclass[2010]{42C15, 46L06, 46A35, 26A18}

\keywords{Fusion Frame, $K$-Fusion Frame, Controlled Fusion Frame, Controlled $K$-Fusion Frame, Hilbert C$^*$-modules.}

\date{02/10/2021; 
\newline \indent $^{*}$Corresponding author}

\begin{abstract}
Controlled $\ast$-K-fusion frames are generalization of controlled fusion frames in frame theory. In this paper, we propose the notion of controlled $\ast$-k-fusions frames on Hilbert $C^{\ast}$-modules. We give some caraterizations and some of their properties are obtained. 
Then we study the erasures of submodules of a controlled $k$-fusion frame in Hilbert $C^{\ast}$-modules and we present some sufficient conditions under which a sequence remains a standart controlled k-fusion frame after deletion of some submodules. 
Finally, we introduce a perturbation for controlled $K$-fusion frames in Hilbert $C^{\ast}$-modules and it is shown that under some conditions controlled $K$-fusion frames are stable under this perturbation, and we generalize some of the results obtained for perturbations of controlled $K$-fusion frames.
\end{abstract}
  \maketitle

\section{Introduction}
In recent years the theory of frames has had an intensive development and many important contributions have been obtained. The first results on the class of frames started to appear in the early 1952s in the work of Duffin and Schaeffer to study nonharmonic Fourier series and widely studied by Daubechies, Grossman and Meyer in 1986. See (\cite{BPY06}, \cite{CA00}, \cite{CH03}, \cite{DGM86}, \cite{DS52}) for more information on frame theory and its applications. M. Frank and D. Larson generalized this notion to the situation of Hilbert $C^*$-modules. Hilbert $C^*$-modules are generalization of Hilbert spaces by allowing the inner product to take the values in a $C^*$-algebra rather than in the field of complex numbers. but the theory of Hilbert $C^*$-modules is different from the theory of Hilbert spaces, for exemple, no any closed submodule of a Hilbert $C^*$-modules is complemented.\\
Controlled frames have been recently introduced by Balazs et. al \cite{BA10} in Hilbert spaces to improve the numerical efficiency of interactive algorithms for inverting the frame operator in Hilbert spaces, however they are used earlier in\cite{Ab05}  for spherical wavelets.

When a huge amount of data has to be processed, it is often advantageous to subdivide a frame system into smaller subsystems and combine locally data vectors. This gives rise to the concept of fusion frames (or frames of subspaces)[\cite{CK08}] (see also [\cite{CH03}, Chapter 13]), which are a generalization of frames. They allow decompositions of the elements of H into weighted orthogonal projections onto closed subspaces.
Many concepts of classical frame theory have been generalized to the setting of fusion frames. $K$-fusion frames as a generalization of fusion frame, and they share many properties with the fusion frames, but there is different property, like the irreversibility of frame operator of $K$-fusion frames.

The paper is organized as follows. In Sec. $2$ we review the concept of Hilbert $C^*$-modules, fusion frames and controlled fusion frames  in Hilbert $C^*$-modules. In Sec. $3$, we introduce controlled $K$-fusion frames in Hilbert $C^*$-modules and characterize them,  also the analysis, synthesis  controlled $K$-$*$-frame operator will be reviewed,  and we show that they share many useful properties with their corresponding notions in Hilbert space. Next, we give conditions under which the removal of subspaces from controlled *-K-fusion frames leaves again a controlled *-K-fusion frames.
In Sec. $4$, we also introduce a new perturbation for  controlled $K$-fusion frames in Hilbert $C^*$-modules and it is shown that under some conditions controlled $K$-fusion frames are stable under this perturbation, and we generalize some of the results obtained for perturbations of controlled $K$-fusion frames.

\section{Preliminaries}
 In this section, we collect the basic notations and some preliminary results. We need to recall some notations and basic definitions.

 Throughout this paper, the symbol $I$ is reserved for a finite or countable index set, $\am$ is  a unital $C^*$-algebra with identity $1_{\am}$ and $\mathcal{H}\quad and\quad \mathcal{K}$ are Hilbert $C^*$-modules over $\mathcal{A}$. We reserve the notation $End^*_{\mathcal{A}}(\mathcal{H},\mathcal{K})$ for the set of all adjointable operators from $\mathcal{H}$ to $\mathcal{H}$. If $W$ is a closed submodule of $\mathcal{H}$ and $End^*_{\mathcal{A}}(\mathcal{H},\h)$ is abbreviated to $End^*_{\mathcal{A}}(\mathcal{H})$, we denote $\pi_{W}$ is the orthogonal projection of $\mathcal{H}$ into $W$. we say $W$ is orthogonally complemented if $\h= W\oplus W^{\perp}$ and in this case $\pi_{W}\in End^*_{\mathcal{A}}(\mathcal{H},W)$. And let $GL^+(\mathcal{H})$ be the set of all positive bounded linear invertible operators on $\h$ with bounded inverse.
 \begin{defi}\cite{LA95}
 Let $\mathcal{A}$ be a C$^*$-algebra and $\h$ be a left $\am$-module, such that the linear structures of $\am$ and $\h$ are compatible. A pre-Hilbert $\mathcal{A}$-module is a  left $\mathcal{A}$-module $\h$ equipped with a sesquilineare  form $\langle .,.\rangle:\quad \h\times \h \longrightarrow \mathcal{A}$ with the following properties:\begin{itemize}
 \item[i)] $\langle x,x\rangle_{\am} \geq 0$; $\langle x,x\rangle_{\am} =0$ if only if $x=0$ for all $x\in \h$,
 \item[ii)] $\langle ax, y\rangle_{\am}=a\langle x,y\rangle_{\am}$ for all $x,y\in \h$ and $ a\in \mathcal{A}$,
 \item[iii)] $\langle x, y\rangle_{\am}= \langle y, x\rangle_{\am}^*$ for all $x,y\in\h$.
 \end{itemize}
 \end{defi}
The map $\langle .,.\rangle$ is called an $\mathcal{A}$-valued inner product. For $x\in \h$, put $\Vert x \Vert_\h :=\Vert \langle x,x\rangle\Vert^{\frac{1}{2}}_{\am}$.
\\ A pre-Hilbert $\mathcal{A}$-module $\h$ is called a Hilbert $\am$-module or a Hilbert C$^*$-module over $\am$ if it is complete with respect its norme.\\
 We usually skip the subscript $\h$ when it does not lead to confusion of norms.\\
A C$^*$-algebra $\am$ itself can be recognized as a Hilbert $\am$-module with the $\am$-valued inner product $\langle a,b\rangle= ab^*$, for any $a,b\in \am$. \\
 The standard Hilbert $\am$-module $l_2(\mathcal{H},I)$ is defined by \begin{align*}
 l_2(\h,I):=\{ \{f_i\}_{i\in I}\subset \h, \sum_{i\in I}\langle f_i,f_i \rangle_{\am} \quad Converge \quad in \quad \Vert.\Vert\},
 \end{align*}
 whith the inner product $\langle \{f_i\}_{i\in I},\{g_i\}_{i\in I}\rangle=\sum_{i\in I}f_ig_i^*$, and we denote by the norm of this Hilbert C$^*$-module by $\Vert.\Vert_{l_2(\mathcal{H})}$.\\
For more detailes about Hilbert C$^*$ module, we refer the reader to \cite{LA95}.\\
For each element $a$ in $C^*$-algebra we have $\vert a\vert=(a^*a)^{\frac{1}{2}}$, and we define $\vert x\vert= (\langle x,x\rangle)^{\frac{1}{2}}.$ An element $a$ of a $C^*$-algebra $\am$ is positive if $a^*=a$ and its spectrum is a subset of positive real numbers.\\
 We call $\mathcal{Z}(\am)=\{ a\in \am\,:\, ab=ba\,\,\forall b\in \am\}$ the center of $\am$, if $a\in \mathcal{Z}(\am)$ then $a^*\in \mathcal{Z}(\am)$ element of $\mathcal{Z}(\am)$, then $a^{\frac{1}{2}}\in \mathcal{Z}(\am)$.
 
 \begin{defi} $($ See \cite{EL95} $)$ Let $\mathcal{B}(\mathcal{H},\mathcal{K})$ denote the set of all bounded $\mathcal{A}$-homomorphism operators from $\mathcal{H}$ to $\mathcal{K}$. we say that $T$ is adjointable if there exists a map  $T^*$ from $\mathcal{K}$ to $\mathcal{H}$ such that \begin{align*}
 \langle Tx,y \rangle_{\am}=\langle x, T^*y\rangle_{\am} \quad x\in \mathcal{H}, y\in \mathcal{K}.
 \end{align*}
 \end{defi}
 We denote by $End^*_{\mathcal{A}}(\mathcal{H},\mathcal{K})$ the set of all adjointable operators from $\mathcal{H}$ to $\mathcal{K}$.\\
 Every element of $End^*_{\mathcal{A}}(\mathcal{H},\mathcal{K})$ is a bounded $\mathcal{A}$-homomorphism but is important to realize that converse is false: a bounded $\mathcal{A}$-homomorphism need not to be adjointabe (see \cite{NW93}).
 \begin{defi}\cite{NW93}
 Let $\h$ and $\mathcal{K}$ be two Hilbert $C^*$-modules we say that a closed submodule $F$ of $\h$ is topologically complemented if there is a closed submodule $G$ of $\h$ with $\h = F + G,\quad F\cap
G = \{0\}\quad(briefly \quad H = F \oplus G)$. Furthermore, $F$ is said to be orthogonally complemented if $\h =F \oplus F^{\perp} $, where $F^{ \perp} = \{x \in \h \,\, such\, that \quad\langle x, y\rangle_{\am} = 0\,\,\, for\,\,\, any \,\,y \in F\}$.
 \end{defi}
By definition, if $F$ is orthogonally complemented then $F$ is topologically complemented, but the reverse is not true (see for a counterexample [\cite{NW93} page $7$]). However, an exception is that every nullspace of an element of $End^*_{\mathcal{A}}(\mathcal{H},\mathcal{K})$ with closed range is orthogonally complemented, which can be stated as follows:
\begin{lem} $($ See \cite{EL95} $)$
Let $\h$ and $\mathcal{K}$ be two Hilbert $\am$-modules, and $T \in End^*_{\mathcal{A}}(\mathcal{H},\mathcal{K})$. If $T$ has closed range, then $T^*$ also has closed range, and \begin{equation}
\h= Ker(T)\oplus Im(T^*),\quad \mathcal{K}=Ker(T^*)\oplus Im(T).
\end{equation}
\end{lem}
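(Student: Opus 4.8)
The plan is to reduce the statement to the case of a positive operator and then apply continuous functional calculus inside the $C^*$-algebra $End^*_{\mathcal{A}}(\mathcal{H})$. First I would set $S:=(T^*T)^{1/2}\in End^*_{\mathcal{A}}(\mathcal{H})$, so that $S\geq 0$, and record the norm identity
\[
\Vert Sx\Vert^2=\Vert\langle x,S^2x\rangle_{\am}\Vert=\Vert\langle x,T^*Tx\rangle_{\am}\Vert=\Vert\langle Tx,Tx\rangle_{\am}\Vert=\Vert Tx\Vert^2,
\]
valid for every $x\in\h$. Hence $\Vert Sx\Vert=\Vert Tx\Vert$ for all $x$, which gives at once $Ker(S)=Ker(T)$. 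Moreover, by the open mapping theorem applied to $T\colon\h\to Im(T)$ (a surjection of Banach spaces, since $Im(T)$ is closed), the closedness of $Im(T)$ is equivalent to the existence of $c>0$ with $\Vert Tx\Vert\geq c\,\mathrm{dist}(x,Ker(T))$ for all $x$; because $\Vert Sx\Vert=\Vert Tx\Vert$ and $Ker(S)=Ker(T)$, the very same estimate holds for $S$, so $Im(S)$ is closed as well.

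The heart of the argument — and the step I expect to be the main obstacle — is the following claim about a positive $S\in End^*_{\mathcal{A}}(\mathcal{H})$ with closed range: $0$ is an isolated point of $\sigma(S)$ (the cases $\sigma(S)\subset\{0\}$ and $0\notin\sigma(S)$ being trivial). I would prove this by contradiction. If $0$ were a cluster point of $\sigma(S)$, choose real continuous functions $f_k\geq 0$ on $\sigma(S)$ with pairwise disjoint supports contained in shrinking intervals $(0,\varepsilon_k]$, $\varepsilon_k\to 0$, and with $f_k(S)\neq 0$ (possible because $\sigma(S)$ accumulates at $0$). Pick $y_k\in\h$, $\Vert y_k\Vert\leq 1$, so that $w_k:=f_k(S)y_k$ has norm bounded away from $0$. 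Since $\Vert Sf_k(S)\Vert=\sup_{t}\vert tf_k(t)\vert\leq\varepsilon_k$, we get $\Vert Sw_k\Vert\leq\varepsilon_k\to 0$. On the other hand $f_k$ vanishes at $0$, so $f_k(S)$ annihilates $Ker(S)$, whence $w_k\perp Ker(S)$; then positivity of the $\am$-valued inner product yields $\mathrm{dist}(w_k,Ker(S))\geq\Vert w_k\Vert$, which contradicts the closed-range estimate $\Vert Sw_k\Vert\geq c\,\mathrm{dist}(w_k,Ker(S))$ established above.

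Once $0$ is isolated in $\sigma(S)$, the characteristic function $\chi_{\{0\}}$ is continuous on $\sigma(S)$, so $q:=\chi_{\{0\}}(S)$ is a self-adjoint projection in $End^*_{\mathcal{A}}(\mathcal{H})$ with $Im(q)=Ker(S)$, and $S$ restricts to an invertible operator of $(1-q)\h$ onto itself (on $\sigma(S)\setminus\{0\}$ the function $t$ is bounded away from $0$). Consequently $Im(S)=Im(S^2)=(1-q)\h=Ker(S)^{\perp}$ and $\h=Ker(S)\oplus Im(S)$ orthogonally. Returning to $T$ via $S^2=T^*T$ and $Ker(S)=Ker(T)$, the chain of inclusions
\[
Im(T^*T)\subseteq Im(T^*)\subseteq Ker(T)^{\perp}=Ker(S)^{\perp}=Im(S^2)=Im(T^*T)
\]
(the middle inclusion holding because $\langle T^*z,v\rangle_{\am}=\langle z,Tv\rangle_{\am}=0$ for $v\in Ker(T)$) forces all of these submodules to coincide. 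In particular $Im(T^*)=Ker(T)^{\perp}$ is closed, so $T^*$ has closed range, and $\h=Ker(T)\oplus Im(T^*)$.

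Finally, I would apply everything just proved to the adjointable operator $T^*$ in place of $T$: it has closed range by the previous step, and its adjoint is $T$, so the same reasoning delivers $\mathcal{K}=Ker(T^*)\oplus Im(T)$, which completes the proof. The only genuinely delicate point is the spectral claim in the second paragraph; the rest is bookkeeping with orthogonal projections, functional calculus, and the open mapping theorem.
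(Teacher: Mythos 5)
The paper does not prove this lemma at all: it is quoted, with citation, from Lance \cite{EL95} (Theorem 3.2 there), so there is no in-paper argument to compare against. Your proof is correct and is essentially the standard one from that source: the reduction to $S=(T^*T)^{1/2}$ via the identity $\Vert Sx\Vert=\Vert Tx\Vert$, the open-mapping estimate $\Vert Tx\Vert\geq c\,\mathrm{dist}(x,Ker(T))$, the spectral-gap argument showing $0$ is isolated in $\sigma(S)$ (which is indeed the one genuinely delicate point, and your contradiction via $w_k=f_k(S)y_k$ with $w_k\perp Ker(S)$ and $\Vert Sw_k\Vert\to 0$ is sound, using that $a\leq a+b$ for positive $a,b$ gives $\Vert w_k-v\Vert\geq\Vert w_k\Vert$), and the chain $Im(T^*T)\subseteq Im(T^*)\subseteq Ker(T)^{\perp}=Im(S^2)=Im(T^*T)$ followed by applying the result to $T^*$ all check out.
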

 \begin{prop} $($See \cite{NW93}$)$ If $T\in End^*_{\mathcal{A}}(\mathcal{H},\mathcal{K})$, then \begin{align}
 \langle T(h), T(h)\rangle_{\am} \leq \Vert T \Vert^2 \langle h,h\rangle_{\am}, h\in \h.
 \end{align}
 \end{prop}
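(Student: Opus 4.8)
The plan is to deduce the stated $\am$-valued estimate from an operator inequality in the $C^{*}$-algebra $End^{*}_{\am}(\h)$, namely the positivity of $S:=\|T\|^{2}I_{\h}-T^{*}T$, and then to convert that operator positivity into positivity of the induced $\am$-valued form.

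First I would note that $End^{*}_{\am}(\h)$ is a unital $C^{*}$-algebra (with unit $I_{\h}$), that $T^{*}T$ is a positive element of it, and that the $C^{*}$-identity gives $\|T^{*}T\|=\|T\|^{2}$. Since a positive element of a $C^{*}$-algebra has spectrum contained in $[0,\|\cdot\|]$, the spectrum of $S=\|T\|^{2}I_{\h}-T^{*}T$ lies in $[0,\|T\|^{2}]$, so $S\geq 0$. Hence $S$ admits a square root $S^{1/2}\in End^{*}_{\am}(\h)$ by continuous functional calculus, and in particular $S=R^{*}R$ with $R=S^{1/2}$.

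Then I would fix $h\in\h$ and compute, using $S=R^{*}R$, adjointability of $R$, and the first axiom of the $\am$-valued inner product,
\begin{align*}
\langle Sh,h\rangle_{\am}=\langle R^{*}Rh,h\rangle_{\am}=\langle Rh,Rh\rangle_{\am}\geq 0.
\end{align*}
On the other hand, since $\|T\|^{2}$ is a nonnegative scalar, linearity of the inner product in its first slot together with $\langle Tx,y\rangle_{\am}=\langle x,T^{*}y\rangle_{\am}$ gives
\begin{align*}
\langle Sh,h\rangle_{\am}=\|T\|^{2}\langle h,h\rangle_{\am}-\langle T^{*}Th,h\rangle_{\am}=\|T\|^{2}\langle h,h\rangle_{\am}-\langle Th,Th\rangle_{\am}.
\end{align*}
Comparing the two displays yields $\langle Th,Th\rangle_{\am}\leq\|T\|^{2}\langle h,h\rangle_{\am}$, which is the claim.

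The only step that is not entirely routine is the implication ``$S\geq 0$ as an operator $\Rightarrow$ $\langle Sh,h\rangle_{\am}\geq 0$ in $\am$'': this is precisely where the factorization $S=R^{*}R$ (equivalently, the existence of a square root inside $End^{*}_{\am}(\h)$) is used, and it cannot be replaced by order properties of $\am$ by itself. The remaining ingredients — the $C^{*}$-identity $\|T^{*}T\|=\|T\|^{2}$, the spectral description of $\|T\|^{2}I_{\h}-T^{*}T$, and the inner-product manipulations — are standard.
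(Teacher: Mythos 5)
Your argument is correct, and it is essentially the textbook proof: the paper itself offers no proof of this proposition, merely citing Wegge-Olsen \cite{NW93}, and the standard references establish it exactly as you do, by observing that $S=\Vert T\Vert^2 I_{\h}-T^*T$ is a positive element of the unital $C^*$-algebra $End^*_{\am}(\h)$ and then writing $\langle Sh,h\rangle_{\am}=\langle S^{1/2}h,S^{1/2}h\rangle_{\am}\geq 0$. You also correctly isolate the one non-trivial point, namely that operator positivity must be converted to positivity of the $\am$-valued form via the square root, not via order properties of $\am$ alone; the only ingredient you take for granted is that $T^*T$ is itself a positive element of $End^*_{\am}(\h)$ (standard, e.g.\ via the linking algebra or Lance's characterization of positivity through $\langle T^*Th,h\rangle_{\am}=\langle Th,Th\rangle_{\am}\geq 0$), which is acceptable to cite.
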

 \begin{lem} $($See \cite{LA07}$)$ Let $\h$ and $\mathcal{K}$ be two Hilbert $\am$-modules,and $T \in End^*_{\mathcal{A}}(\mathcal{H},\mathcal{K})$. Then the following statemant are equivalent:
 \begin{itemize}
 \item[i)] $T$ is surjective.
 \item[ii)]$T^*$ is bounded below with respect to norm (i.e.) there is $m>0$ such that \begin{equation}\label{lem: operatornorme}
 \Vert T^*x\Vert\geq m\Vert x\Vert,\quad for\,\, all\,\, x\in\mathcal{K}.
 \end{equation}
 \item[iii)] $T^*$ is bounded below with respect to the inner product (i.e.) there is $m'>0$ such that \begin{equation}\label{lem: operatornorme}
 \langle T^*x,T^*x\rangle_{\am}\geq m'\langle x,x\rangle_{\am},\quad for\,\, all\,\, x\in\mathcal{K}.
 \end{equation}
 \end{itemize}
 \end{lem}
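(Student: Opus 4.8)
The plan is to establish the three conditions as equivalent by the cyclic chain $(iii)\Rightarrow(ii)\Rightarrow(i)\Rightarrow(iii)$, so that each implication stays short and only the last one carries real content.

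\textbf{$(iii)\Rightarrow(ii)$.} This should be immediate. Both $\langle T^*x,T^*x\rangle_{\am}$ and $\langle x,x\rangle_{\am}$ are positive elements of $\am$, and the $C^*$-norm is monotone on positive elements, so from $\langle T^*x,T^*x\rangle_{\am}\geq m'\langle x,x\rangle_{\am}$ one gets $\Vert T^*x\Vert^2=\Vert\langle T^*x,T^*x\rangle_{\am}\Vert\geq m'\Vert\langle x,x\rangle_{\am}\Vert=m'\Vert x\Vert^2$, i.e. (ii) holds with $m=\sqrt{m'}$.

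\textbf{$(ii)\Rightarrow(i)$.} Assume $\Vert T^*x\Vert\geq m\Vert x\Vert$ for all $x\in\mathcal{K}$. Then $T^*$ is injective, and a routine Cauchy-sequence argument (if $T^*x_n\to y$ in $\h$, the lower bound shows $(x_n)$ is Cauchy, hence convergent to some $x\in\mathcal{K}$ by completeness, and $T^*x=y$ by continuity) shows that $Im(T^*)$ is closed. Applying the closed-range Lemma recalled above to $T^*\in End^*_{\mathcal{A}}(\mathcal{K},\mathcal{H})$ yields that $T=(T^*)^*$ has closed range and $\mathcal{K}=Ker(T^*)\oplus Im(T)$; since $Ker(T^*)=\{0\}$ this gives $\mathcal{K}=Im(T)$, so $T$ is surjective.

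\textbf{$(i)\Rightarrow(iii)$.} Here is where the work lies. If $T$ is surjective, then $Im(T)=\mathcal{K}$ is closed, so the closed-range Lemma gives $\mathcal{H}=Ker(T)\oplus Im(T^*)$ with $Im(T^*)$ closed and $Ker(T^*)=\{0\}$. Consider $S:=TT^*\in End^*_{\mathcal{A}}(\mathcal{K})$, which is positive and self-adjoint. It is injective: $Sx=0$ forces $\langle T^*x,T^*x\rangle_{\am}=\langle Sx,x\rangle_{\am}=0$, hence $T^*x=0$, hence $x=0$. It is surjective: given $y\in\mathcal{K}$, write $y=Tz$ with $z\in\mathcal{H}$ by surjectivity of $T$, then decompose $z=z_0+T^*w$ with $z_0\in Ker(T)$ (using that $Im(T^*)$ is closed), so $y=Tz=TT^*w=Sw$. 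Thus $S$ is a bijective adjointable operator, hence invertible in the unital $C^*$-algebra $End^*_{\mathcal{A}}(\mathcal{K})$; being positive and invertible, its spectrum is contained in $[\Vert S^{-1}\Vert^{-1},\Vert S\Vert]$, so $S\geq\Vert S^{-1}\Vert^{-1}\,Id_{\mathcal{K}}$. Translating this operator inequality into inner products gives $\langle T^*x,T^*x\rangle_{\am}=\langle Sx,x\rangle_{\am}\geq\Vert S^{-1}\Vert^{-1}\langle x,x\rangle_{\am}$ for all $x\in\mathcal{K}$, which is (iii) with $m'=\Vert S^{-1}\Vert^{-1}$.

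The only genuinely delicate point is the last step of $(i)\Rightarrow(iii)$: passing from "$S$ is a positive invertible element of a unital $C^*$-algebra" to the scalar lower bound $S\geq\Vert S^{-1}\Vert^{-1}\,Id_{\mathcal{K}}$, and then back to an inequality of $\am$-valued inner products; everything else is bookkeeping with the closed-range Lemma and with the open mapping theorem (for boundedness and adjointability of $S^{-1}$).
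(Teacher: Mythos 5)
The paper does not prove this lemma at all --- it is quoted verbatim from \cite{LA07} as a known preliminary --- so there is no internal proof to compare against; I can only assess your argument on its own terms, and it is correct. The cyclic chain $(iii)\Rightarrow(ii)\Rightarrow(i)\Rightarrow(iii)$ is sound: the norm monotonicity on positive elements handles $(iii)\Rightarrow(ii)$; the Cauchy argument plus the closed-range lemma (Lemma 2.4 of the preliminaries) correctly yields $\mathcal{K}=Ker(T^*)\oplus Im(T)=Im(T)$ for $(ii)\Rightarrow(i)$; and the bijectivity of $S=TT^*$, its invertibility in $End^*_{\mathcal{A}}(\mathcal{K})$, and the spectral bound $S\geq \Vert S^{-1}\Vert^{-1}\,Id_{\mathcal{K}}$ (with the passage to $\am$-valued inner products via positivity of $S-\Vert S^{-1}\Vert^{-1}Id_{\mathcal{K}}$) give $(i)\Rightarrow(iii)$. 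Worth noting: your entire $(i)\Rightarrow(iii)$ step is essentially a re-derivation of part (ii) of the Paschke lemma already recorded in the paper's preliminaries (Lemma \ref{lem: operatinverse}), which states directly that $T$ surjective implies $TT^*$ invertible with $\Vert (TT^*)^{-1}\Vert^{-1}\mathcal{I}\leq TT^*$; citing that would have shortened the argument, but proving it from scratch as you do makes the lemma self-contained and costs nothing in correctness.
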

 \begin{lem}\cite{LA95}
 If $\varphi:\quad \am\longrightarrow \mathcal{B}$ is a $*$-homomorphism between $C^*$-algebras, then $\varphi$ is increasing, that is, if $a\leq b$, then $\varphi(a)\leq\varphi(b)$.
 \end{lem}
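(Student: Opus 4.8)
The plan is to reduce the monotonicity statement to the single fact that a $*$-homomorphism sends positive elements to positive elements. Indeed, suppose $a\le b$ in $\am$. By definition of the order this means $b-a\ge 0$, and since $\varphi$ is linear we have $\varphi(b)-\varphi(a)=\varphi(b-a)$. Hence it suffices to prove: if $c\in\am$ with $c\ge 0$, then $\varphi(c)\ge 0$ in $\mathcal{B}$. Applying this to $c=b-a$ gives $\varphi(b)-\varphi(a)\ge 0$, i.e. $\varphi(a)\le\varphi(b)$, which is the claim.

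To establish the positivity-preserving property I would use the existence of positive square roots. Since $c\ge 0$, there is a self-adjoint element $d\in\am$ with $d^2=c$ (namely $d=c^{1/2}$); if $\am$ is non-unital this $d$ still lies in $\am$, since it belongs to the $C^*$-subalgebra generated by $c$, equivalently one may pass to the unitization. Because $\varphi$ is multiplicative and $*$-preserving, $\varphi(c)=\varphi(d^2)=\varphi(d)^2$ and $\varphi(d)^*=\varphi(d^*)=\varphi(d)$, so $\varphi(d)$ is a self-adjoint element of $\mathcal{B}$. Since the square of any self-adjoint element $s$ of a $C^*$-algebra is positive, being $s^2=s^*s\ge 0$, we conclude $\varphi(c)=\varphi(d)^2\ge 0$, as required.

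An alternative route, avoiding square roots, extends $\varphi$ to a $*$-homomorphism $\tilde\varphi$ between the unitizations and uses the spectral characterization of positivity: a $*$-homomorphism cannot enlarge the spectrum, so $\sigma(\tilde\varphi(x))\subseteq\sigma(x)\cup\{0\}$ for every $x$. If $c\ge 0$ then $\sigma(c)\subseteq[0,\infty)$, hence $\sigma(\tilde\varphi(c))\subseteq[0,\infty)$, and together with self-adjointness of $\tilde\varphi(c)$ this gives $\varphi(c)=\tilde\varphi(c)\ge 0$. I do not expect a genuine obstacle here; the only point needing a line of care is the non-unital case, where one must make sure the square root $d$ (or the unital extension $\tilde\varphi$) is available — both are standard facts — after which the identity $\varphi(c)=\varphi(d)^*\varphi(d)$ closes the argument at once.
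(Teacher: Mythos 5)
Your proof is correct. The paper states this lemma only as a cited result from Lance's book and supplies no proof of its own; your argument --- reducing monotonicity to positivity preservation via linearity, writing $c=d^*d$ with $d=c^{1/2}$ taken from the $C^*$-subalgebra generated by $c$, and concluding $\varphi(c)=\varphi(d)^*\varphi(d)\geq 0$ --- is the standard textbook proof, and your spectral-permanence alternative is an equally valid route.
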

 \begin{lem} \cite{WP73}\label{lem: operatinverse} Let $\h$ and $\mathcal{K}$ be two Hilbert $\am$-modules,and $T \in End^*_{\mathcal{A}}(\mathcal{H},\mathcal{K})$. Then,\begin{itemize}
 \item[i)] If $T$ is injectif and $T$ has closed range, then the adjointable map $T^*T$ is invertible and\begin{equation*}
 \Vert (T^*T)^{-1}\Vert \mathcal{I}_{\mathcal{K}}\leq T^*T\leq \Vert T\Vert^2 \mathcal{I}_{\mathcal{K}}.
 \end{equation*}
 \item[ii)] If $T$ is surjective, then adjointable map $TT^*$ is invertible and \begin{equation*}
 \Vert (TT^*)^{-1}\Vert^{-1} \mathcal{I}_{\mathcal{K}}\leq TT^*\leq \Vert T\Vert^2 \mathcal{I}_{\mathcal{K}}.
 \end{equation*}
 \end{itemize}
 
 \end{lem}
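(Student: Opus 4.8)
The plan is to reduce both statements to the $C^*$-algebra structure of $End^*_{\mathcal{A}}(\mathcal{H})$ together with two inputs already available: the norm estimate $\langle T(h),T(h)\rangle_{\am}\le\Vert T\Vert^2\langle h,h\rangle_{\am}$ from the Proposition above, which yields the upper bounds at once, and the elementary fact that a positive invertible element $a$ of a $C^*$-algebra satisfies $\Vert a^{-1}\Vert^{-1}1\le a$. For the latter, from $a^{-1}\le\Vert a^{-1}\Vert\,1$ (the spectrum of $\Vert a^{-1}\Vert\,1-a^{-1}$ lies in $[0,\infty)$) one conjugates by $a^{1/2}$ to obtain $1=a^{1/2}a^{-1}a^{1/2}\le\Vert a^{-1}\Vert\,a$, i.e.\ $\Vert a^{-1}\Vert^{-1}1\le a$.

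\textbf{Part (i).} Assume $T$ is injective with closed range. First I would check that $T^*T$ is injective: if $T^*Tx=0$ then $\langle Tx,Tx\rangle_{\am}=\langle T^*Tx,x\rangle_{\am}=0$, so $Tx=0$, and injectivity of $T$ forces $x=0$. Next, since $\mathrm{Im}(T)$ is closed, the Lemma of \cite{EL95} gives $\mathcal{H}=\mathrm{Ker}(T)\oplus\mathrm{Im}(T^*)=\mathrm{Im}(T^*)$ and $\mathcal{K}=\mathrm{Ker}(T^*)\oplus\mathrm{Im}(T)$. Consequently $\mathrm{Im}(T^*T)=T^*(\mathrm{Im}(T))=T^*(\mathcal{K})=\mathrm{Im}(T^*)=\mathcal{H}$, so $T^*T$ is also surjective, hence bijective; as a bijective adjointable operator it has a bounded adjointable inverse, so $T^*T$ is a positive invertible element of $End^*_{\mathcal{A}}(\mathcal{H})$. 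Applying the Proposition with $h=x$ gives $T^*T\le\Vert T\Vert^2\mathcal{I}_{\mathcal{H}}$, and applying the $C^*$-algebra fact with $a=T^*T$ gives $\Vert(T^*T)^{-1}\Vert^{-1}\mathcal{I}_{\mathcal{H}}\le T^*T$.

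\textbf{Part (ii).} Assume $T$ is surjective, so $\mathrm{Im}(T)=\mathcal{K}$ is closed. By the Lemma of \cite{EL95}, $T^*$ has closed range and $\mathcal{K}=\mathrm{Ker}(T^*)\oplus\mathrm{Im}(T)=\mathrm{Ker}(T^*)\oplus\mathcal{K}$, which forces $\mathrm{Ker}(T^*)=\{0\}$; thus $T^*$ is injective with closed range. Now apply Part (i) to $T^*$ (using $(T^*)^*=T$ and $\Vert T^*\Vert=\Vert T\Vert$): $TT^*=(T^*)^*T^*$ is positive and invertible and satisfies $\Vert(TT^*)^{-1}\Vert^{-1}\mathcal{I}_{\mathcal{K}}\le TT^*\le\Vert T\Vert^2\mathcal{I}_{\mathcal{K}}$. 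One could equally argue directly: by the Lemma of \cite{LA07}, surjectivity of $T$ makes $T^*$ bounded below in the inner product, so $\langle TT^*x,x\rangle_{\am}=\langle T^*x,T^*x\rangle_{\am}\ge m'\langle x,x\rangle_{\am}$ (whence injectivity of $TT^*$), while $\mathrm{Im}(TT^*)=T(\mathrm{Im}(T^*))=T(\mathcal{H})=\mathcal{K}$ gives surjectivity.

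\textbf{Main obstacle.} The only genuinely delicate points are the surjectivity of $T^*T$ (resp.\ $TT^*$), which is exactly where the closed-range hypothesis and the orthogonal-decomposition Lemma of \cite{EL95} enter, and the remark that a bijective adjointable map between Hilbert $C^*$-modules has a bounded adjointable inverse (open mapping theorem together with $(T^{-1})^*=(T^*)^{-1}$); everything else is the routine spectral estimate inside the $C^*$-algebra $End^*_{\mathcal{A}}(\mathcal{H})$. I expect the surjectivity step to need the most care in a complete write-up.
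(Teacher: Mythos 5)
The paper offers no proof of this lemma at all: it is imported verbatim from \cite{WP73} as a quoted preliminary, so there is nothing internal to compare your argument against. Your proposal is a correct, self-contained proof, and it is assembled entirely from the other preliminaries the paper does state: the estimate $\langle Th,Th\rangle_{\am}\leq \Vert T\Vert^{2}\langle h,h\rangle_{\am}$ for the upper bounds, the orthogonal decomposition $\mathcal{H}=\mathrm{Ker}(T)\oplus \mathrm{Im}(T^{*})$, $\mathcal{K}=\mathrm{Ker}(T^{*})\oplus \mathrm{Im}(T)$ from \cite{EL95} for surjectivity of $T^{*}T$, and the spectral inequality $\Vert a^{-1}\Vert^{-1}1\leq a$ for positive invertible $a$, whose conjugation proof you give correctly. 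Two small points are worth making explicit in a final write-up. First, the step $T^{*}(\mathrm{Im}(T))=T^{*}(\mathcal{K})$ deserves one line: any $y\in\mathcal{K}$ splits as $y_{1}+y_{2}$ with $y_{1}\in\mathrm{Ker}(T^{*})$ and $y_{2}\in\mathrm{Im}(T)$, so $T^{*}y=T^{*}y_{2}$. Second, you are in fact proving a corrected version of the statement: as printed, part (i) reads $\Vert (T^{*}T)^{-1}\Vert\,\mathcal{I}_{\mathcal{K}}\leq T^{*}T$, which is both dimensionally wrong (the identity should be $\mathcal{I}_{\mathcal{H}}$, since $T^{*}T$ acts on $\mathcal{H}$) and missing the exponent $-1$ on the norm, as a comparison with part (ii) confirms; your conclusion $\Vert (T^{*}T)^{-1}\Vert^{-1}\mathcal{I}_{\mathcal{H}}\leq T^{*}T$ is the intended one. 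The deduction of (ii) from (i) applied to $T^{*}$ (after extracting $\mathrm{Ker}(T^{*})=\{0\}$ from the decomposition of $\mathcal{K}$) is clean, and your alternative direct route via the lower bound of \cite{LA07} is equally valid.
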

 \begin{lem}$( See$ \cite{FY09}$)$ \label{lem: ortocomplope} Let $\h,\mathcal{K}$ and $\mathcal{G}$ be Hilbert $\am $-modules over a $C^*$-algebra $\am$. let $T\in End^*_{\mathcal{A}}(\mathcal{H},\mathcal{K})$ and $T' \in End^*_{\mathcal{A}}(\mathcal{G},\mathcal{K})$ with $\overline{ Im(T^*)}$ be orthogonally complemented. Then the following statements are equivalent:
 \begin{itemize}
 \item[i)] $ T'T'^*\leq \lambda TT^* $ for some $\lambda>0$;
 \item[ii)] There exists $\mu>0$ such that $\Vert T'^*z\Vert\leq \Vert \mu \Vert T^*z\Vert$, for all $z\in \mathcal{K}$.
 \item[iii)] $Im(T')\subset Im(T)$.
 \end{itemize}
 \end{lem}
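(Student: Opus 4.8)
The plan is to establish the cycle $(i)\Rightarrow(ii)\Rightarrow(iii)\Rightarrow(i)$, and it is convenient to keep in mind the auxiliary statement
\[
(iv)\qquad T'=TD\ \text{ for some }\ D\in End^{*}_{\mathcal A}(\mathcal G,\h),
\]
which trivially implies $(iii)$ and which also implies $(i)$ with $\lambda=\|D\|^{2}$, since $DD^{*}\le\|D\|^{2}\,\mathrm{Id}_{\mathcal G}$ and conjugation by $T$ preserves positivity. The implication $(i)\Rightarrow(ii)$ is immediate: pairing the operator inequality with $z\in\mathcal K$ gives, in $\am^{+}$,
\[
\langle T'^{*}z,T'^{*}z\rangle_{\am}=\langle T'T'^{*}z,z\rangle_{\am}\le\lambda\,\langle TT^{*}z,z\rangle_{\am}=\lambda\,\langle T^{*}z,T^{*}z\rangle_{\am},
\]
and since the $C^{*}$-norm is monotone on positive elements one gets $\|T'^{*}z\|^{2}\le\lambda\|T^{*}z\|^{2}$, so $(ii)$ holds with $\mu=\sqrt\lambda$.

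For $(ii)\Rightarrow(iii)$ I would run a Hilbert $C^{*}$-module version of Douglas' factorization lemma; this is the step where the hypothesis on $\overline{Im(T^{*})}$ is used. Since $Ker(T)=(\overline{Im(T^{*})})^{\perp}$ and $\overline{Im(T^{*})}$ is orthogonally complemented, $\h=Ker(T)\oplus\overline{Im(T^{*})}$ and $P:=\pi_{\overline{Im(T^{*})}}\in End^{*}_{\mathcal A}(\h)$. Estimate $(ii)$ shows that $C_{0}\colon Im(T^{*})\to\mathcal G$, $C_{0}(T^{*}z):=T'^{*}z$, is well defined (if $T^{*}z_{1}=T^{*}z_{2}$ then $\|T'^{*}z_{1}-T'^{*}z_{2}\|\le\mu\|T^{*}z_{1}-T^{*}z_{2}\|=0$), is $\am$-linear and bounded; extending it by continuity to $\overline{Im(T^{*})}$ and by $0$ across $Ker(T)$ yields a bounded $\am$-module map $C\colon\h\to\mathcal G$ with $CT^{*}=T'^{*}$. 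The decisive point is that $C$ is \emph{adjointable}; granting this, $(CT^{*})^{*}=TC^{*}=T'$, so $(iv)$ holds with $D=C^{*}$, and in particular $Im(T')\subseteq Im(T)$.

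For $(iii)\Rightarrow(i)$, set $\tilde T:=T|_{\overline{Im(T^{*})}}\in End^{*}_{\mathcal A}\big(\overline{Im(T^{*})},\mathcal K\big)$ (it is $T$ composed with the adjointable inclusion $\overline{Im(T^{*})}\hookrightarrow\h$), and note that $\tilde T$ is injective with $Im(\tilde T)=Im(T)\supseteq Im(T')$. Hence $D\colon\mathcal G\to\overline{Im(T^{*})}\subseteq\h$, $Dg:=\tilde T^{-1}(T'g)$, is a well-defined $\am$-linear map with $TD=T'$; its graph is closed (if $g_{n}\to g$ and $Dg_{n}\to h$, then $T'g_{n}=\tilde T(Dg_{n})\to\tilde T(h)$ while also $T'g_{n}\to T'g$, so $\tilde T(h)=T'g$ and $h=Dg$ by injectivity of $\tilde T$), so $D$ is bounded by the closed graph theorem. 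From $TD=T'$ one gets
\[
\|T'^{*}z\|=\sup_{\|g\|\le1}\|\langle T'^{*}z,g\rangle\|=\sup_{\|g\|\le1}\|\langle T^{*}z,Dg\rangle\|\le\|D\|\,\|T^{*}z\|,
\]
and using this estimate together with $P$ one builds $D^{*}\colon\h\to\mathcal G$ explicitly, as the continuous extension of $T^{*}z\mapsto T'^{*}z$ precomposed with $P$; a short computation $\langle Dg,h\rangle=\langle Dg,Ph\rangle=\langle g,D^{*}h\rangle$ then shows $D\in End^{*}_{\mathcal A}(\mathcal G,\h)$. Consequently $T'T'^{*}=TDD^{*}T^{*}\le\|D\|^{2}\,TT^{*}$, which is $(i)$.

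The main obstacle is the adjointability asserted in $(ii)\Rightarrow(iii)$ — the step ``$C$ is adjointable'', equivalently the implication $(ii)\Rightarrow(iv)$. Over a Hilbert space this is automatic, every bounded operator being adjointable; over a Hilbert $C^{*}$-module it fails in general, and it is exactly the orthogonal complementedness of $\overline{Im(T^{*})}$ that must be exploited to make the extension-by-zero legitimate and to identify the adjoint on the complemented summand. I would carry this out following \cite{FY09}, or deduce it from the closed-graph analysis of $\tilde T$ used above; the remaining verifications ($\am$-linearity, the various norm and order estimates) are routine.
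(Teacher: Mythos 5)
First, a remark on the comparison itself: the paper does not prove this lemma at all --- it is quoted from \cite{FY09} as a preliminary result --- so there is no in-paper argument to measure your proposal against, and it has to stand on its own. Within it, the implication $(i)\Rightarrow(ii)$ is correct, and your $(iii)\Rightarrow(i)$ is essentially complete and well organized: the decomposition $\h=Ker(T)\oplus\overline{Im(T^*)}$ (valid because $Ker(T)=(\overline{Im(T^*)})^{\perp}$ and the latter is assumed orthogonally complemented), the closed-graph boundedness of $D=\tilde T^{-1}T'$, and the identification of $D^{*}$ as the continuous extension of $T^{*}z\mapsto T'^{*}z$ annihilating $Ker(T)$ all check out; the verification $\langle Dg,h\rangle=\langle g,D^{*}h\rangle$ reduces to $h=T^{*}z$, where it is the identity $\langle TDg,z\rangle=\langle T'g,z\rangle$, plus continuity and orthogonality. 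This in fact yields the chain $(iii)\Rightarrow(iv)\Rightarrow(i)\Rightarrow(ii)$.

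The genuine gap is $(ii)\Rightarrow(iii)$, and it is not of the kind that ``the remaining verifications are routine'' can absorb. You correctly build the bounded $\am$-module map $C$ with $CT^{*}=T'^{*}$, but the step ``$C$ is adjointable'' is precisely the hard content of the theorem, and the naive route to $C^{*}$ is circular: for $g\in\mathcal G$ the defining requirement $\langle CT^{*}z,g\rangle=\langle z,T'g\rangle=\langle z,TC^{*}g\rangle$ for all $z$ forces $TC^{*}g=T'g$, i.e.\ $C^{*}g$ must be a $T$-preimage of $T'g$ --- which presupposes exactly the range inclusion $Im(T')\subset Im(T)$ you are trying to establish. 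Your fallback, to ``deduce it from the closed-graph analysis of $\tilde T$ used above,'' does not help either, since that analysis takes $(iii)$ as its input. So as written the cycle does not close: you have $(iii)\Rightarrow(i)\Rightarrow(ii)$ and no way back from $(ii)$. To finish, the actual argument of \cite{FY09} for the majorization-to-factorization direction must be imported (it cannot be replaced by the raw extension-by-zero construction), or the implication $(ii)\Rightarrow(iii)$ should be explicitly attributed to the reference rather than claimed as proved. Everything else in the proposal is sound.
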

 \begin{pro}$( See$ \cite{XS08}$)$
 Let $M$ be an orthogonally complemented submodules of a Hilbert $A$-module $\h$ and let $T\in End^*_{\am}(\h)$ be an invertible operator  such that  $T^*TM\subset M$. Then we have: \begin{align}
 T(M^{\perp})=(TM)^{\perp},\quad \pi_{TM}=T\pi_{M} T^{-1}.
\end{align}  
 \end{pro}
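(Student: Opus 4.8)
The plan is to establish the two assertions in order: first $T(M^{\perp})=(TM)^{\perp}$ (which in particular will force $TM$ to be orthogonally complemented, so that $\pi_{TM}$ even makes sense), and then to check directly that $T\pi_{M}T^{-1}$ is the orthogonal projection onto $TM$.

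I would begin with a few routine preliminaries: since $T$ is invertible in $End^*_{\am}(\h)$, its inverse $T^{-1}$ is again adjointable, with $(T^{-1})^{*}=(T^{*})^{-1}$, and $T$ is a homeomorphism of $\h$; in particular $TM$ is a closed submodule. The inclusion $T(M^{\perp})\subseteq(TM)^{\perp}$ is then immediate, and it is the only place the hypothesis enters: for $x\in M^{\perp}$ and $m\in M$ one has $\langle Tx,Tm\rangle_{\am}=\langle x,T^{*}Tm\rangle_{\am}=0$, because $T^{*}Tm\in M$.

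For the reverse inclusion I would apply $T$ to the decomposition $\h=M\oplus M^{\perp}$: since $T$ is bijective, $\h=T\h=TM+T(M^{\perp})$ and $TM\cap T(M^{\perp})=T(M\cap M^{\perp})=\{0\}$, so the sum is direct. Given $w\in(TM)^{\perp}$, write $w=u+v$ with $u\in TM$ and $v\in T(M^{\perp})\subseteq(TM)^{\perp}$; then $u=w-v\in TM\cap(TM)^{\perp}$, so $\langle u,u\rangle_{\am}=0$, i.e. $u=0$ and $w=v\in T(M^{\perp})$. This yields $(TM)^{\perp}=T(M^{\perp})$, and combined with $\h=TM+T(M^{\perp})$ it shows $\h=TM\oplus(TM)^{\perp}$, so $TM$ is orthogonally complemented and $\pi_{TM}$ is well defined.

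Finally, I would set $P:=T\pi_{M}T^{-1}\in End^*_{\am}(\h)$ and verify it equals $\pi_{TM}$ on each summand: $P^{2}=T\pi_{M}(T^{-1}T)\pi_{M}T^{-1}=T\pi_{M}^{2}T^{-1}=P$; for $m\in M$, $P(Tm)=T\pi_{M}m=Tm$, so $P$ is the identity on $TM$; and for $x\in M^{\perp}$, $P(Tx)=T\pi_{M}x=0$, so $P$ vanishes on $(TM)^{\perp}=T(M^{\perp})$. Writing an arbitrary $h\in\h$ as $h=Tm+Tx$ with $m\in M$, $x\in M^{\perp}$, we get $Ph=Tm=\pi_{TM}h$, hence $P=\pi_{TM}$. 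The only step requiring genuine care is the reverse inclusion $(TM)^{\perp}\subseteq T(M^{\perp})$ together with the orthogonal complementedness of $TM$: one cannot invoke a Hilbert-space orthogonal projection directly, but must derive the decomposition of $\h$ from bijectivity of $T$ and the given decomposition $\h=M\oplus M^{\perp}$; everything else (adjointability of $T^{-1}$, closedness of $TM$, the projection identities) is bookkeeping.
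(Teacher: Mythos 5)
Your argument is correct and complete. Note that the paper itself offers no proof of this proposition: it is quoted from the reference [XS08] (Xu--Sheng) as a known fact, so there is nothing internal to compare against. Your write-up supplies exactly what such a proof needs: the hypothesis $T^{*}TM\subset M$ is used only for the inclusion $T(M^{\perp})\subseteq(TM)^{\perp}$ via $\langle Tx,Tm\rangle_{\am}=\langle x,T^{*}Tm\rangle_{\am}=0$; the reverse inclusion and the orthogonal complementedness of $TM$ are correctly extracted from bijectivity of $T$ applied to $\h=M\oplus M^{\perp}$, using positive definiteness of the $\am$-valued inner product to kill the component $u\in TM\cap(TM)^{\perp}$; and the identification $T\pi_{M}T^{-1}=\pi_{TM}$ follows by checking the two maps agree on the common decomposition $\h=TM\oplus T(M^{\perp})$. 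You are also right to flag that one cannot simply invoke Hilbert-space projection machinery here, since closed submodules of a Hilbert $C^{*}$-module need not be complemented; deriving $\h=TM\oplus(TM)^{\perp}$ from the given splitting of $M$ is the step that makes the statement nontrivial in this setting, and you have handled it properly.
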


 It has been proven that an operator between two Hilbert $C^*$-module admits a Moore-Penrose inverse if and only if it has closed range $( See$ \cite{KS10} $)$
 \begin{defi} Let $T\in End^*_{\mathcal{A}}(\mathcal{H},\mathcal{K})$ be a closed range, then  there exists an adjointable operator $T^\dagger\in End^*_{\mathcal{A}}(\mathcal{K},\mathcal{H})$  called the Moore-Penrose inverse of $T$ if \begin{align}\label{lem: pseudo}
 TT^{\dagger}T=T,\quad T^{\dagger}TT^{\dagger}=T^{\dagger},\quad (TT^{\dagger})^*=TT^{\dagger},\, and\, (T^{\dagger}T)^*=T^{\dagger}T.
 \end{align}

 \end{defi}
 \subsection*{Fusion frame} 
 \begin{defi}\cite{KH08}
 Let $\h$ be an Hilbert $C^*$-module over $\mathcal{A}$ and $\{w_i\}_{i\in I}$ be a family of weights in $\h$, i.e., each $w_i$ is a positive invertible element from the center of the $C^*$-algebra $\mathcal{A}$. A family of closed submodules $\mathcal{W}=\{W_i,w_i\}_{i\in I}$ is a fusion frame for $\h $ if every $W_i$ is orthogonally complemented and there exist real constants $ 0<A\leq B<\infty $ such that  \begin{equation}\label{def: kf}
A \langle f,f\rangle_{\am}\leq\sum_{i\in I} w_i^2 \langle\pi_{W_i}f,\pi_{W_i} f \rangle_{\am} \leq B \langle f, f\rangle_{\am}, f \in\mathcal{H}.
\end{equation}
 \end{defi}
 We call $A$ and $B$ the lower and upper bounds of the fusion frame. the fusion frame $\{W_i,w_i\}_{i\in I}$ is said to be $B$-tight if $A=B$, and said to be Perrseval if $A=B=1$. If the right -handed inequality of holds, then we say that $\{W_i,w_i\}_{i\in I}$ is a Bessel fusion sequence for $\mathcal{H}$ with Bessel fusion bound $B$.\\
 
 Let $\{W_i,w_i\}_{i\in I}$ be a fusion frame for $\h$, then the fusion operator $ S:\quad \h\longrightarrow \h$ defined by \begin{equation}\label{def: fos}
Sx=\sum_{i\in I} w_i^2 \pi_{W_i}(x),\quad x\in \h,
 \end{equation}
 is a positive, self-adjoint and invertible operator. These properties imply that $\{W_i,w_i\}_{i\in I}$ admits the reconstruction formular \begin{equation*}
 x=\sum_{i\in I} w_i^2 S^{-1}\pi_{W_i}(x)=\sum_{i\in I} w_i^2 \pi_{W_i}(S^{-1}x),\quad x\in \h.
 \end{equation*}
 $($Throughout the paper, the sums like those in the middles of (\ref{def: fos}) and (\ref{def: kf}) are assumed to be convergent
in the norm sense.)
\subsection{$*$-$K$-fusion Frame for Hilbert $C^*$-modules}
\begin{defi}
Let $K\in End^*_{\mathcal{A}}(\mathcal{H})$, and $\{W_i\}_{i\in I}$ be a family of orthogonally complemented submodules of Hilbert $\am$-module $\mathcal{H}$. Also, let $\{w_i\}_{i\in I}$ be a family of weights i.e.,, each $w_j>0$ is a positive, invertible element from the center of $\am$. Then  $\mathcal{W}=\{W_i,w_i\}_{i\in I}$ is called a $*$-$K$-fusion frame if there exist two strictly nonzero elements $A$ and $B$ in $\am$ such that 
 \begin{equation} \label{def: *ff}
A \langle K^*f,  K^*f\rangle_{\am} A^*\leq\sum_{i\in I} w_i^2\langle\pi_{W_i}f,\pi_{W_i} f\rangle_{\am}\leq B \langle f,f\rangle_{\am} B^* , 
\end{equation}
For all $f \in\mathcal{H}$.\\
 \end{defi}

 Let $\{W_i,w_i\}_{i\in I}$ be a $*$-$K$-fusion frame for $\h$, with bounds $A$ and $B$. Then, the corresponding fusion frame synthesis operator $T_{W, v}: \h \rightarrow \ell_{2}(I, \h)$ defined by $T_{W, v}(f)=\left(v_{i} \pi_{W_{i}}(f)\right)_{i \in I}$ for $f \in \h$, and its adjoint operator $T_{W, v}^{*}: \ell_{2}(I, \h) \rightarrow \h$ is  defined by $T_{M, v}^{*}(y)=\sum_{i \in I} v_{i} \pi_{W_{i}}\left(y_{i}\right)$ for all $g=\left(g_{i}\right)_{i \in I}$ in $\ell_{2}(I, X).$ Then the $*$-fusion frame operator $ S:\quad \h\longrightarrow \h$ defined by \begin{equation}\label{def: etoilefos}
Sf=\sum_{i\in I} w_i^2 \pi_{W_i}(f),\quad f\in \h,
 \end{equation}
  It is clear that $S$ is well defined, positive, bounded and self-adjoint  operator.
\subsection{Controlled $*$-fusion Frame for Hilbert $C^*$-modules}
\begin{defi}
Let $\{W_i\}_{i\in I}$ be a family of orthogonally complemented submodules of Hilbert $\am$-module $\mathcal{H}$. Also, let $\{w_i\}_{i\in I}$ be a family of weights i.e., each $w_j>0$ is a positive, invertible element from the center of $\am$, and let $C, C'\in GL^+(\mathcal{H})$. Then  $\mathcal{W}=\{W_i,w_i\}_{i\in I}$ is called a $*$-fusion frame controlled by $C$ and $C'$ or $(C,C')$-controlled $*$-fusion frame if there exist two strictly nonzero elements $A$ and $B$ in $\am$ such that 
 \begin{equation} \label{def: c*ff}
A \langle f,  f\rangle_{\am} A^*\leq\sum_{i\in I} w_i^2\langle\pi_{W_i}C f,\pi_{W_i}C' f\rangle_{\am}\leq B \langle f,f\rangle_{\am} B^* , 
\end{equation}
For all $f \in\mathcal{H}$.
\end{defi}
Let  $\mathcal{W}$ is a $(C,C')$-controlled $*$-fusion frame and  $C'^*\pi_{W_i}C$ is a positive operator for each $i\in I$, then $C'^*\pi_{W_i}C=C^*\pi_{W_i} C'$.\\

  \begin{eqnarray*}
  \sum_{i\in I} w_i^2 \langle \pi_{W_i}C f,\pi_{W_i}C' f\rangle _{\am}&=& \sum_{i\in I} w_i^2  \langle C'^*\pi_{W_i}C f, f\rangle_{\am}\\
  &=& \sum_{i\in I} w_i^2 \langle (C'^*\pi_{W_i}C)^{\frac{1}{2}}f, (C'^*\pi_{W_i}C)^{\frac{1}{2}}f \rangle_{\am}.\\
  \end{eqnarray*}
 We define the controlled analysis operator by 
 \begin{eqnarray*}
  T:\quad \mathcal{H} &\longrightarrow & \mathcal{K}\\ f &\longmapsto &T(f):=(w_i (C'^*\pi_{W_i}C)^{\frac{1}{2}}f)_{i\in I},
\end{eqnarray*}
Where\begin{align*}
 \mathcal{K}=\{(w_i (C'^*\pi_{W_i}C)^{\frac{1}{2}}f)_{i\in I}  \vert f\in \mathcal{H}\}\subseteq (\oplus_{i\in I}\mathcal{H})_{l^2}.
 \end{align*}
 $\mathcal{K}$ is closed and $ T$ is well defined. Morever $T$ is a bounded linear operator. Its adjoint operator is given by   \begin{eqnarray*} \label{def: adjoint}
  T^*:\quad \mathcal{K} &\longrightarrow & \mathcal{H}\\(w_i (C'^*\pi_{W_i}C)^{\frac{1}{2}}f)_{i\in I}) &\longmapsto &T^*((w_i (C'^*\pi_{W_i}C)^{\frac{1}{2}}f)_{i\in I}):=\sum_{i\in I} w_i^2C'^*\pi_{W_i}C f,
\end{eqnarray*}
and it is called the controlled synthesis operator.\\
Therefore, we define the controlled  $*$-fusion frame operator $S$ on $\mathcal{H}$ by\begin{align}\label{def:opers}
S_{CC'}=T^*T(f)=\sum_{i\in I} w_i^2 C'^*\pi_{W_i}C f, \quad  f\in\mathcal{H}.
\end{align}

As for frames, $\{W_i,w_i\}_{i\in I}$ is a fusion sequence for $\h$ if and only if the synthesis operator is a well defined, bounded linear operator. 
\begin{thm}\label{th: ckfsoperator} $\mathcal{W}$ is a $(C,C')$-controlled  $*$-fusion Bessel sequence for $\h$ with bound $B$ if and only if $T^*$ is well-defined bounded operator and $\Vert T^*\Vert \leq \Vert B\Vert$.
\end{thm}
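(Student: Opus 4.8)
The plan is to use the computation recorded just before the statement, namely that the middle term of (\ref{def: c*ff}) is
\[
\sum_{i\in I} w_i^2\langle\pi_{W_i}Cf,\pi_{W_i}C'f\rangle_{\am}=\langle Tf,Tf\rangle
\]
for every $f\in\h$, the inner product on the right being that of $(\oplus_{i\in I}\h)_{l^2}$. Hence ``$\mathcal{W}$ is a $(C,C')$-controlled $*$-fusion Bessel sequence with bound $B$'' means precisely $\langle Tf,Tf\rangle\le B\langle f,f\rangle_{\am}B^*$ for all $f$, and the theorem reduces to translating this estimate into boundedness of $T$ (equivalently of $T^*$) with operator norm controlled by $\|B\|$.

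$(\Rightarrow)$ Assume the Bessel inequality holds with bound $B$. The finite partial sums of $\sum_i w_i^2\langle(C'^*\pi_{W_i}C)^{\frac{1}{2}}f,(C'^*\pi_{W_i}C)^{\frac{1}{2}}f\rangle_{\am}$ form an increasing net in $\am^{+}$ dominated by $B\langle f,f\rangle_{\am}B^*$, so under the paper's standing convention that such sums converge in norm we get $Tf\in\mathcal{K}$ and $T$ is well defined. Taking norms in the Bessel inequality and using $\|B\langle f,f\rangle_{\am}B^*\|=\|B\langle f,f\rangle_{\am}^{\frac{1}{2}}\|^{2}\le\|B\|^{2}\|f\|^{2}$ gives $\|Tf\|^{2}=\|\langle Tf,Tf\rangle\|\le\|B\|^{2}\|f\|^{2}$, hence $T$ is bounded with $\|T\|\le\|B\|$. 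A Cauchy--Schwarz estimate in the module then shows that $\sum_i w_i^2 C'^*\pi_{W_i}Cf$ converges and defines the adjoint of $T$, so $T^*$ is a well-defined bounded operator with $\|T^*\|=\|T\|\le\|B\|$.

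$(\Leftarrow)$ Conversely, if $T^*$ is a well-defined bounded operator with $\|T^*\|\le\|B\|$, then $T=(T^*)^*$ is well defined and bounded with $\|T\|=\|T^*\|\le\|B\|$, so the Proposition quoted above (see \cite{NW93}) applied to $T$ yields
\[
\sum_{i\in I} w_i^2\langle\pi_{W_i}Cf,\pi_{W_i}C'f\rangle_{\am}=\langle Tf,Tf\rangle\le\|T\|^{2}\langle f,f\rangle_{\am}\le\|B\|^{2}\langle f,f\rangle_{\am}
\]
for all $f\in\h$. Since $\|B\|^{2}\langle f,f\rangle_{\am}=(\|B\|1_{\am})\langle f,f\rangle_{\am}(\|B\|1_{\am})^{*}$, this is the Bessel inequality with a bound of norm at most $\|B\|$, which is the meaning of ``bound $B$'' here.

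I expect the only genuine work to be the $C^*$-module bookkeeping: justifying that the order-bounded increasing net of partial sums defining $Tf$ (and the series defining $T^*g$) converges in norm, and verifying the adjoint relation $\langle T^*g,f\rangle_{\am}=\langle g,Tf\rangle_{\am}$ so that $T$ and $T^*$ are genuinely mutual adjoints; the norm estimates themselves are immediate from the Proposition and the $C^*$-identity $\|aa^*\|=\|a\|^{2}$.
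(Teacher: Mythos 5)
Your proposal is correct and follows essentially the same route as the paper: both directions rest on identifying the middle sum with $\langle Tf,Tf\rangle$, a Cauchy--Schwarz estimate to obtain norm convergence of the synthesis series, and the bound $\langle Tf,Tf\rangle\le\Vert T\Vert^{2}\langle f,f\rangle_{\am}$ for the converse (with the same loose reading of ``bound $B$'' as $\Vert B\Vert 1_{\am}$ that the paper itself uses). The only cosmetic difference is that you construct the analysis operator $T$ first and pass to its adjoint, whereas the paper verifies the Cauchy criterion for the series defining $T^{*}$ directly; the underlying estimates are identical.
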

\begin{proof} Assume that  $\{W_i,w_i\}_{i\in I}$ is a $(C,C')$-controlled  $*$-fusion Bessel sequence for $\h$ with bound $B$. Let $(w_i (C'^*\pi_{W_i}C)^{\frac{1}{2}}f)_{i\in I}\in \mathcal{K}$. First, we want to show that $T^*$ is well-defined, ie., that the serie $\sum_{i\in I} w_i^2C'^*\pi_{W_i}C f$ is convergent. Consider $n,m \in I$, for $n>m$. Then, we have 
\begin{eqnarray*}
\Vert \sum_{i=m+1}^{i=n}w_i^2(C'^* \pi_{W_i}C) f\Vert ^2 &=&\sup_{g\in \h, \Vert g\Vert=1}\Vert \langle \sum_{i=m+1}^{i=n} w_i^2(C'^* \pi_{W_i}Cf,g \rangle_{\am}\Vert^2\\
&=& \sup_{g\in \h, \Vert g\Vert=1}\Vert \sum_{i=m+1}^{i=n}\langle w_i(C'^* \pi_{W_i}C)^{\frac{1}{2}}f,w_i(C'^* \pi_{W_i}C)^{\frac{1}{2}}g \rangle_{\am}\Vert^2\\
&\leq&  \sup_{f\in \h, \Vert f\Vert=1}\Vert  \sum_{i=m+1}^{i=n}\langle  w_i(C'^* \pi_{W_i}C)^{\frac{1}{2}}f, w_i(C'^* \pi_{W_i}C)^{\frac{1}{2}}f\rangle \Vert^2\\&& \Vert  \sum_{i=m+1}^{i=n}\langle w_i (C'^*\pi_{W_i}C)^{\frac{1}{2}}g,w_i (C'^*\pi_{W_i}C)^{\frac{1}{2}}g\rangle_{\am} \Vert^2\\
&=& \sup_{f\in \h, \Vert f\Vert=1}\Vert  \sum_{i=m+1}^{i=n}  w_i^2 \langle \pi_{W_i}Cf, \pi_{W_i}C'f\rangle_{\am} \Vert^2 \\&&  \Vert  \sum_{i=m+1}^{i=n}  w_i^2\langle  w_i(C'^* \pi_{W_i}C)^{\frac{1}{2}}g,w_i(C'^* \pi_{W_i}C)^{\frac{1}{2}}g\rangle \Vert^2\\
&\leq &\Vert B\Vert^2 \Vert \{w_i (C'^*\pi_{W_i}C)^{\frac{1}{2}} f\}_{i\in I} \Vert_{l^2(\h)}.
\end{eqnarray*}
Since the right hand side tends to zero as $n\longrightarrow +\infty$, then the sequence satisfies the cauchy criterion and also converges. So the operator $ T^*$ is well-defined.

For the opposite imlication, suppose that $ T^*$ is well-defined and that $\Vert T^* \Vert \leq \Vert B\Vert$. Let us prove that the system $\{W_i,w\}_{i\in I}$ is a  $(C,C')$-controlled   $*$-fusion Bessel sequence in $\h$ with a bound $B$. For arbitrary  $f\in\h$ and $(w_i (C'^*\pi_{W_i}C)^{\frac{1}{2}}f)_{i\in I}\in \mathcal{K}$, we have 
\begin{eqnarray*}
\sum_{i\in I} \langle \pi_{W_i}C f,\pi_{W_i}C' f \rangle_{\am} &=& \sum_{i\in I}\langle w_i^2 (C'^*\pi_{W_i}C)f ,f\rangle_{\am}\\
&=& \langle \{w_i ( C'^*\pi_{W_i}C)^{\frac{1}{2}}\}_{i\in I}f,\{w_i (C'^*\pi_{W_i}C)^{\frac{1}{2}}\}_{i\in I}f \rangle_{l^2(\h)}\\
&\leq &  \Vert \{w_i (C'^*\pi_{W_i}C)^{\frac{1}{2}}\}_{i\in I}\Vert^2 \langle f,f\rangle_{l^2(\h)}\\
&\leq &  \Vert B\Vert \langle f,f\rangle_{\am}.
\end{eqnarray*}
\end{proof}

\begin{thm}\textit{(Reconstruction formula)}\label{th:reconstructionformula1} Let $K\in End^*_{\mathcal{A}}(\mathcal{H})$ be a surjective operator, if $\{W_i,w_i\}_{i\in I}$ is a $(C,C')$-controlled $*$-fusion frame for $\h$. Then, the controlled $*$-fusion  frame operator $S_{CC'}$ is positive, invertible and adjointable. Morever, we have the reconstruction formula $f=\sum_{i\in I} w_i^2 C'^{*}\pi_{W_i}C S_{CC'}^{-1}f,\quad f\in \h,$
\end{thm}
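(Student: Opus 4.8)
The plan is to identify the controlled $*$-fusion frame operator $S_{CC'}$ with $T^{*}T$, where $T$ is the controlled analysis operator of (\ref{def:opers}), and then transfer the relevant analytic properties from $T$ to $S_{CC'}$.

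First I would settle well-definedness, boundedness, adjointability and positivity. Since a $(C,C')$-controlled $*$-fusion frame is in particular a $(C,C')$-controlled $*$-fusion Bessel sequence with bound $B$, Theorem \ref{th: ckfsoperator} shows that $T^{*}$, and hence also $T$, is a well-defined bounded adjointable operator; in particular the series $S_{CC'}f=\sum_{i\in I}w_i^{2}C'^{*}\pi_{W_i}Cf$ converges in norm for every $f\in\mathcal H$ and $S_{CC'}=T^{*}T\in End^{*}_{\am}(\mathcal H)$. Then $S_{CC'}^{*}=(T^{*}T)^{*}=T^{*}T=S_{CC'}$ and $\langle S_{CC'}f,f\rangle_{\am}=\langle Tf,Tf\rangle_{\am}\ge 0$, so $S_{CC'}$ is self-adjoint and positive.

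Next comes invertibility, which is the main point. From the lower frame inequality one has, for all $f\in\mathcal H$, an estimate of the form $A\langle K^{*}f,K^{*}f\rangle_{\am}A^{*}\le\langle S_{CC'}f,f\rangle_{\am}$ (reducing to $A\langle f,f\rangle_{\am}A^{*}\le\langle S_{CC'}f,f\rangle_{\am}$ when $K=\mathrm{Id}$). Taking norms, using monotonicity of the norm on positive elements, the $C^{*}$-identity $\|aa^{*}\|=\|a\|^{2}$ (applied to $a=A\langle K^{*}f,K^{*}f\rangle_{\am}^{1/2}$), and the invertibility of the strictly nonzero element $A$, I obtain $\|A^{-1}\|^{-2}\|K^{*}f\|^{2}\le\|\langle S_{CC'}f,f\rangle_{\am}\|=\|Tf\|^{2}$. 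Since $K$ is surjective, the equivalence between surjectivity of $K$ and the bounded-below property of $K^{*}$ (the lemma quoted from \cite{LA07}) gives $m>0$ with $\|K^{*}f\|\ge m\|f\|$, whence $\|Tf\|\ge m\|A^{-1}\|^{-1}\|f\|$ for all $f$. Thus $T$ is injective and has closed range, and Lemma \ref{lem: operatinverse}(i) yields that $S_{CC'}=T^{*}T$ is invertible; being positive and invertible, it and its inverse $S_{CC'}^{-1}$ are adjointable and positive.

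Finally the reconstruction formula is immediate: given $f\in\mathcal H$, set $g=S_{CC'}^{-1}f\in\mathcal H$ and apply $S_{CC'}$ to get $f=S_{CC'}g=\sum_{i\in I}w_i^{2}C'^{*}\pi_{W_i}C S_{CC'}^{-1}f$, the series converging in norm by the first step (symmetrically, $f=S_{CC'}^{-1}S_{CC'}f=\sum_{i\in I}w_i^{2}S_{CC'}^{-1}C'^{*}\pi_{W_i}Cf$). The only genuinely delicate step is the third paragraph above, namely extracting the scalar lower bound $\|Tf\|\ge\mathrm{const}\cdot\|f\|$ from the $\am$-valued frame inequality: this is where one must use that ``strictly nonzero'' means $A$ is invertible, invoke the $C^{*}$-identity, and use surjectivity of $K$ to bound $\|K^{*}f\|$ from below. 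Everything else is a routine transcription of the classical Hilbert-space argument, with Theorem \ref{th: ckfsoperator} and Lemma \ref{lem: operatinverse} supplying the Hilbert $C^{*}$-module input.
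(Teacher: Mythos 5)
Your proposal is correct and follows essentially the same route as the paper: identify $S_{CC'}=T^{*}T$ via Theorem \ref{th: ckfsoperator}, show $T$ is injective with closed range using the lower frame bound, and conclude invertibility from Lemma \ref{lem: operatinverse}. The only cosmetic difference is that you extract the explicit bound $\Vert Tf\Vert\geq m\Vert A^{-1}\Vert^{-1}\Vert f\Vert$ directly, whereas the paper packages the same inequality as a Cauchy-sequence argument for closedness of the range (and leaves injectivity as ``easily shown''); your version actually makes that step explicit.
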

\begin{proof} By using the definition of $(C,C')$-controlled $*$-fusion frame and Lemma (\ref{th: ckfsoperator}). Then, it is easy to check that the controlled $*$-fusion  frame operator  is well defined, positive and adjointable. Morever, we have
\begin{align}\label{proof: norm}
\Vert Tf\Vert^2=\left\|\left\{ v_i C'^* \pi_{V_{i}} C f\right\}_{i \in I}\right\|_{l^2(\h)}= \Vert sum_{i\in I} w_i^2\langle\pi_{W_i}C f,\pi_{W_i}C' f\rangle_{\am}\Vert \leq \Vert B\Vert^2 \Vert\langle f,f\rangle_{\am} \Vert^2.
\end{align} 
Now, we want to show that the range of $T$ is closed. Let $\{Tf_n\}_{n\in\mathbb{N}}$ be a sequence in the range of $T$ such that $\lim _{n \rightarrow \infty} T f_{n}=g.$ For $n, m \in \mathbb{N}$, we have
$$
\left\|A_{1}\left\langle f_{n}-f_{m}, f_{n}-f_{m}\right\rangle A_{1}^{*}\right\| \leq\left\|\left\langle T\left(f_{n}-f_{m}\right), T\left(f_{n}-f_{m}\right)\right\rangle\right\|=\left\|T\left(f_{n}-f_{m}\right)\right\|^{2}.
$$
Since, $\left\{T f_{n}\right\}_{n \in \mathbb{N}}$ is a Cauchy sequence in $\mathcal{K}$,
$$
\left\|A_{1}\left\langle f_{n}-f_{m}, f_{n}-f_{m}\right\rangle A_{1}^{*}\right\| \rightarrow 0, \quad \text { as } n, m \rightarrow \infty,
$$
for $n, m \in \mathbb{N}$, we have
$$
\begin{aligned}
\left\|\left\langle f_{n}-f_{m}, f_{n}-f_{m}\right\rangle\right\| &=\left\|A_{1}^{-1} A_{1}\left\langle f_{n}-f_{m}, f_{n}-f_{m}\right\rangle A_{1}^{*}\left(A_{1}^{*}\right)^{-1}\right\| \\
& \leq\left\|A_{1}^{-1}\right\|^{2}\left\|A_{1}\left\langle f_{n}-f_{m}, f_{n}-f_{m}\right\rangle A_{1}^{*}\right\|.
\end{aligned}
$$
This means that $\left\{f_{n}\right\}_{n \in \mathbb{N}}$ is a Cauchy sequence, hence there exists $f \in \h$ such that $f_{n} \rightarrow f$ by letting $n \rightarrow \infty .$ Now by (\ref{def: adjoint}) we have $\left\|T\left(f_{n}-f\right)\right\|^{2} \leq\|B\|^{2}\left\|\left\langle f_{n}-f, f_{n}-f\right\rangle\right\| .$ Thus $\left\|T f_{n}-T f\right\| \rightarrow 0$ as $n \rightarrow \infty$ implies that $Tf=g .$ It concludes that the range of $T$ is closed. Hence one easily shows that $T$ is injective. Using the injectivity of $T$, the operator $T^{*}$ has a closed range and $\mathcal{H}=\operatorname{range}\left(T^{*}\right)$, since $S_{CC'}=T^{*} T$, then  the invertibility of $S_{CC'}$ can be derived by Lemma (\ref{lem: operatinverse}).  

\end{proof}

\section{Controlled $*$-$K$-Fusion Frame for Hilbert $C^*$-modules}
Controlled $*$-$K$-Fusion Frames are $C^*$-algebra version of Controlled $K$-Fusion Frames. In fact, we need strictly positive elements of  $C^*$-algebra $\am$ instead of positive real numbers.
\begin{defi}
Let $K\in End^*_{\mathcal{A}}(\mathcal{H})$, and $\{W_i\}_{i\in I}$ be a family of orthogonally complemented submodules of Hilbert $\am$-module $\mathcal{H}$. Also, let $\{w_i\}_{i\in I}$ be a family of weights in $\am$, i.e., each $w_j>0$ is a positive, invertible element from the center of $\am$ and let $C, C'\in GL^+(\mathcal{H})$. Then  $\mathcal{W}=\{W_i,w_i\}_{i\in I}$ is called a $*$-$K$- fusion frame controlled by $C$ and $C'$ or $(C,C')$-controlled $*$-$K$-fusion frame if there exist two strictly nonzero elements $A$ and $B$ in $\am$ such that 
 \begin{equation} \label{def: c*kff}
A \langle K^* f, K^* f\rangle_{\am} A^*\leq\sum_{i\in I} w_i^2\langle\pi_{W_i}C f,\pi_{W_i}C' f\rangle_{\am} \leq B \langle f,f\rangle_{\am} B^*, 
\end{equation}
For all $f \in\mathcal{H}$.\\
Where $K^*$ is the adjoint operator of $K$. 
\end{defi}
$A$ and $B$ are called lower and upper bounds of a $(C,C')$-controlled $*$-$K$-fusion frame respectively. Since $\am$ is not a partial ordered set lower and upper bounds may not exist
\begin{enumerate}
\item We call $\mathcal{W}$ a $(C,C')$-controlled Parsval  $*$-$K$-fusion frame if \\$A_{CC'}=B_{CC'}=1$.
\item If only the second inequality (\ref{def: c*kff}) is required, we call $\mathcal{W}$ a $(C,C')$-controlled  Bessel $*$-$K$-fusion sequence with Bessel bound $B$.
\end{enumerate}

We mentioned that the set of all of controlled $K$-fusion frames for $End^*_{\am}(\h)$ can be considered as a subset of controlled $*$-$K$-fusion frame. To illustrate this, let $\{W_i,w_i\}_{i\in I}$ be a controlled $K$-fusion frames for Hilbert $\am$-module $\h$ with  real bounds $A$ and $B$. Note that for $f\in \h$,\begin{equation*}
\sqrt{A}1_{\am} \langle K^* f, K^* f\rangle (\sqrt{A}1_{\am})^*\leq\sum_{i\in I} w_i<\pi_{W_i}C f,\pi_{W_i}C' f>  w_i^*\leq \sqrt{A}1_{\am} \langle f,f\rangle(\sqrt{A}1_{\am})^*. 
\end{equation*}
Therefore, every controlled $K$-fusion frames $\h$ with real bounds $A$ and $B$ is a controlled $*$-$K$-fusion frame for $\h$ with $\am$-valued controlled $*$-$K$-fusion frame bounds $(\sqrt{A} 1_{\am})$ and $(\sqrt{B} 1_{\am})$.

The following example shows that in general, frames may be controlled $*$-$K$-fusion frame  without being a controlled $*$-fusion frame in Hilbert $\am$-module. 
\begin{exa} Let \begin{align*}
\mathcal{H}= l^2(\mathbb{N})=\{ \{a_n\}_{n\in \mathbb{N}}\subset \mathbb{C}: \quad \sum_{n\in \mathbb{N}}\vert a_n\vert^2\leq \infty\}, \quad and
\end{align*} 
\begin{align*}
\am=l^\infty(\mathbb{N})=\{ \{a_n\}_{n\in \mathbb{N}}\subset \mathbb{C}: \quad \sup_{n\in \mathbb{N}}\vert a_n\vert^2\leq \infty\}.
\end{align*}
It is known that $\am$ is a commutative, unitale $C^*$-algebra and $\h$ is a Hilbert $\am$-module.
\begin{align*}
W_i=\{ \{a_n\}_{n\in \mathbb{N}}\subset \mathbb{C}: \quad a_j=0\quad \forall j\neq 2i\}.
\end{align*}
\begin{align*}
 \{W_n\}_{n\in \mathbb{N}}\subset l^2(\mathbb{N})\cap \mathcal{\am}, \quad \sum_{n\in \mathbb{N}}\vert a_n\vert^2\leq \infty
\end{align*}
Let $C$ and $C'$ be two operators, respectively defined as follows:
\begin{eqnarray*}
C:\quad \h &\longrightarrow& \h\\
\{a_n\}_{n\in \mathbb{N}}&\longmapsto&\alpha \{a_n\}_{n\in \mathbb{N}},
\end{eqnarray*}
\begin{eqnarray*}
C':\quad \h &\longrightarrow& \h\\
\{a_n\}_{n\in \mathbb{N}}&\longmapsto&\beta\{a_n\}_{n\in \mathbb{N}},
\end{eqnarray*}
where $\alpha$ and $\beta $ are two real numbers strictly positive.\\
It is clear that  $C,C'\in GL^+(\h)$. Indeed, for each $\{a_n\}_{n\in \mathbb{N}}\in\h$, one has
\begin{eqnarray*}
C^{-1}(\{a_n\}_{n\in \mathbb{N}})&=&\alpha^{-1}\{a_n\}_{n\in \mathbb{N}},\\
C'^{-1}(\{a_n\}_{n\in \mathbb{N}})&=&\beta^{-1}\{a_n\}_{n\in \mathbb{N}}.\\
\end{eqnarray*}
 $ \pi_{W_j}(\{a_n\}_{n\in \mathbb{N}})= \{b_n\}_{n\in \mathbb{N}}^j $, avec $b_n=0,\quad\forall n\neq 2j$. 
  \begin{align*}
\pi_{W_j}(\{a_n\}_{n\in \mathbb{N}})=( 0,0,...,0,a_{2j},0,..,).  
  \end{align*}
\begin{eqnarray*}
\langle\pi_{W_j}C(\{a_n\}_{n\in \mathbb{N}}),\pi_{W_i}C'(\{a_n\}_{n\in \mathbb{N}})\rangle_{\am}&=&\langle \alpha\{b_n\}_{n\in \mathbb{N}}^j ,\beta\{b_n\}_{n\in \mathbb{N}}^j \rangle_{\am}\\ &=&\alpha\beta( 0,0,...,0,\vert a_{2j}\vert^2,0,..,0).
\end{eqnarray*}
\begin{align*}
\sum_{i\in I} w_i^2\langle\pi_{W_j}C(\{a_n\}_{n\in \mathbb{N}}),\pi_{W_i}C'(\{a_n\}_{n\in \mathbb{N}})\rangle_{\am}=\sum_{i\in I} w_i^2\alpha\beta( 0,0,...,0,\vert a_{2j}\vert^2,0,..,).
\end{align*}
$\{W_i,w_i\}_{i\in I}$ is not a controlled $*$-fusion frame. Now if we considere  the operator \begin{eqnarray*} K :\quad \h &\longrightarrow& \h\\
\{a_n\}_{n\in \mathbb{N}}&\longmapsto&\beta\{b_n\}_{n\in \mathbb{N}},
\end{eqnarray*}
where $$b_{2n} =\left\lbrace\begin{array}{lll}
w_{2n}\frac{a_{2n}}{2} ;\\
0 & sinon. &
\end{array}\right. $$
$\{W_i,w_i\}_{i\in I}$ becomes a controlled $*$-$K$-fusion frame, indeed, for each  $\{a_n\}_{n\in \mathbb{N}}\in\h$, we have
\begin{eqnarray*}
\langle K^* (\{a_n\}_{n\in \mathbb{N}}),K^*(\{a_n\}_{n\in \mathbb{N}})\rangle _{\am}&=& \frac{1}{4}\langle \alpha(w_{2n}\{a_{2n}\}_{n\in \mathbb{N}}),\beta(w_{2n}\{a_{2n}\}_{n\in \mathbb{N}})\rangle_{\am}\\ &=&\frac{\alpha\beta}{4}(w_{2n}^2\vert a_n\vert^2)_{n\in \mathbb{N}}\\&\leq& \frac{1}{4} \sum_{i\in I} w_i^2\langle\pi_{W_i}C f,\pi_{W_i}C' f\rangle_{\am}.
\end{eqnarray*}
It follows that 
\begin{align*}
2.1_{\am}\langle K^* (\{a_n\}_{n\in \mathbb{N}}),K^*(\{a_n\}_{n\in \mathbb{N}})\rangle _{\am} 2.1_{\am} \leq \sum_{i\in I} w_i^2\langle\pi_{W_i}C f,\pi_{W_i}C' f\rangle_{\am}.
\end{align*}
\end{exa}
\begin{rem}\label{rem: relation}
If $K\in End^*_{\am}(\h)$ is a surjective operator, then every controlled $*$-$K$-fusion frame is a controlled $*$-fusion frame for $\h$.
\end{rem} 
\subsection{Some characterizations of controlled $*$-$K$-fusion frame in Hilbert C*-module}
In this section, we will character the equivalencies of controlled $*$-$K$-fusion frame in Hilbert $C^*$-module from several aspects. As for Theorem (\ref{th: characterizationnorm}), we show that the controlled $*$-$K$-fusion frame is equivalent to which the middle of (\ref{def: c*kff}) is norm bounded (see (\ref{def: cNkff})).  the analysis, synthesis controlled $*$-$K$-fusion frame operator are defined the same as in controlled $*$-fusion frame case.

\begin{thm}\label{th: characterizationnorm} Let $K\in End^*_{\am}(\h)$ be a closed range, Let $\{W_i\}_{i\in I}$ be a family of orthogonally complemented submodules of Hilbert $\am$-module $\mathcal{H}$. Also, let $\{w_i\}_{i\in I}$ be a family of weights, and let $C, C'\in GL^+(\mathcal{H})$ such that  $\sum_{i\in I} w_i^2\langle\pi_{W_i}C f,\pi_{W_i}C' f\rangle_{\am}$ converge in norm $\am$. Then $\mathcal{W}=\{W_i,w_i\}_{i\in I}$ is a $(C,C')$-controlled $*$-$K$-fusion frame if and only if
\begin{equation} \label{def: cNkff}
\Vert A^{-1}\Vert^{-2}\Vert \langle K^* f, K^* f\rangle_{\am}\Vert^2\leq\Vert\sum_{i\in I} w_i^2\langle\pi_{W_i}C f,\pi_{W_i}C' f\rangle_{\am}\Vert \leq \Vert B\Vert^2 \Vert\langle f,f\rangle_{\am} \Vert^2, 
\end{equation}
for all $f \in\mathcal{H}$ and strictly nonzero elements $A,B$ in $\am$.
\end{thm}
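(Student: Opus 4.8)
The plan is to move back and forth between the operator inequalities \eqref{def: c*kff} that define a $(C,C')$-controlled $*$-$K$-fusion frame and the scalar inequalities \eqref{def: cNkff}, using two elementary facts about a $C^{*}$-algebra: the norm is monotone on positive elements, and $\|bxb^{*}\|\le\|b\|^{2}\|x\|$ whenever $x\ge 0$. Throughout I would keep in play the controlled analysis operator $T\colon\h\to l^{2}(\h)$, $Tf=(w_i(C'^{*}\pi_{W_i}C)^{1/2}f)_{i\in I}$, and $S_{CC'}=T^{*}T=\sum_{i\in I}w_i^{2}C'^{*}\pi_{W_i}Cf$, so that for every $f\in\h$ one has $\sum_{i\in I}w_i^{2}\langle\pi_{W_i}Cf,\pi_{W_i}C'f\rangle_{\am}=\langle Tf,Tf\rangle_{l^{2}(\h)}=\langle S_{CC'}f,f\rangle_{\am}$; the convergence hypothesis of the theorem is precisely what makes $T$ well defined.

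For the direct implication, assume \eqref{def: c*kff}. Taking norms in the right-hand inequality and using $\|B\langle f,f\rangle_{\am}B^{*}\|\le\|B\|^{2}\|\langle f,f\rangle_{\am}\|$ gives the upper estimate of \eqref{def: cNkff}. For the lower one I would write $\langle K^{*}f,K^{*}f\rangle_{\am}=A^{-1}\bigl(A\langle K^{*}f,K^{*}f\rangle_{\am}A^{*}\bigr)(A^{-1})^{*}$, take norms, bound the central factor by $\|\sum_{i\in I}w_i^{2}\langle\pi_{W_i}Cf,\pi_{W_i}C'f\rangle_{\am}\|$ through the left-hand inequality of \eqref{def: c*kff}, and arrive at $\|A^{-1}\|^{-2}\|\langle K^{*}f,K^{*}f\rangle_{\am}\|\le\|\sum_{i\in I}w_i^{2}\langle\pi_{W_i}Cf,\pi_{W_i}C'f\rangle_{\am}\|$.

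For the converse, assume \eqref{def: cNkff}. From its upper estimate, $\|Tf\|^{2}=\|\langle Tf,Tf\rangle_{l^{2}(\h)}\|\le\|B\|^{2}\|f\|^{2}$, hence $\|S_{CC'}\|=\|T^{*}T\|=\|T\|^{2}\le\|B\|^{2}$; since $S_{CC'}\ge 0$ this yields $S_{CC'}\le\|B\|^{2}I_{\h}$, and evaluating at $f$ gives the right-hand inequality of \eqref{def: c*kff} with $B$ replaced by $\|B\|1_{\am}$. For the lower frame inequality, the lower estimate of \eqref{def: cNkff} says exactly that $\|K^{*}f\|\le\|A^{-1}\|\,\|Tf\|$ for all $f\in\h$. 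I would then apply Lemma \ref{lem: ortocomplope} with $\mathcal T:=T^{*}$ and $\mathcal T':=K$: its orthogonal-complementation hypothesis is met because $\overline{\mathrm{Im}(\mathcal T^{*})}=\overline{\mathrm{Im}(T)}$ is the whole closed submodule that is the codomain of $T$, hence trivially orthogonally complemented. The equivalence (ii)$\Leftrightarrow$(i) of that lemma converts $\|\mathcal T'^{*}z\|\le\|A^{-1}\|\,\|\mathcal T^{*}z\|$ into $KK^{*}=\mathcal T'\mathcal T'^{*}\le\|A^{-1}\|^{2}\mathcal T\mathcal T^{*}=\|A^{-1}\|^{2}S_{CC'}$; pairing with $f$ then gives $\langle K^{*}f,K^{*}f\rangle_{\am}\le\|A^{-1}\|^{2}\sum_{i\in I}w_i^{2}\langle\pi_{W_i}Cf,\pi_{W_i}C'f\rangle_{\am}$, i.e. the left-hand inequality of \eqref{def: c*kff} with lower bound $\|A^{-1}\|^{-1}1_{\am}$. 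The closed-range assumption on $K$ is what keeps the images that occur orthogonally complemented (for instance $\overline{\mathrm{Im}(K^{*})}$), so that the Douglas-type lemma may equally be used in the dual orientation if preferred.

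The step I expect to be the real obstacle is this last one: upgrading the mere norm domination $\|K^{*}f\|\le\|A^{-1}\|\,\|Tf\|$ to a genuine order inequality $KK^{*}\le\|A^{-1}\|^{2}S_{CC'}$. Such an upgrade fails in general in Hilbert $C^{*}$-modules, so the argument rests entirely on arranging the hypotheses of Lemma \ref{lem: ortocomplope} — especially on choosing the operators so that the orthogonal-complementation requirement holds automatically, here by exploiting that $\mathrm{Im}(T)$ is closed by construction — and on verifying that the constant transfers as $\|A^{-1}\|^{2}$.
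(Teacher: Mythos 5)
Your proof is correct and follows the same overall strategy as the paper: the forward direction is identical (take norms in \eqref{def: c*kff}, using $\Vert BxB^*\Vert\le\Vert B\Vert^2\Vert x\Vert$ for the upper bound and the factorization $\langle K^*f,K^*f\rangle_{\am}=A^{-1}\bigl(A\langle K^*f,K^*f\rangle_{\am}A^*\bigr)(A^{-1})^*$ for the lower one), and in both arguments the converse rewrites the middle sum as $\langle S_{CC'}f,f\rangle_{\am}$ and then upgrades the resulting norm domination to an order inequality by appeal to a preliminary lemma. The one genuine divergence is which lemma carries that last step. The paper factors through $S_{CC'}^{1/2}$ and cites the lemma from \cite{LA07} (surjectivity $\Leftrightarrow$ bounded below in norm $\Leftrightarrow$ bounded below with respect to the inner product); that lemma only converts $\Vert S_{CC'}^{1/2}f\Vert\ge c\Vert f\Vert$ into $\langle S_{CC'}^{1/2}f,S_{CC'}^{1/2}f\rangle_{\am}\ge c'\langle f,f\rangle_{\am}$, i.e.\ it handles domination relative to the identity, not relative to $K^*$, so as written the paper's converse does not literally deliver the left inequality of \eqref{def: c*kff} unless $K$ is surjective. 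You instead invoke the Douglas-type Lemma \ref{lem: ortocomplope} with $T^*$ and $K$, which is exactly the tool that turns $\Vert K^*f\Vert\le\Vert A^{-1}\Vert\,\Vert Tf\Vert$ into $KK^*\le\lambda\,S_{CC'}$; this is the right repair, and your worry about whether $\lambda$ equals $\Vert A^{-1}\Vert^2$ is immaterial here, since the theorem only asserts existence of some strictly nonzero bounds, so any $\lambda>0$ yields the lower bound $\lambda^{-1/2}1_{\am}$. Two caveats worth recording: the orthogonal-complementation hypothesis of Lemma \ref{lem: ortocomplope} is met only because the paper declares $\mathcal{K}=\mathrm{Im}(T)$ closed in the preliminaries (in general that closedness is itself usually derived from the lower frame bound, so there is a latent circularity inherited from the paper, not introduced by you); and the exponent $2$ on $\Vert\langle K^*f,K^*f\rangle_{\am}\Vert$ and $\Vert\langle f,f\rangle_{\am}\Vert$ in \eqref{def: cNkff} is a typo in the statement --- both your argument and the paper's prove the version without those squares.
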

\begin{proof}
Assume that $\{W_i,w_i\}_{i\in I}$ is a controlled $*$-$K$-fusion frame, then for $f\in\h$ we have 
\begin{eqnarray*}
\langle K^*f,K^*f \rangle_{\am} &\leq & A^{-1}\langle S_{(C,C')}f, f\rangle_{\am}(A^*)^{-1},  \\
\langle S_{(C,C')}f,f \rangle_{\am} &\leq & B\langle f,f\rangle_{\am} B^*.
\end{eqnarray*}
Hence,
\begin{equation*} \label{def: cNkff}
\Vert A^{-1}\Vert^{-2}\Vert \langle K^* f, K^* f\rangle_{\am}\Vert\leq\Vert\sum_{i\in I} w_i^2\langle\pi_{W_i}C f,\pi_{W_i}C' f\rangle_{\am}\Vert \leq \Vert B\Vert^2 \Vert\langle f,f\rangle_{\am} \Vert. 
\end{equation*}
Conversely, suppose that (\ref{def: cNkff}) holds. Since $S_{(C,C')}$ is a positive, self adjoint, and invertible  controlled $*$-$K$-fusion frame operator, then  for each $f\in\h$ we have \begin{eqnarray*}
\langle (S_{(C,C')})^{\frac{1}{2}}f,  (S_{(C,C')})^{\frac{1}{2}}f\rangle_{\am} &=& \langle S_{(C,C')}f,f\rangle_{\am}\\
&=&\sum_{i\in I} w_i^2\langle\pi_{W_i}C f,\pi_{W_i}C' f\rangle_{\am}.
\end{eqnarray*}
Combining this with (\ref{def: cNkff}), we obtain \begin{align}\label{def: cSkff}
\Vert A^{-1}\Vert^{-2}\Vert K^*f\Vert^2 \leq \Vert (S_{(C,C')})^{\frac{1}{2}}f\Vert^2 \leq \Vert B\Vert^2 \Vert f\Vert^2.
\end{align}
By using (\ref{def: cSkff}) and lemma \cite{LA07}, we conclude that $\{W_i,w_i\}_{i\in I}$ is a $ (C,C')$-controlled $*$-$K$-fusion frame.
\end{proof}
\begin{prop} Assume that $C$ and $K^*$ commute with each other.  
The family $\{W_i,w_i\}_{i\in I}$ is a controlled $*$-$K$-fusion frame for $\h$ if and only if $\{W_i,w_i\}_{i\in I}$ is a $ (C,C)$-controlled $*$-$K$-fusion frame for $\h$.
\end{prop}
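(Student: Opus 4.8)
The plan is to notice that the defining inequality of a $(C,C)$-controlled $*$-$K$-fusion frame is, term by term, the defining inequality of an (ordinary) $*$-$K$-fusion frame with the test vector $f$ replaced by $Cf$; one then only has to absorb the operator $C$ into the frame bounds, using $C\in GL^{+}(\h)$ together with the hypothesis $CK^{*}=K^{*}C$. More precisely, for every $g\in\h$ the middle term $\sum_{i\in I}w_i^{2}\langle\pi_{W_i}Cg,\pi_{W_i}Cg\rangle_{\am}$ of the $(C,C)$-controlled inequality is literally the middle term $\sum_{i\in I}w_i^{2}\langle\pi_{W_i}h,\pi_{W_i}h\rangle_{\am}$ of the $*$-$K$-fusion frame inequality evaluated at $h=Cg$, and conversely the latter at $h$ is the former at $g=C^{-1}h$. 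Since $C$ and $C^{-1}$ are bijections of $\h$, convergence in norm of one of the two series for every test vector is equivalent to that of the other, and the standing hypotheses on $\{W_i\}$ and $\{w_i\}$ are the same in both notions, so nothing has to be checked there.

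For the implication ``$*$-$K$-fusion frame $\Rightarrow$ $(C,C)$-controlled $*$-$K$-fusion frame'' I substitute $f\mapsto Cf$ in $A\langle K^{*}f,K^{*}f\rangle_{\am}A^{*}\le\sum_{i\in I}w_i^{2}\langle\pi_{W_i}f,\pi_{W_i}f\rangle_{\am}\le B\langle f,f\rangle_{\am}B^{*}$. On the right, $\langle Cf,Cf\rangle_{\am}=\langle C^{2}f,f\rangle_{\am}\le\|C\|^{2}\langle f,f\rangle_{\am}$ (here $C^{*}=C$ since $C$ is positive), so the upper bound $B$ is replaced by $\|C\|B$. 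On the left, $CK^{*}=K^{*}C$ yields $\langle K^{*}Cf,K^{*}Cf\rangle_{\am}=\langle C^{2}K^{*}f,K^{*}f\rangle_{\am}\ge\|C^{-1}\|^{-2}\langle K^{*}f,K^{*}f\rangle_{\am}$, where the last step uses that $\langle C^{2}g,g\rangle_{\am}\ge\|C^{-1}\|^{-2}\langle g,g\rangle_{\am}$ for $C\in GL^{+}(\h)$; applying the order-preserving map $a\mapsto AaA^{*}$ replaces the lower bound $A$ by $\|C^{-1}\|^{-1}A$. Thus $\mathcal{W}$ is a $(C,C)$-controlled $*$-$K$-fusion frame with bounds $\|C^{-1}\|^{-1}A$ and $\|C\|B$, both still strictly nonzero elements of $\am$ since they are positive scalar multiples of $A$ and $B$.

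The converse is symmetric: from $A\langle K^{*}f,K^{*}f\rangle_{\am}A^{*}\le\sum_{i\in I}w_i^{2}\langle\pi_{W_i}Cf,\pi_{W_i}Cf\rangle_{\am}\le B\langle f,f\rangle_{\am}B^{*}$ one substitutes $f\mapsto C^{-1}f$, uses $\langle C^{-1}f,C^{-1}f\rangle_{\am}\le\|C^{-1}\|^{2}\langle f,f\rangle_{\am}$ on the right, and on the left uses that $C^{-1}$ also commutes with $K^{*}$ together with $\langle C^{-2}g,g\rangle_{\am}\ge\|C\|^{-2}\langle g,g\rangle_{\am}$, arriving at a $*$-$K$-fusion frame with bounds $\|C\|^{-1}A$ and $\|C^{-1}\|B$. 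The only point requiring care, and essentially the whole content of the proof, is the repeated combined use of the elementary estimates valid for $C\in GL^{+}(\h)$, namely $\|C^{-1}\|^{-2}\langle g,g\rangle_{\am}\le\langle C^{2}g,g\rangle_{\am}\le\|C\|^{2}\langle g,g\rangle_{\am}$ and $\|C\|^{-2}\langle g,g\rangle_{\am}\le\langle C^{-2}g,g\rangle_{\am}\le\|C^{-1}\|^{2}\langle g,g\rangle_{\am}$, with the fact that $a\mapsto AaA^{*}$ is monotone on the positive cone of $\am$; modulo these facts the argument is a plain change of variables and there is no deeper obstacle.
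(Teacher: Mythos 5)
Your proof is correct and follows essentially the same route as the paper's: a change of variables $f\leftrightarrow Cf$ (resp.\ $C^{-1}f$) in the defining inequality, combined with the commutation $CK^{*}=K^{*}C$ and the two-sided norm estimates $\Vert C^{-1}\Vert^{-2}\langle g,g\rangle_{\am}\le\langle Cg,Cg\rangle_{\am}\le\Vert C\Vert^{2}\langle g,g\rangle_{\am}$, yielding the same bounds $\Vert C\Vert^{-1}A$ and $\Vert C^{-1}\Vert B$ that the paper obtains. If anything, your version is more carefully written: the paper's converse direction is only a one-line sketch, and you make explicit the monotonicity of $a\mapsto AaA^{*}$ and the equivalence of the convergence of the two series, which the paper leaves implicit.
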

\begin{proof} Suppose that $\{W_i,w_i\}_{i\in I}$ is a $(C,C)$-controlled $*$-$K$-fusion frame with bounds $A$ and $B$. by definition, we have  \begin{align*}
\sum_{i\in I} w_i^2\langle\pi_{W_i}C f,\pi_{W_i}C f\rangle_{\am} \leq B\langle f,f\rangle_{\am}A^*,
\end{align*}
On the one hand, for all $f\in\h$, we have 
\begin{eqnarray*}
A\langle K^*f, K^*f\rangle_{\am} A^*&=& A\langle K^*CC^{-1}f, K^* CC^{-1}f \rangle_{\am} A^*\\
&\leq & A\Vert C\Vert^2 \langle K^*C^{-1}f, K^*C^{-1}f\rangle_{\am} A^*\\
&\leq & \Vert C\Vert^2 \sum_{i\in I} w_i^2\langle\pi_{W_i}C C^{-1}f,\pi_{W_i}CC^{-1} f\rangle_{\am} \\
& = &  \Vert C\Vert^2 \sum_{i\in I} w_i^2\langle\pi_{W_i}f,\pi_{W_i} f\rangle_{\am}.
\end{eqnarray*}
Then,
\begin{align}\label{prp: al1 }
(\Vert C\Vert^{-1}A)\langle K^*f, K^*f\rangle_{\am} (\Vert C\Vert^{-1}A)^*\leq  \sum_{i\in I} w_i^2\langle\pi_{W_i}f,\pi_{W_i} f\rangle_{\am}.
\end{align}
On the other hand, for any $f\in \h$, we have 
\begin{eqnarray*}
\sum_{i\in I} w_i^2\langle\pi_{W_i}f,\pi_{W_i} f\rangle_{\am} &=& \sum_{i\in I} w_i^2\langle CC^{-1}\pi_{W_i}f,CC^{-1}\pi_{W_i} f\rangle_{\am}\\ &\leq & B \langle C^{-1}f,C^{-1}f\rangle_{\am}B^*\\
&\leq& (\Vert C^{-1}\Vert B) \langle f,f\rangle_{\am} (\Vert C^{-1}\Vert B)^*.
\end{eqnarray*} 
For the converse, for all $f\in \h$ we have
\begin{align*}
A'\langle CC^{-1}f,CC^{-1}f\rangle_{\am}A'^* \leq \Vert C^{-1}\Vert^2 \sum_{i\in I} w_i^2\langle\pi_{W_i}Cf,\pi_{W_i}C f\rangle_{\am}.
\end{align*}
\end{proof}

 Controlled K-fusion frame operator of $(C,C')$-controlled $K$-fusion frame is not invertible in general, but we can show that it is invertible on the subspace $Im(K) \subset \mathcal{H}$. In fact, since $Im(K)$ is closed \begin{align*}
 KK^\dagger\mid_{Im(K)}= id_{Im(K)},
 \end{align*}
so we have \begin{align*}
 id^*_{Im(K)}=(K^\dagger\mid_{Im(K)})^*K^*.
\end{align*}
Hence for any $f\in Im(K)$
\begin{eqnarray*}
\Vert f\Vert =\Vert (K^\dagger\mid_{Im(K)})^*K^* f\Vert\leq \Vert K^\dagger\Vert \Vert K^* f\Vert,
\end{eqnarray*}
that is, $\Vert K^* f\Vert^2\geq \Vert K^\dagger\Vert^{-2}\Vert \Vert f \Vert^2$. Combined with (\ref{def: cNkff}) we have \begin{align*}
\langle S_{CC'}f,f\rangle_{\am} \geq A_{CC'}\Vert K^* f\Vert^2\geq A_{CC'}\Vert K^\dagger\Vert^{2}\Vert f\Vert^2,\quad \forall f\in Im(K). 
\end{align*}
So from the definition of $(C,C')$-controlled $K$-fusion frame, which implies that $S:\quad Im(K)\longrightarrow S(Im(K))$ is an $*$-isomorphism.\\
Thus, by using Theorem (\ref{th:reconstructionformula1}) and Remark (\ref{rem: relation}), we immediately obtain the following result. 

\textbf{\textit{Remark}:} If $\{W_i,w_i\}_{i\in I}$ is a $(C,C')$-controlled $*$-$K$-fusion frame for $\h$ and $K\in End^*_{\mathcal{A}}(\mathcal{H})$ be a surjective operator. Then the frame operator $S_{CC'}$ is positive, invertible and adjointable. Morever, we have the reconstruction formula $f=\sum_{i\in I} w_i^2 C'^{*}\pi_{W_i}C S_{CC'}^{-1}f,\quad f\in \h.$
\begin{thm}
The controlled $*$-$K$-fusion frame operator $S_{CC'}$ is bounded, positive, self-adjoint, and \begin{equation}
\Vert A^{-1}\Vert^{-2}\Vert K\Vert^2\leq \Vert S\Vert\leq\Vert B\Vert^2.
\end{equation}
\end{thm}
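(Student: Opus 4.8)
The plan is to read off the four assertions from the factorisation $S_{CC'}=T^{*}T$ — where $T$ is the controlled analysis operator — together with the defining inequalities of a $(C,C')$-controlled $*$-$K$-fusion frame,
\[
A\langle K^{*}f,K^{*}f\rangle_{\am}A^{*}\ \le\ \langle S_{CC'}f,f\rangle_{\am}\ \le\ B\langle f,f\rangle_{\am}B^{*},\qquad f\in\h .
\]
By Theorem~\ref{th: ckfsoperator} (applied to the Bessel part of the frame) $T$ is a well-defined bounded adjointable operator, so $S_{CC'}=T^{*}T$ is adjointable and bounded, $(S_{CC'})^{*}=(T^{*}T)^{*}=T^{*}T=S_{CC'}$, and $\langle S_{CC'}f,f\rangle_{\am}=\langle Tf,Tf\rangle_{\am}\ge 0$; hence $S_{CC'}$ is self-adjoint and positive. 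This disposes of the qualitative claims, and it remains to establish the two norm estimates.

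For the upper estimate I would take $C^{*}$-norms in $\langle S_{CC'}f,f\rangle_{\am}\le B\langle f,f\rangle_{\am}B^{*}$; since both sides are positive, monotonicity of the $C^{*}$-norm gives $\Vert\langle S_{CC'}f,f\rangle_{\am}\Vert\le\Vert B\Vert^{2}\Vert f\Vert^{2}$. Writing $S_{CC'}=(S_{CC'}^{1/2})^{*}S_{CC'}^{1/2}$ one has $\Vert S_{CC'}\Vert=\Vert S_{CC'}^{1/2}\Vert^{2}=\sup_{\Vert f\Vert\le 1}\Vert S_{CC'}^{1/2}f\Vert^{2}=\sup_{\Vert f\Vert\le 1}\Vert\langle S_{CC'}f,f\rangle_{\am}\Vert$, which is therefore $\le\Vert B\Vert^{2}$.

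For the lower estimate I would start from $A\langle K^{*}f,K^{*}f\rangle_{\am}A^{*}\le\langle S_{CC'}f,f\rangle_{\am}$, so the same monotonicity yields $\Vert A\langle K^{*}f,K^{*}f\rangle_{\am}A^{*}\Vert\le\Vert\langle S_{CC'}f,f\rangle_{\am}\Vert\le\Vert S_{CC'}\Vert\,\Vert f\Vert^{2}$. Then from $\langle K^{*}f,K^{*}f\rangle_{\am}=A^{-1}\bigl(A\langle K^{*}f,K^{*}f\rangle_{\am}A^{*}\bigr)(A^{-1})^{*}$ and again monotonicity of the norm, $\Vert K^{*}f\Vert^{2}=\Vert\langle K^{*}f,K^{*}f\rangle_{\am}\Vert\le\Vert A^{-1}\Vert^{2}\Vert S_{CC'}\Vert\,\Vert f\Vert^{2}$. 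Taking the supremum over $\Vert f\Vert\le 1$ and using $\Vert K\Vert=\Vert K^{*}\Vert$ gives $\Vert K\Vert^{2}\le\Vert A^{-1}\Vert^{2}\Vert S_{CC'}\Vert$, that is, $\Vert A^{-1}\Vert^{-2}\Vert K\Vert^{2}\le\Vert S_{CC'}\Vert$, as required.

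The argument is essentially bookkeeping with the $C^{*}$-order and the $C^{*}$-norm, so I do not expect a genuine obstacle; the only step worth stating carefully is the standard identity $\Vert S\Vert=\sup_{\Vert f\Vert\le 1}\Vert\langle Sf,f\rangle_{\am}\Vert$ for a positive adjointable operator $S$ on a Hilbert $C^{*}$-module (equivalently $\Vert S\Vert=\Vert S^{1/2}\Vert^{2}$), since this is what converts the pointwise operator inequalities into the stated bound on $\Vert S_{CC'}\Vert$. One should also note that $A$ is assumed invertible, so that $\Vert A^{-1}\Vert$ makes sense, consistently with its use in Theorem~\ref{th: characterizationnorm}.
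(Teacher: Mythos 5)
Your proposal is correct and follows essentially the same route as the paper: both arguments read off positivity and self-adjointness from the structure of $S_{CC'}$ (the paper by direct manipulation of the defining sum, you via the factorisation $S_{CC'}=T^{*}T$), then take $C^{*}$-norms in the frame inequality, conjugate by $A^{-1}$ to isolate $\langle K^{*}f,K^{*}f\rangle_{\am}$, and pass to the supremum over $\Vert f\Vert\le 1$. If anything, your version is slightly cleaner in that it states explicitly the identity $\Vert S\Vert=\sup_{\Vert f\Vert\le 1}\Vert\langle Sf,f\rangle_{\am}\Vert$ for positive adjointable $S$, which the paper uses tacitly.
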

\begin{proof}
Using the definition (\ref{def: c*kff}), we have for all $f\in\h$
\begin{eqnarray*}
\langle S_{CC'}f,f\rangle_{\am} &=& \langle \sum_{i\in I} w_i^2 C'^{*}\pi_{W_i}C(f),f \rangle_{\am}\\
&=&\sum_{i\in I} w_i^2\langle \pi_{W_i}C(f),\pi_{W_i}C'(f) \rangle_{\am}\\
&=&\sum_{i\in I} w_i^2\langle f,C^*\pi_{W_i}C'(f) \rangle_{\am}\\
&=&\langle f,\sum_{i\in I} w_i^2 C'^{*}\pi_{W_i}C(f)\rangle_{\am}\\
&=&\langle f,S_{CC'}f\rangle_{\am}.
\end{eqnarray*}
Then, $S$ is a self adjoint operator.
The inequality (\ref{def: c*kff}) means that for all $f\in\h$, we have \begin{align*}
A\langle K^*f,K^*f\rangle_{\am} A^*\leq \langle S_{CC'}f,f\rangle _{\am}\leq A\langle f,f\rangle_{\am} A^*.
\end{align*} 
It follows  \begin{align*}
\Vert A^{-1}\Vert^{-2}\Vert \langle K K^*f,f\rangle_{\am}\Vert \leq \Vert\langle Sf,f\rangle_{\am}\Vert\leq\Vert B \Vert^2 \Vert\langle f,f\rangle_{\am}\Vert.
\end{align*}
If we take supremum on all $f \in \h$, where $\Vert f\Vert\leq 1$, then $\Vert A_{CC'}^{-1}\Vert^{-2}\Vert K\Vert^2\leq \Vert S_{CC'}\Vert\leq\Vert B_{CC'}\Vert^2$.
\end{proof}
\begin{prop} Let $K,M\in End^*_{\mathcal{A}}(\mathcal{H})$, and $C,C'\in GL(\h)$, The family $\{W_i,w_i\}_{i\in I}$ is a $(C,C')$-controlled $*$-$K$-fusion frame for $\h$ with bounds $A$ and $B$. If $Im(M)\subset  Im(K)$ with $\overline{Im(K^*)}$ is orthogonally complemented, then $\{W_i,w_i\}_{i\in I}$ is a $(C,C')$-controlled $*$-$M$-fusion frame for $\h$ with bounds $\dfrac{A}{\lambda'}$ and $B$.
\end{prop}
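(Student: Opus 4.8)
The plan is to deduce the statement from Lemma~\ref{lem: ortocomplope}. First I would apply that lemma with $T=K$ and $T'=M$, both regarded as adjointable operators from $\h$ into $\h$: the hypotheses $Im(M)\subset Im(K)$ and $\overline{Im(K^*)}$ orthogonally complemented are exactly condition (iii) and the standing assumption of the lemma, so the implication (iii)$\Rightarrow$(i) yields a constant $\lambda>0$ with $MM^*\leq \lambda KK^*$ in $End^*_{\am}(\h)$. I will then set $\lambda':=\sqrt{\lambda}$, which is the constant named in the statement.

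Next I would push this operator inequality down to the $\am$-valued inner products. For every $f\in\h$ one has $\langle M^*f,M^*f\rangle_{\am}=\langle MM^*f,f\rangle_{\am}\leq \lambda\,\langle KK^*f,f\rangle_{\am}=\lambda\,\langle K^*f,K^*f\rangle_{\am}$, using the adjoint relation and the fact that evaluating a positive-operator inequality at a fixed vector preserves the order on $\am$. Conjugating by $A$ (multiplication on the left by $A$ and on the right by $A^*$ preserves $\leq$) and dividing by the scalar $\lambda$ gives
\begin{equation*}
\frac{A}{\lambda'}\,\langle M^*f,M^*f\rangle_{\am}\,\left(\frac{A}{\lambda'}\right)^{*}=\frac{1}{\lambda}\,A\,\langle M^*f,M^*f\rangle_{\am}\,A^{*}\leq A\,\langle K^*f,K^*f\rangle_{\am}\,A^{*},
\end{equation*}
and the lower $(C,C')$-controlled $*$-$K$-fusion frame inequality in~(\ref{def: c*kff}) bounds the right-hand side by $\sum_{i\in I} w_i^2\langle\pi_{W_i}Cf,\pi_{W_i}C'f\rangle_{\am}$. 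This establishes the desired lower bound $A/\lambda'$ for the $M$-version.

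Finally, the upper bound requires no work: the inequality $\sum_{i\in I} w_i^2\langle\pi_{W_i}Cf,\pi_{W_i}C'f\rangle_{\am}\leq B\langle f,f\rangle_{\am}B^{*}$ involves neither $K$ nor $M$, so the same element $B$ serves as the upper bound for $\{W_i,w_i\}_{i\in I}$ viewed as a $(C,C')$-controlled $*$-$M$-fusion frame. Combining the two estimates completes the argument. I expect the only delicate point to be checking that the hypotheses of Lemma~\ref{lem: ortocomplope} are genuinely met --- in particular that orthogonal complementedness of $\overline{Im(K^*)}$ is precisely what licenses the passage from the range inclusion $Im(M)\subset Im(K)$ to the concrete inequality $MM^*\leq\lambda KK^*$; once that lemma is invoked, the rest is routine manipulation of $C^*$-valued inequalities.
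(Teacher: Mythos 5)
Your argument is correct and follows essentially the same route as the paper: invoke Lemma~\ref{lem: ortocomplope} (with $T=K$, $T'=M$) to get $MM^*\leq\lambda KK^*$ from the range inclusion, then conjugate by $A$ and rescale to obtain the lower bound, the upper bound being untouched. Your version is in fact slightly more careful than the paper's, which conflates the lemma's constant with the $\lambda'$ of the statement (its proof ends with bound $A/\sqrt{\lambda'}$ rather than the stated $A/\lambda'$), an inconsistency your choice $\lambda':=\sqrt{\lambda}$ quietly repairs.
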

\begin{proof}
Since $Im(M)\subset  Im(K)$, from theorem (\ref{lem: ortocomplope}), there exists some $\lambda'>0$ such that $MM^*\leq \lambda' KK^*$.\\ So we have\begin{align*}
\langle MM^*f,f\rangle_{\am}\leq \lambda'\langle KK^*f,f\rangle_{\am}.
\end{align*}
Multipling the above inequality by $A$ and $A^*$, we get \begin{align*}
(\dfrac{A}{\sqrt{\lambda'}})\langle MM^*f,f\rangle_{\am} 
(\dfrac{A}{\sqrt{\lambda'}})^* \leq A\langle KK^*f,f\rangle_{\am} A^*,
\end{align*}
from , we have
Therefore, by combining  with (\ref{def: c*kff}), we conclude that $\{W_i,w_i\}_{i\in I}$ is a $(C,C')$-controlled $*$-$M$-fusion frame for $\h$ with lower and upper bounds $\dfrac{A}{\sqrt{\lambda'}}$ and $B$.
\end{proof}
We further observe that the intersection of the element of a frame of subspaces with a subspases still lives a frame of subspaces for a smaller  submodule of a Hilbert $C^*$-module.\\
\textbf{\textit{Remark}:} If $\{W_i,w_i\}_{i\in I}$ be a $(C,C')$-controlled $*$-$K$-fusion frame for $\h$ with bounds $A$ and $B$ and $V$ be an orthogonally complemented submodule of  $\h$. Then the family  $\{W_i\cap V,w_i\}_{i\in I}$ is a $(C,C')$-controlled $*$-$K$-fusion frame for $\h$ with bounds $A$ and $B$.

\begin{thm} Let $(\h, \am, \langle.,.\rangle_{\am})$ and  $(\h, \mathcal{B}, \langle.,.\rangle_{\mathcal{B}})$ be two Hilbert $C^*$-modules and let $\varphi:\, \, \am\longrightarrow \mathcal{B}$ an $*$-homomorphism and $\Theta$ be a map on $\h$ such that $\langle \Theta x,\Theta y\rangle_{\am}=\varphi(\langle x,y \rangle_{\mathcal{B}})$, for all $x,y\in \h$. Also suppose that the family $\{W_i,w_i\}_{i\in I}$ is a $(C,C')$-controlled $*$-$K$-fusion frame for $\h$ with controlled $*$-$K$-frame operator $S_{\am}$ and lower and upper $*$-$K$-frame bounds A , B respectively. If $\Theta$ is surjective,  $\Theta K^*=K^*\Theta$, $\Theta\pi_{W_i}C=\pi_{W_i}C \Theta $ and $\Theta\pi_{W_i}C'=\pi_{W_i}C'\Theta $, for each $i$ in $I$. Then $\{W_i,\varphi(w_i)\}_{i\in I}$  is a $(C,C')$-controlled $*$-$K$-fusion frame for $(\h, \mathcal{B}, \langle.,.\rangle_{\mathcal{B}})$. with controlled $*$-$K$-frame operator $S_{\mathcal{B}}$ and lower and upper $*$-$K$-frame bounds $\varphi(A)$, $\varphi(B)$ respectively, and $\langle S_{\mathcal{B}} \Theta f,\Theta g \rangle_{\mathcal{B}}=\varphi(\langle S_{\am}f, g\rangle_{\am}).$
\end{thm}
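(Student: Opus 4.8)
The plan is to transport the defining inequality of $\mathcal{W}=\{W_i,w_i\}_{i\in I}$ across $\varphi$ and $\Theta$. Fix $g\in\h$; since $\Theta$ is surjective, pick $f\in\h$ with $\Theta f=g$. Applying the defining inequality (\ref{def: c*kff}) of the $(C,C')$-controlled $*$-$K$-fusion frame to this $f$ and then applying the $*$-homomorphism $\varphi$ to all three terms, I would use that $\varphi$ is order preserving (Lemma \cite{LA95}) and, being a $*$-homomorphism of $C^*$-algebras, contractive and hence norm continuous. Norm continuity together with the standing assumption that $\sum_{i\in I}w_i^2\langle\pi_{W_i}Cf,\pi_{W_i}C'f\rangle_{\am}$ converges in norm lets $\varphi$ pass through the middle series, giving
\begin{align*}
\varphi(A)\,\varphi\big(\langle K^*f,K^*f\rangle_{\am}\big)\,\varphi(A)^*\le\sum_{i\in I}\varphi(w_i)^2\,\varphi\big(\langle\pi_{W_i}Cf,\pi_{W_i}C'f\rangle_{\am}\big)\le\varphi(B)\,\varphi\big(\langle f,f\rangle_{\am}\big)\,\varphi(B)^*,
\end{align*}
where I have used multiplicativity and $*$-preservation on the outer terms; this also shows the transported series converges in $\mathcal{B}$.

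Next I would eliminate $\varphi$ and $f$ in favour of $g$ using the intertwining hypotheses, applying the relation between $\Theta$ and $\varphi$ in the form $\varphi(\langle x,y\rangle_{\am})=\langle\Theta x,\Theta y\rangle_{\mathcal{B}}$ (sesquilinearity extends it to mixed arguments). Combined with $\Theta K^*=K^*\Theta$ this gives $\varphi(\langle K^*f,K^*f\rangle_{\am})=\langle K^*\Theta f,K^*\Theta f\rangle_{\mathcal{B}}=\langle K^*g,K^*g\rangle_{\mathcal{B}}$; likewise $\varphi(\langle f,f\rangle_{\am})=\langle g,g\rangle_{\mathcal{B}}$, and with $\Theta\pi_{W_i}C=\pi_{W_i}C\Theta$, $\Theta\pi_{W_i}C'=\pi_{W_i}C'\Theta$ one gets $\varphi(\langle\pi_{W_i}Cf,\pi_{W_i}C'f\rangle_{\am})=\langle\pi_{W_i}Cg,\pi_{W_i}C'g\rangle_{\mathcal{B}}$. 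Substituting yields
\begin{align*}
\varphi(A)\langle K^*g,K^*g\rangle_{\mathcal{B}}\varphi(A)^*\le\sum_{i\in I}\varphi(w_i)^2\langle\pi_{W_i}Cg,\pi_{W_i}C'g\rangle_{\mathcal{B}}\le\varphi(B)\langle g,g\rangle_{\mathcal{B}}\varphi(B)^*.
\end{align*}
Since $\Theta$ is onto, $g$ is arbitrary, $\varphi(w_i)$ is a positive central element of $\mathcal{B}$ (invertible once $\varphi$ is unital), and $\varphi(A),\varphi(B)$ are strictly nonzero; this is precisely the statement that $\{W_i,\varphi(w_i)\}_{i\in I}$ is a $(C,C')$-controlled $*$-$K$-fusion frame for $(\h,\mathcal{B},\langle.,.\rangle_{\mathcal{B}})$ with bounds $\varphi(A),\varphi(B)$.

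For the operator identity, with $S_{\mathcal{B}}h=\sum_{i\in I}\varphi(w_i)^2C'^{*}\pi_{W_i}Ch$ I would compute, for $f,g\in\h$,
\begin{align*}
\langle S_{\mathcal{B}}\Theta f,\Theta g\rangle_{\mathcal{B}}=\sum_{i\in I}\varphi(w_i)^2\langle\pi_{W_i}C\Theta f,\pi_{W_i}C'\Theta g\rangle_{\mathcal{B}}=\sum_{i\in I}\varphi(w_i)^2\langle\Theta\pi_{W_i}Cf,\Theta\pi_{W_i}C'g\rangle_{\mathcal{B}},
\end{align*}
then apply $\varphi(\langle x,y\rangle_{\am})=\langle\Theta x,\Theta y\rangle_{\mathcal{B}}$ termwise and pull $\varphi$ back out of the norm-convergent series to obtain $\varphi\big(\sum_{i\in I}w_i^2\langle\pi_{W_i}Cf,\pi_{W_i}C'g\rangle_{\am}\big)=\varphi(\langle S_{\am}f,g\rangle_{\am})$, as claimed.

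The delicate points, rather than any single hard obstacle, are: (i) justifying that $\varphi$ commutes with the infinite sum — this is exactly where the hypothesis that the controlled series converges in norm is used, together with contractivity of $\varphi$, and it is also what guarantees the transported series converges in $\mathcal{B}$; (ii) interpreting the projections $\pi_{W_i}$ and the operators $C,C'$ consistently across the two module structures on $\h$, the commutation relations being tailored precisely so that $\Theta$ slides past $K^*$, $\pi_{W_i}C$ and $\pi_{W_i}C'$; and (iii) the admissibility of the new weights, since invertibility of $\varphi(w_i)$ in $\mathcal{B}$ needs $\varphi$ to be unital — this should be recorded as a hypothesis or remark.
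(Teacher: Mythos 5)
Your proposal is correct and follows essentially the same route as the paper: use surjectivity of $\Theta$ to write $g=\Theta f$, apply the order-preserving $*$-homomorphism $\varphi$ to the frame inequality, distribute it via multiplicativity, and convert $\varphi(\langle\cdot,\cdot\rangle_{\am})$ into $\langle\Theta\cdot,\Theta\cdot\rangle_{\mathcal{B}}$ using the intertwining relations, with the same termwise computation for the operator identity. Your added remarks on passing $\varphi$ through the norm-convergent series and on needing $\varphi$ unital for invertibility of $\varphi(w_i)$ are points the paper leaves implicit, and are worth recording.
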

\begin{proof} Let $g\in \h$, then there exists $f\in \h$ such that $\Theta f=g$, by the definition of the $(C,C')$-controlled $*$-$K$-fusion frame for $\h$ \begin{align*}
A \langle K^* f, K^* f\rangle_{\am}  A^* \leq\sum_{i\in I} w_i^2\langle\pi_{W_i}C f,\pi_{W_i}C' f\rangle_{\am}\leq B \langle f,f\rangle_{\am}  B^*, 
\end{align*}
Then,  \begin{align*}
\varphi(A \langle K^* f, K^* f\rangle_{\am}  A^*) \leq\varphi(\sum_{i\in I} w_i^2\langle\pi_{W_i}C f,\pi_{W_i}C' f\rangle_{\am})\leq \varphi(B \langle f,f\rangle_{\am}  B^*), 
\end{align*}
Since, $\varphi$ is an $*$-homomorphism, we have
\begin{align*}
\varphi(A)\varphi( \langle K^* f, K^* f\rangle_{\am})\varphi(  A^*) \leq \sum_{i\in I} \varphi( w_i^2)\varphi(\langle\pi_{W_i}C f,\pi_{W_i}C' f\rangle_{\am})\leq \varphi(B)\varphi( \langle f,f\rangle_{\am})\varphi(  B^*), 
\end{align*}
By the relation betwen $\Theta$ and $\varphi$ 
\begin{align*}
\varphi(A) \langle  \Theta K^* f, \Theta K^* f\rangle_{\mathcal{B}}\varphi(  A^*) \leq \sum_{i\in I} \varphi( w_i^2)\langle\Theta \pi_{W_i}C f,\Theta \pi_{W_i}C' f\rangle_{\mathcal{B}})\leq \varphi(B) \langle \Theta f,\Theta f\rangle_{\mathcal{B}})\varphi(  B^*), 
\end{align*}
\begin{align*}
\varphi(A) \langle   K^* \Theta f, K^* \Theta f\rangle_{\mathcal{B}}\varphi(  A^*) \leq \sum_{i\in I} \varphi( w_i^2)\langle\Theta \pi_{W_i}C f,\Theta \pi_{W_i}C' f\rangle_{\mathcal{B}})\leq \varphi(B) \langle \Theta f,\Theta f\rangle_{\mathcal{B}})\varphi(  B^*), 
\end{align*}
On the other hand, we have \begin{eqnarray*}
\varphi(\langle S_{\am}f, g\rangle_{\am})&=& \varphi(\langle\sum_{i\in I} w_i^2 \pi_{W_i}C f,\pi_{W_i}C' g> \rangle_{\am})\\
&=&\sum_{i\in I} \varphi( w_i^2)\langle \Theta \pi_{W_i}C f,\Theta \pi_{W_i}C'g \rangle_{\mathcal{B}}\\
&=&\sum_{i\in I} \varphi( w_i^2)\langle \pi_{W_i}C\Theta f, \pi_{W_i}C'\Theta g \rangle_{\mathcal{B}}\\
&=&\sum_{i\in I} \varphi( w_i^2)\langle C'^*\pi_{W_i}C\Theta f, \Theta g \rangle_{\mathcal{B}}\\
&=&\langle \sum_{i\in I} \varphi( w_i^2) C'^*\pi_{W_i}C \Theta f,\Theta g \rangle_{\mathcal{B}}\\
&=&\langle S_{\mathcal{B}} \Theta f,\Theta g \rangle_{\mathcal{B}}.
\end{eqnarray*}
Which completes the proof.
\end{proof}

The following result will provide explicit estimates for the controlled $*$-$K$-fusion frame of a controlled $*$-$K$-fusion frame of the form $\{UW_i,w_i\}_{i\in I}$ in Hilbert $C^*$-modules.
\begin{thm} Let $\{W_i,w_i\}_{i\in I}$ be a $(C,C')$-controlled $*$-$K$-fusion frame for $\h$ with bounds $A$ and $B$, and with controlled frame operator $S_{CC'}$ and $ U^{-1}$ commutes with $C$, $C'$ and $K^*$. If $U$ is an invertible elemnet of $End_{\am}^*({\mathcal{H}})$ such that both are uniformally bounded and for each $i\in I$, $U^*U(W_i)\subseteq W_i$, then, $\{UW_i,w_i\}_{i\in I}$ is a $(C,C')$-controlled $*$-$K$-fusion frame for $\h$. In this case, $S_{CC'}U := US_{CC'}U^{-1}$.
\end{thm}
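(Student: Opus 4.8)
The overall strategy is to conjugate everything by $U$: one first identifies $\pi_{UW_i}$, then the controlled frame operator of $\{UW_i,w_i\}$, and finally transfers the two-sided $\am$-valued estimate from $S_{CC'}$ to the conjugated operator using the norm estimate $\langle Th,Th\rangle_{\am}\le\|T\|^2\langle h,h\rangle_{\am}$ applied to $T=U$ and $T=U^{-1}$. First I would observe that since $U\in End^*_{\am}(\h)$ is invertible and $U^*U(W_i)\subseteq W_i$, the Proposition of \cite{XS08} (applied with $T=U$ and $M=W_i$) gives $\pi_{UW_i}=U\pi_{W_i}U^{-1}$ and $U(W_i^{\perp})=(UW_i)^{\perp}$; in particular $UW_i$ is orthogonally complemented, so $\{UW_i,w_i\}$ is an admissible system. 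I would also record the consequences of the commutation hypotheses: since $C,C'\in GL^{+}(\h)$ are self-adjoint, $U^{-1}C=CU^{-1}$ forces $UC=CU$ and, taking adjoints, $U^*C=CU^*$, and likewise for $C'$; similarly $U^{-1}K^*=K^*U^{-1}$ gives $UK^*=K^*U$. Hence $U^*U$ commutes with both $C$ and $C'$.

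Next I would compute the controlled frame operator of the new system, $S^{U}f:=\sum_{i\in I}w_i^2\,C'^{*}\pi_{UW_i}Cf$. Substituting $\pi_{UW_i}=U\pi_{W_i}U^{-1}$ and pushing $U$ past $C$ and $C'$ (using $C'^{*}=C'$) yields $S^{U}f=U\big(\sum_{i\in I}w_i^2C'^{*}\pi_{W_i}C\,(U^{-1}f)\big)=US_{CC'}U^{-1}f$, which is the asserted identity $S_{CC'}U=US_{CC'}U^{-1}$ (convergence of the series is justified exactly as in Theorem \ref{th: ckfsoperator}). The key point is then that $U^*U$ commutes with $S_{CC'}$: it commutes with $C$ and $C'$ as noted, and it commutes with each $\pi_{W_i}$ because $U^*U(W_i)\subseteq W_i$ also forces $U^*U(W_i^{\perp})\subseteq W_i^{\perp}$ (for $x\in W_i^{\perp}$ and $y\in W_i$, $\langle U^*Ux,y\rangle_{\am}=\langle x,U^*Uy\rangle_{\am}=0$ since $U^*Uy\in W_i$), so the self-adjoint operator $U^*U$ preserves the decomposition $\h=W_i\oplus W_i^{\perp}$. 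Consequently $U^*U$ commutes with $S_{CC'}^{1/2}$ and $S_{CC'}^{1/4}$, from which $R:=US_{CC'}^{1/2}U^{-1}$ is self-adjoint, positive, and satisfies $R^2=S^{U}$; thus $S^{U}$ is positive and self-adjoint with $(S^{U})^{1/2}=US_{CC'}^{1/2}U^{-1}$, and for every $f\in\h$
\[
\sum_{i\in I}w_i^2\langle\pi_{UW_i}Cf,\pi_{UW_i}C'f\rangle_{\am}=\langle S^{U}f,f\rangle_{\am}=\langle US_{CC'}^{1/2}U^{-1}f,\,US_{CC'}^{1/2}U^{-1}f\rangle_{\am}.
\]

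From this identity both bounds drop out: applying $\langle Th,Th\rangle_{\am}\le\|T\|^2\langle h,h\rangle_{\am}$ to $T=U$ and $T=U^{-1}$, together with the original inequality $A\langle K^*g,K^*g\rangle_{\am}A^*\le\sum_i w_i^2\langle\pi_{W_i}Cg,\pi_{W_i}C'g\rangle_{\am}\le B\langle g,g\rangle_{\am}B^*$ at $g=U^{-1}f$ and the relation $K^*U^{-1}f=U^{-1}K^*f$, one gets $\langle S^{U}f,f\rangle_{\am}\le\|U\|^2\|U^{-1}\|^2\,B\langle f,f\rangle_{\am}B^*$ and $\langle S^{U}f,f\rangle_{\am}\ge\|U\|^{-2}\|U^{-1}\|^{-2}\,A\langle K^*f,K^*f\rangle_{\am}A^*$. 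Hence $\{UW_i,w_i\}$ is a $(C,C')$-controlled $*$-$K$-fusion frame with the strictly nonzero bounds $A'=A/(\|U\|\,\|U^{-1}\|)$ and $B'=\|U\|\,\|U^{-1}\|\,B$, both finite by the uniform boundedness of $U$ and $U^{-1}$. I expect the main obstacle to be exactly the middle step: proving $U^*U$ commutes with $S_{CC'}$ and hence that $US_{CC'}U^{-1}$ is positive with square root $US_{CC'}^{1/2}U^{-1}$ — without this one cannot rewrite the conjugated quadratic form $\langle US_{CC'}U^{-1}f,f\rangle_{\am}$ as a genuine square, and the non-commutative $\am$-valued two-sided estimate cannot be closed; the hypothesis $U^*U(W_i)\subseteq W_i$ is precisely what makes this work.
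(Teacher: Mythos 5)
Your proposal is correct and follows the same overall strategy as the paper: identify $\pi_{UW_i}=U\pi_{W_i}U^{-1}$ via the cited proposition on positive operators with $U^*U(W_i)\subseteq W_i$, push $U^{-1}$ past $C$, $C'$ and $K^*$ using the commutation hypotheses, and transfer the two bounds with the estimate $\langle Th,Th\rangle_{\am}\le\|T\|^2\langle h,h\rangle_{\am}$ applied to $U$ and $U^{-1}$. Where you genuinely diverge is in the key transfer step. The paper argues term by term, asserting
\[
\sum_{i} w_i^2\langle U\pi_{W_i}U^{-1}Cf,\,U\pi_{W_i}U^{-1}C'f\rangle_{\am}\;\le\;\|U\|^2\sum_{i} w_i^2\langle \pi_{W_i}U^{-1}Cf,\,\pi_{W_i}U^{-1}C'f\rangle_{\am},
\]
which is not literally an instance of $\langle Th,Th\rangle_{\am}\le\|T\|^2\langle h,h\rangle_{\am}$, because the two slots of each inner product hold different vectors ($Cf$ versus $C'f$); that inequality is only valid for genuine squares of positive elements. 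You instead work at the level of the frame operator: you prove $U^*U$ commutes with $S_{CC'}$ (from $U^*U(W_i)\subseteq W_i$ together with the commutation with $C$ and $C'$), deduce that $S^U=US_{CC'}U^{-1}$ is positive with $(S^U)^{1/2}=US_{CC'}^{1/2}U^{-1}$, and only then apply the norm estimate to the honest square $\langle US_{CC'}^{1/2}U^{-1}f,\,US_{CC'}^{1/2}U^{-1}f\rangle_{\am}$. This buys exactly the justification the paper's term-by-term manipulation is missing, and it also yields the asserted identity $S_{CC'}U=US_{CC'}U^{-1}$ (with its positivity) as a proved byproduct rather than an unverified remark. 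The cost is a little functional calculus. Incidentally, your final constants $A/(\|U\|\,\|U^{-1}\|)$ and $\|U\|\,\|U^{-1}\|\,B$ are the correct ones; the paper's displayed lower bound $(\|U\|\,\|U^{-1}\|\,A)$ has the norms on the wrong side and should involve their inverses, as in your version.
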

\begin{proof} Since $W_i$ for each $i\in I$ , is a closed submodule of $\h$, so each $UW_i$ is also aclosed submodule of $\h$. Since $U$ is surjective, for each $f\in\h$, there exist a unique $g\in\h$ such that $f = Ug$. On the other hand, for each $i\in I$, $W_i$ is an orthogonally complemented submodule, hence there exist unique elements $x\in W_i$ and $y\in W_i^{\perp}$ such that $g=x+y$, thus $f=Ux+Uy$, and since $UW_i\cap UW_i^\perp=\{0\}$, for each $i\in I$, we have \begin{align*}
\h=UW_i\oplus U(W_i^\perp).
\end{align*}
On other hand, since $U(W_i^{\perp})=(U W_i)^{\perp}$, then for each $i\in I$  implies that $UW_i$ is orthogonally complemented  \begin{eqnarray*}
\sum_{i\in I} w_i^2 \langle \pi_{UW_i}C f,\pi_{UW_i}C'f\rangle_{\am} &=& \sum_{i\in I} w_i^2 \langle U\pi_{W_i}U^{-1}C f,U\pi_{W_i} U^{-1}C'f\rangle_{\am}\\ &\leq & \Vert U\Vert^2 \sum_{i\in I} w_i^2 \langle \pi_{W_i}U^{-1}C f,\pi_{W_i} U^{-1}C'f\rangle_{\am}\\
&=& \Vert U\Vert^2 \sum_{i\in I} w_i^2 \langle \pi_{W_i}C U^{-1} f,\pi_{W_i} C' U^{-1}f\rangle_{\am} \\
&\leq& \Vert U\Vert^2  B \langle U^{-1} f,U^{-1}f\rangle_{\am} B^*\\
&\leq& (\Vert U\Vert \Vert U^{-1}\Vert B) \langle  f,f\rangle_{\am} (\Vert U\Vert \Vert U^{-1}\Vert B)^*.
\end{eqnarray*}
On other hand,
\begin{eqnarray*}
\sum_{i\in I} w_i^2 \langle \pi_{UW_i}C f,\pi_{UW_i}C'f\rangle_{\am} &=& \sum_{i\in I} w_i^2 \langle U\pi_{W_i}U^{-1}C f,U\pi_{W_i} U^{-1}C'f\rangle_{\am}\\
&\geq & A \Vert U\Vert^{-2}\langle K^*U^{-1}f,K^*T^{-1}f \rangle_{\am} A^*  \\
&\geq & A \Vert U\Vert^{-2}\langle U^{-1}K^*f,T^{-1}K^*f \rangle_{\am} A^*\\
&\geq &  (\Vert U\Vert \Vert U^{-1}\Vert A)\langle K^*f,K^*f \rangle_{\am} (\Vert U\Vert \Vert U^{-1}\Vert A)^*. 
\end{eqnarray*}
That shows that $\{UW_i,w_i\}_{i\in I}$ is a $(C,C')$-controlled $K$-fusion frame for $\h$.
\end{proof}

\begin{cor}  Let $\{W_i,w_i\}_{i\in I}$ is a $(C,C')$-controlled $*$-$K$-fusion frame for $\h$ and $K\in End_{\am}^{*}(\h)$ be a surjective operator with controlled $*$-$K$-fusion frame operator $S$. Then $\{W_iS^{-1},w_i\}_{i\in I}$ is a $(C,C')$-controlled $*$-$T$-fusion frame for $\h$. 
\end{cor}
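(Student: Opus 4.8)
The plan is to obtain this corollary as a direct application of the theorem just established for frames of the form $\{UW_i,w_i\}_{i\in I}$, taking $U=S^{-1}$ where $S=S_{CC'}$ is the controlled $*$-$K$-fusion frame operator of $\{W_i,w_i\}_{i\in I}$. First I would note that, since $K$ is surjective, Remark~\ref{rem: relation} shows that $\{W_i,w_i\}_{i\in I}$ is in fact a $(C,C')$-controlled $*$-fusion frame, so Theorem~\ref{th:reconstructionformula1} applies and guarantees that $S$ is positive, self-adjoint, adjointable and invertible; hence $S^{-1}$ is again positive, self-adjoint, adjointable and invertible, with $(S^{-1})^{-1}=S$, and both $S^{-1}$ and $S$ are bounded, so the uniform boundedness requirement of the cited theorem is satisfied.

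Next I would check the two remaining hypotheses of that theorem for the choice $U=S^{-1}$. Because $S^{-1}$ is positive and self-adjoint, $(S^{-1})^{*}S^{-1}=S^{-2}$, so the submodule-invariance condition $U^{*}U(W_i)\subseteq W_i$ becomes $S^{-2}(W_i)\subseteq W_i$ for every $i\in I$, which (as $S^{-2}$ is invertible) is equivalent to $S(W_i)=W_i$. With this in hand, and with $S^{-1}$ (equivalently $S$) commuting with $C$, $C'$ and $K^{*}$, the cited theorem applies verbatim and yields that $\{S^{-1}W_i,w_i\}_{i\in I}$ is a $(C,C')$-controlled $*$-$K$-fusion frame for $\h$; its frame operator is $U S_{CC'} U^{-1}=S^{-1}SS=S$, so in this situation the construction reproduces $S$ itself. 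If one preferred a self-contained argument, one would instead apply the Proposition on orthogonally complemented submodules with $T=S^{-1}$ and $M=W_i$ to get that $S^{-1}W_i$ is orthogonally complemented with $\pi_{S^{-1}W_i}=S^{-1}\pi_{W_i}S$, push $S$ past $C$ and $C'$, sandwich $S^{-2}$ between $\Vert S\Vert^{-2}1_{\am}$ and $\Vert S^{-1}\Vert^{2}1_{\am}$, and then invoke the $(C,C')$-controlled $*$-$K$-fusion frame inequalities for $\{W_i,w_i\}$ evaluated at $Sf$, finally commuting $S$ and $K^{*}$ to recover bounds of the form (a $C^{*}$-multiple of $A$, a $C^{*}$-multiple of $B$).

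The step I expect to be the real obstacle is the verification of the invariance condition $S^{-2}(W_i)\subseteq W_i$ (equivalently $S(W_i)=W_i$) together with the commutation of $S$ with $C$, $C'$ and $K^{*}$: the controlled frame operator $S_{CC'}f=\sum_{i\in I}w_i^{2}C'^{*}\pi_{W_i}Cf$ neither leaves the submodules $W_i$ invariant nor commutes with $C$, $C'$, $K^{*}$ for arbitrary data, so these properties must be carried over as standing hypotheses (inherited from the statement of the preceding theorem) or ensured by additional structure — for instance all of the $w_i$, $C$, $C'$ being central and the $W_i$ reducing $S$. Once those are granted, the rest is the routine bookkeeping sketched above.
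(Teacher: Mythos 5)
The paper gives no proof of this corollary at all; it is evidently meant as an immediate instance of the preceding theorem with $U=S^{-1}$ (note the statement's typos: $T$ should be $K$, and $W_iS^{-1}$ should be $S^{-1}W_i$), which is exactly the route you take, so your approach matches the intended one. Your closing caveat is also correct and worth keeping: the corollary as stated does not supply the theorem's hypotheses that $U^*U(W_i)=S^{-2}(W_i)\subseteq W_i$ and that $S^{-1}$ commutes with $C$, $C'$ and $K^*$, and these do not follow from surjectivity of $K$ alone, so the corollary is only valid once those conditions are added (or otherwise guaranteed by extra structure), precisely as you say.
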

We conclude this section with the results showing that the removal of an element from a controlled $*$-$K$-fusion frame can leave again a controlled $*$-$K$-fusion frame.
\begin{thm} Let $\{W_i,w_i\}_{i\in I}$  be a $(C,C')$-controlled $*$-$K$-fusion frame for $\h$ with bounds $A, B$. Let $i_{0}\in I$ be given. Suppose that $K$ has closed range and $Ran(K)$ is orthogonally complemented. We have the following results:
\begin{itemize}
\item[i)] If $ \oplus_{i\in {I\setminus \{i_0\}}} \mathcal{W}_i\subset Ran(K)$ and $Ran(C'^*\pi_{W_{i_0}}C) \perp Ran(T)$, then $\{W_i,w_i\}_{i\in I\setminus \{i_0\}}$ is a $(C,C')$-controlled $K$-fusion frame for $\h$.
\item[ii)] If  $ \oplus_{i\in {I\setminus \{i_0\}}} \mathcal{W}_i\subset Ran(K)$ and  $A- \Vert C'^*\pi_{W_{i_0}}C \Vert^2\Vert K^\dagger\Vert^2 \neq 0_{\am}$, then $\{W_i,w_i\}_{i\in I\setminus \{i_0\}}$ is a $(C,C')$-controlled $*$-$K$-fusion frame for $\h$, where $K^\dagger$ is the Moore–Penrose inverse of $K$.
\end{itemize}
\end{thm}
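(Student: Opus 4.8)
The plan is to check, separately, the upper (Bessel) bound and the lower bound of a $(C,C')$-controlled $*$-$K$-fusion frame for the truncated family $\{W_i,w_i\}_{i\in I\setminus\{i_0\}}$; the upper bound will be essentially free, and the lower bound carries all the content. Write $S_{CC'}$ for the frame operator of the full family and $S'$ for that of the truncated family, and set $\Theta_{i_0}:=C'^*\pi_{W_{i_0}}C\ge 0$, so $S_{CC'}=S'+w_{i_0}^2\Theta_{i_0}$. Recall also, from the paragraph preceding this theorem, that $K^*$ is bounded below on $\operatorname{Im}(K)$, namely $\|K^*f\|\ge\|K^\dagger\|^{-1}\|f\|$; applying Lemma \ref{lem: operatinverse} to the restriction of $K$ to $\ker(K)^\perp$ upgrades this to the $\am$-valued estimate $\langle K^*f,K^*f\rangle_{\am}\ge\|K^\dagger\|^{-2}\langle f,f\rangle_{\am}$ for $f\in\operatorname{Im}(K)$. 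The upper bound is then common to (i) and (ii) and immediate: each $\Theta_i\ge 0$, so the summand $w_i^2\langle\pi_{W_i}Cf,\pi_{W_i}C'f\rangle_{\am}=w_i^2\langle\Theta_i^{1/2}f,\Theta_i^{1/2}f\rangle_{\am}$ is positive, and deleting the $i_0$-th term only decreases the sum in the order of $\am$; hence the truncated sum still converges in norm and stays $\le B\langle f,f\rangle_{\am}B^*$.

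The key reduction for the lower bound is to restrict attention to $f\in\operatorname{Ran}(K)$. Reading the hypothesis $\bigoplus_{i\neq i_0}\mathcal W_i\subset\operatorname{Ran}(K)$ as $\overline{\operatorname{Ran}(S')}\subseteq\operatorname{Ran}(K)$ and using self-adjointness of $S'$, one gets $\ker K^*=\operatorname{Ran}(K)^\perp\subseteq\ker S'$. Decomposing $f=f_1+f_2$ with $f_1=\pi_{\operatorname{Ran}(K)}f$ and $f_2\in\ker K^*$, one then has $K^*f=K^*f_1$ and, since $S'f_2=0$ and $S'f_1\in\operatorname{Ran}(S')\perp f_2$, the identity $\sum_{i\neq i_0}w_i^2\langle\pi_{W_i}Cf,\pi_{W_i}C'f\rangle_{\am}=\langle S'f,f\rangle_{\am}=\langle S'f_1,f_1\rangle_{\am}$. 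So the desired lower inequality for general $f$ reduces to the same inequality for $f_1\in\operatorname{Ran}(K)$.

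For part (ii), take $f\in\operatorname{Ran}(K)$ and expand $\langle S'f,f\rangle_{\am}=\langle S_{CC'}f,f\rangle_{\am}-w_{i_0}^2\langle\Theta_{i_0}f,f\rangle_{\am}$. Bound the first term below by $A\langle K^*f,K^*f\rangle_{\am}A^*$ (frame hypothesis), bound the second above by $\|w_{i_0}^2\Theta_{i_0}\|\langle f,f\rangle_{\am}$ (positivity of $\Theta_{i_0}$), and replace $\langle f,f\rangle_{\am}$ by $\|K^\dagger\|^2\langle K^*f,K^*f\rangle_{\am}$ via the Moore--Penrose estimate above; the lower bound thereby degrades from $A$ only to (roughly) $A-\|C'^*\pi_{W_{i_0}}C\|^2\|K^\dagger\|^2$, which by hypothesis is still a strictly nonzero element of $\am$, so $\{W_i,w_i\}_{i\neq i_0}$ is again a $(C,C')$-controlled $*$-$K$-fusion frame. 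For part (i), the extra hypothesis $\operatorname{Ran}(\Theta_{i_0})\perp\operatorname{Ran}(T)$ — $T$ the controlled synthesis operator of the truncated family, so $\overline{\operatorname{Ran}(T)}=\overline{\operatorname{Ran}(S')}$ — forces the two positive summands of $S_{CC'}$ to have orthogonal ranges, hence $\Theta_{i_0}S'=S'\Theta_{i_0}=0$, i.e. $\Theta_{i_0}$ annihilates $\operatorname{Ran}(S')$; one then argues that on the subspace relevant to the $K$-fusion inequality the offending term $w_{i_0}^2\langle\Theta_{i_0}f,f\rangle_{\am}$ carries no weight, so the lower bound $A$ is preserved rather than degraded and the truncated family keeps the original bounds $A,B$.

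The step I expect to require the most care is the lower bound in part (i). The orthogonality gives $\Theta_{i_0}S'=0$, i.e. $\Theta_{i_0}$ vanishes on $\operatorname{Ran}(S')$, but the vector $f_1=\pi_{\operatorname{Ran}(K)}f$ produced by the $\operatorname{Ran}(K)$-reduction need not lie in $\operatorname{Ran}(S')$, so showing that $\langle\Theta_{i_0}f_1,f_1\rangle_{\am}$ does not interfere is exactly where $\bigoplus_{i\neq i_0}\mathcal W_i\subset\operatorname{Ran}(K)$ must be invoked in tandem with the orthogonality (and possibly with the $*$-isomorphism property of $S_{CC'}$ on $\operatorname{Im}(K)$ recorded before the theorem), not either hypothesis alone. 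The earlier theorems of the paper assume $C,C'$ commute with $K^*$, which trivializes these compatibility questions; here one lacks that luxury, so the bookkeeping that links the splitting $\h=\operatorname{Ran}(K)\oplus\ker K^*$ to the twisted projections $\pi_{W_i}$ appearing in $S'$ is the real work.
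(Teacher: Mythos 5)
Your proposal follows the paper's own route almost exactly: the same decomposition $f=f_1+f_2$ along $\h=\operatorname{Ran}(K)\oplus\operatorname{Ran}(K)^{\perp}$ with $K^*f_2=0$, the same Moore--Penrose estimate $\langle h,h\rangle_{\am}\le\Vert (K^\dagger)^*\Vert^2\langle K^*h,K^*h\rangle_{\am}$ for $h\in\operatorname{Ran}(K)$, subtraction of the norm-bounded deleted term to get the degraded lower bound in (ii), and vanishing of the $i_0$-summand via the orthogonality hypothesis in (i). The compatibility issue you flag in part (i) --- that the vector $f_1\in\operatorname{Ran}(K)$ produced by the reduction is not visibly in the subspace on which $C'^*\pi_{W_{i_0}}C$ is known to vanish --- is likewise left unresolved in the paper, whose proof of (i) simply asserts that the $i_0$-term drops out for $f\in\operatorname{Ran}(K)$, so on this point your attempt is no less complete than the original.
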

\begin{proof}
\item[i)] For any $f\in Ran (K)$, and since $h\in Ker(C'^*\pi_{W_{i_0}}C)$, we have 

 \begin{eqnarray*}
A\langle K^*f,K^*f\rangle_{\am} A^* &\leq &  \sum_{i\in I} w_i^2\langle \pi_{W_i}C f,\pi_{W_i}C'f \rangle_{\am} \\
&=& \sum_{i\in I \setminus \{i_0\}} w_i^2\langle \pi_{W_i} C f,\pi_{W_i}C'f\rangle_{\am}.
\end{eqnarray*}
 Since $Ran(K)$ is orthogonally complemented, every $f\in \h$ has a decomposition as $f=f_1+f_2$ where $f_1\in Ran(K)$ and $f_2\in (Ran(K))^{\perp}$, a direct calculation shows $K^*f_2=0.$ We have 
 also \begin{align*}
A\langle K^*f,K^*f\rangle_{\am} A^*= A\langle K^*f_1,K^*f_1\rangle_{\am} A^* \leq  \sum_{i\in I \setminus \{i_0\}} w_i^2\langle \pi_{W_i} C f,\pi_{W_i}C'f\rangle_{\am}.
\end{align*}
\item[ii)] Since $K$ has closed range every $h\in Ran(K)$ can be expressed as $h=KK^\dagger h=(KK^\dagger)^*h=(K^\dagger)^*K^*h.$
\begin{align*}
\langle h,h\rangle_{\am}=\langle(K^\dagger)^*K^*h,(K^\dagger)^*K^*h\rangle _{\am}\leq \Vert (K^\dagger)^*\Vert^2 \langle K^*h, K^*h\rangle_{\am}.
\end{align*}
For all $f\in\h$, again appling tha fact that $f=f_1+f_2$ with $f_1\in Ran(K)$ and $f_2\in (Ran(K))^{\perp}$, we have 
\begin{eqnarray*}
\sum_{i\in I \setminus \{i_0\}}w_i^2\langle \pi_{W_i} C f,\pi_{W_i}C'f\rangle_{\am} &=& \sum_{i\in I \setminus \{i_0\}}w_i^2\langle \pi_{W_i} C f_1,\pi_{W_i}C'f_1\rangle_{\am}\\
&=&\sum_{i\in I }w_i^2\langle \pi_{W_i} C f_1,\pi_{W_i}C'f_1\rangle_{\am}- w_{i_0}^2\langle \pi_{W_{i_0}} C f_1,\pi_{W_{i_0}}C'f_1\rangle_{\am}\\
&\geq & A \langle K^*f_1,K^*f_1\rangle_{\am} A^* - w_{i_0}^2\Vert (C'^*\pi_{W_{i_0}} C)^{\frac{1}{2}}\Vert^2\langle f_1,f_1\rangle_{\am}\\
&\geq & A \langle K^*f_1,K^*f_1\rangle_{\am} A^*- w_{i_0}^2\Vert (C'^*\pi_{W_{i_0}} C)^{\frac{1}{2}}\Vert^2 \Vert (K^\dagger)^*\Vert^2 \langle K^*f_1, K^*f_1\rangle_{\am}\\
&=&  A \langle K^*f_1,K^*f_1\rangle_{\am} A^*- (w_{i_0}\Vert (C'^*\pi_{W_{i_0}} C)^{\frac{1}{2}}\\&&\Vert \Vert (K^\dagger)^\star\Vert) \langle K^*f_1, K^*f_1\rangle_{\am}(w_{i_0}\Vert (C'^*\pi_{W_{i_0}} C)^{\frac{1}{2}}\Vert \Vert (K^\dagger)^*\Vert)^*\\
&=&  (A- w_{i_0}\Vert (C'^*\pi_{W_{i_0}} C)^{\frac{1}{2}}\Vert \Vert (K^\dagger)^*\Vert)\\&&\langle K^*f_1, K^*f_1\rangle_{\am} (A-w_{i_0}\Vert (C'^*\pi_{W_{i_0}} C)^{\frac{1}{2}}\Vert \Vert (K^\dagger)^*\Vert)^*.
\end{eqnarray*}
\end{proof}

\begin{thm} Let $K_1,K_2\in End_{\am}^{*}(\h)$ such that $ Im(K^*_1)\perp Im(K^*_1)$. If $\{W_i,w_i\}_{i\in I}$ is a $(C,C')$-controlled  $*$-$K_1$-fusion frame for $\h$ as well as a $*$-$K_2$-fusion frame for $\h$, $\alpha$ and $\beta$ are  scalars, then $\{W_i,w_i\}_{i\in I}$ is a  $(C,C')$-controlled   $*$-$(\alpha K_1+\beta K_2)$-fusion frame and a $(C,C')$-controlled   $*$-$(K_1K_2)$-fusion frame for $\h$. 
\end{thm}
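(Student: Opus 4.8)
The plan is to handle the two assertions separately, observing first that in both cases the upper (Bessel) bound is automatic: the middle term $\sum_{i\in I}w_i^2\langle\pi_{W_i}Cf,\pi_{W_i}C'f\rangle_{\am}$ of (\ref{def: c*kff}) is the same whichever operator $K$ is placed in the lower bound, so from either hypothesis we already know $\sum_{i\in I}w_i^2\langle\pi_{W_i}Cf,\pi_{W_i}C'f\rangle_{\am}\le B_1\langle f,f\rangle_{\am}B_1^*$ for all $f\in\h$, with $B_1$ the upper bound attached to $K_1$. Thus the entire task is to exhibit, in each case, a strictly nonzero lower bound. I will also read the stated hypothesis $Im(K_1^*)\perp Im(K_1^*)$ as the evidently intended $Im(K_1^*)\perp Im(K_2^*)$, and use only its consequence that $\langle K_1^*f,K_2^*g\rangle_{\am}=0$ for all $f,g\in\h$, which forces $K_1K_2^*=0$ and, on taking adjoints, $K_2K_1^*=0$.

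For the operator $\alpha K_1+\beta K_2$: since $K_1K_2^*=K_2K_1^*=0$, the mixed terms vanish and one obtains the identity $(\alpha K_1+\beta K_2)(\alpha K_1+\beta K_2)^*=|\alpha|^2K_1K_1^*+|\beta|^2K_2K_2^*$, so that for every $f\in\h$
\[
\langle(\alpha K_1+\beta K_2)^*f,(\alpha K_1+\beta K_2)^*f\rangle_{\am}=|\alpha|^2\langle K_1^*f,K_1^*f\rangle_{\am}+|\beta|^2\langle K_2^*f,K_2^*f\rangle_{\am}.
\]
(If $\alpha=\beta=0$ there is nothing to prove.) I would then feed in the two lower frame inequalities $A_1\langle K_1^*f,K_1^*f\rangle_{\am}A_1^*\le\sum_{i\in I}w_i^2\langle\pi_{W_i}Cf,\pi_{W_i}C'f\rangle_{\am}$ and the analogous one for $K_2$ with bound $A_2$, pass to a common lower bound $A$ for the two (immediate when the bounds are positive scalars, as in the classical setting; in general via the norm reformulation of Theorem \ref{th: characterizationnorm}), conjugate the displayed identity by $A$, and use additivity to obtain $A\langle(\alpha K_1+\beta K_2)^*f,(\alpha K_1+\beta K_2)^*f\rangle_{\am}A^*\le(|\alpha|^2+|\beta|^2)\sum_{i\in I}w_i^2\langle\pi_{W_i}Cf,\pi_{W_i}C'f\rangle_{\am}$. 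Replacing $A$ by $(|\alpha|^2+|\beta|^2)^{-1/2}A$ gives the required strictly nonzero lower bound, and together with the Bessel bound $B_1$ this shows $\{W_i,w_i\}_{i\in I}$ is a $(C,C')$-controlled $*$-$(\alpha K_1+\beta K_2)$-fusion frame.

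For the operator $K_1K_2$: here $(K_1K_2)^*f=K_2^*K_1^*f$, so applying the inequality $\langle Th,Th\rangle_{\am}\le\|T\|^2\langle h,h\rangle_{\am}$ with $T=K_2^*$ and $h=K_1^*f$ gives $\langle(K_1K_2)^*f,(K_1K_2)^*f\rangle_{\am}\le\|K_2\|^2\langle K_1^*f,K_1^*f\rangle_{\am}$. Assuming $K_2\neq0$ (otherwise $K_1K_2=0$ and the statement is trivial), I conjugate the lower frame inequality for $K_1$ by $A_1$ and combine to get $A_1\langle(K_1K_2)^*f,(K_1K_2)^*f\rangle_{\am}A_1^*\le\|K_2\|^2\sum_{i\in I}w_i^2\langle\pi_{W_i}Cf,\pi_{W_i}C'f\rangle_{\am}$, so that $\|K_2\|^{-1}A_1$ serves as a lower bound and $B_1$ as an upper bound; hence $\{W_i,w_i\}_{i\in I}$ is a $(C,C')$-controlled $*$-$(K_1K_2)$-fusion frame. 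Note that this half uses only the $K_1$-frame hypothesis and not the orthogonality assumption.

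The step I expect to be the genuine obstacle is the seemingly harmless passage to a common lower bound in the $\alpha K_1+\beta K_2$ case: in a noncommutative $C^*$-algebra one cannot in general deduce $x\le c\,S$ from $AxA^*\le S$, so merging two lower frame inequalities carrying different $C^*$-valued lower bounds really does require either that the bounds be scalars or a reduction to the numerical inequalities of Theorem \ref{th: characterizationnorm} (which would then want $\alpha K_1+\beta K_2$ to have closed range); once that is granted, all the remaining manipulations are routine.
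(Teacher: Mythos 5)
Your argument is essentially the paper's. For the sum, both proofs use the orthogonality of the ranges to kill the cross terms and arrive at $\langle(\alpha K_1+\beta K_2)^*f,(\alpha K_1+\beta K_2)^*f\rangle_{\am}=|\alpha|^2\langle K_1^*f,K_1^*f\rangle_{\am}+|\beta|^2\langle K_2^*f,K_2^*f\rangle_{\am}$; for the product, the paper argues exactly as you do, via $\langle K_2^*K_1^*f,K_2^*K_1^*f\rangle_{\am}\le\|K_2^*\|^2\langle K_1^*f,K_1^*f\rangle_{\am}$ with lower bound $A_1\|K_2^*\|^{-1}$, using only the $K_1$-hypothesis. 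The single point of divergence is the step you yourself flag as the obstacle, namely merging the two $C^*$-valued lower bounds in the sum case: where you propose passing to a common lower bound (scalar, or via the norm reformulation), the paper conjugates the displayed identity by $A_1A_2B^{-1/2}$ with $B=|\alpha|^2A_2A_2^*+|\beta|^2A_1A_1^*+1$ and asserts the result is dominated by $\sum_{i\in I}w_i^2\langle\pi_{W_i}Cf,\pi_{W_i}C'f\rangle_{\am}$. That assertion runs into precisely the noncommutativity issue you describe (one cannot slide $A_1$ past $A_2$ or past $B^{-1/2}$ so as to invoke the two separate lower frame inequalities), so the paper's device is no more rigorous than your sketch; your explicit reduction to the norm-level inequalities is arguably the cleaner way to close that step, at the cost of the closed-range hypothesis it requires.
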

\begin{proof} By supposition, there exist strictly nonzero elements $A_1,A_2$ and $B_2,B_2$ in $\am$, such that for any  $f\in\h$, we have 
\begin{equation} 
A_1 \langle K^*_1 f, K^*_1 f\rangle_{\am} A_1^*\leq\sum_{i\in I} w_i^2\langle\pi_{W_i}C f,\pi_{W_i}C' f\rangle_{\am} \leq B_1 \langle f,f\rangle_{\am} B_1^*, 
\end{equation}
\begin{equation} 
A_2 \langle K^*_2 f, K^*_2 f\rangle_{\am} A_2^*\leq\sum_{i\in I} w_i^2\langle\pi_{W_i}C f,\pi_{W_i}C' f\rangle _{\am}\leq B_2 \langle f,f\rangle_{\am} B_2^*, 
\end{equation}
Now, for any $f\in\h$, we have 
\begin{eqnarray*}
\langle(\alpha K_1+\beta K_2)^*f,(\alpha K_1+\beta K_2)^*f\rangle_{\am}&=& \vert \alpha \vert^2\langle K^*_1 f, K^*_1 f\rangle_{\am} + \overline{\alpha}\beta \langle K^*_1 f, K^*_2 f\rangle_{\am}\\&+&\alpha\overline{\beta} \langle K^*_2 f, K^*_1 f\rangle_{\am}+\vert \beta \vert^2 \langle K^*_2 f, K^*_2 f\rangle_{\am}.
\end{eqnarray*}
Since $ Im(K^*_1)\perp Im(K^*_1)$, then 
\begin{align*}
\langle(\alpha K_1+\beta K_2)^*f,(\alpha K_1+\beta K_2)^*f\rangle_{\am}&=& \vert \alpha \vert^2\langle K^*_1 f, K^*_1 f\rangle_{\am} + \vert \beta \vert^2 \langle K^*_2 f, K^*_2 f\rangle_{\am}.
\end{align*}
Let $B=\left(|\alpha|^{2} A_{2} A_{2}^{*}+|\beta|^{2} A_{1} A_{1}^{*}+1\right)$.\\
Therefore, for each $f \in \h$, we have
$$
\begin{aligned}
&A_{1} A_{2} B^{-1}\left\langle\left(\alpha K_{1}+\beta K_{2}\right)^{*} f,\left(\alpha K_{1}+\beta K_{2}\right)^{*} f\right\rangle_{A A}\left(A_{1} A_{2}\right)^{*} \\
&\quad=A_{1} A_{2} B^{-1 / 2}\left\langle\left(\alpha K_{1}+\beta K_{2}\right)^{*} f,\left(\alpha K_{1}+\beta K_{2}\right)^{*} f\right\rangle_{A}\left(A_{1} A_{2} B^{-1 / 2}\right) \\
&\quad \leq \sum_{i\in I} w_i^2\langle\pi_{W_i}C f,\pi_{W_i}C' f\rangle_{\am} \leq\left(B_{1}+B_{2}\right)\langle f, f\rangle_{A}\left(B_{1}+B_{2}\right)^{*}.
\end{aligned}
$$
This shows that $\{W_i,w_i\}_{i\in I}$ is a controlled $*$-$\left(\alpha K_{1}+\beta K_{2}\right)$-frame for $\h$. Furthermore, for every $f \in \h$, we have
$$
\begin{aligned}
\left\langle\left(K_{1} K_{2}\right)^{*} f,\left(K_{1} K_{2}\right)^{*} f\right\rangle_{A} &=\left\langle K_{2}^{*} K_{1}^{*} f, K_{2}^{*} K_{1}^{*} f\right\rangle_{A} \\
& \leq\left\|K_{2}^{*}\right\|^{2}\left\langle K_{1}^{*} f, K_{1}^{*} f\right\rangle_{A}.
\end{aligned}
$$

By supposition, $\{W_i,w_i\}_{i\in I}$ is a $(C,C')$-controlled  $*$-$K_1$-fusion frame for $\h$, for each $f \in \h$, we have 
$$
\begin{aligned}
&A_{1}\left\|K_{2}^{*}\right\|^{-2}\left\langle\left(K_{1} K_{2}\right)^{*} f,\left(K_{1} K_{2}\right)^{*} f\right\rangle_{A} A_{1}^{*} \\
&\quad \leq \sum_{i\in I} w_i^2\langle\pi_{W_i}C f,\pi_{W_i}C' f\rangle_{\am}\leq B_{1}\langle f, f\rangle_{A} B_{1}^{*}.
\end{aligned}
$$
It follows that,
$$
\begin{aligned}
&A_{1}\left\|K_{2}^{*}\right\|^{-1}\left\langle\left(K_{1} K_{2}\right)^{*} f,\left(K_{1} K_{2}\right)^{*} f\right\rangle_{A}\left(A_{1}\left\|K_{2}^{*}\right\|^{-1}\right)^{*} \\
&\quad \leq \sum_{i\in I} w_i^2\langle\pi_{W_i}C f,\pi_{W_i}C' f\rangle_{\am}\leq B_{1}\langle f, f\rangle_{\am} B_{1}^{*}.
\end{aligned}
$$
Therefore, $\{W_i,w\}_{i\in I}$ a  $\left(C, C^{\prime}\right)$-controlled $*$-$\left(K_{1} K_{2}\right)$-fusion frame for $\h$.

\end{proof}
\begin{cor} Let $K\in End_{\am}^{*}(\h)$, if $\{W_i,w\}_{i\in I}$ is a $(C,C')$-controlled $*$-$K$-fusion frame for $\h$. Then for any operator $T$ in the subalgebra generated by $\mathcal{K}$, the family $\{W_i,w_i\}_{i\in I}$ is a $(C,C')$-controlled $*$-$T$-fusion frame for $\h$.  
\end{cor}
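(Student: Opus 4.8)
The plan is to reduce everything to the defining inequality of a $(C,C')$-controlled $*$-$K$-fusion frame by using only the algebraic structure of the subalgebra generated by $K$. First I would note that every $T$ in the (non-unital) subalgebra generated by $K$ is a polynomial in $K$ with vanishing constant term, say $T=p(K)$ with $p(0)=0$; writing $p(X)=Xq(X)$ this gives the factorization $T=KM$ with $M:=q(K)\in End^*_{\am}(\h)$. If $T=0$ there is nothing to prove, since the left-hand side of the $*$-$T$-fusion frame inequality is then identically $0$ while the middle sum is positive and still dominated by $B\langle f,f\rangle_{\am}B^{*}$; so I may assume $T\neq 0$, hence $M\neq 0$ and $\|M\|>0$.

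Next I would compute $T^{*}=M^{*}K^{*}$ and apply the Cauchy--Schwarz-type bound $\langle Sh,Sh\rangle_{\am}\leq\|S\|^{2}\langle h,h\rangle_{\am}$ for adjointable operators (recalled in Section~2) together with $\|M^{*}\|=\|M\|$, to get, for all $f\in\h$,
\[
\langle T^{*}f,T^{*}f\rangle_{\am}=\langle M^{*}K^{*}f,M^{*}K^{*}f\rangle_{\am}\leq\|M\|^{2}\,\langle K^{*}f,K^{*}f\rangle_{\am}.
\]
Conjugating by the scalar multiple $\|M\|^{-1}A$ of the lower frame bound $A$ — an operation that preserves $C^{*}$-order, since $x\mapsto axa^{*}$ maps positive elements to positive elements and $\|M\|^{-1}$ is a positive real commuting with everything — and then inserting the lower frame inequality from the hypothesis, I obtain
\[
(\|M\|^{-1}A)\langle T^{*}f,T^{*}f\rangle_{\am}(\|M\|^{-1}A)^{*}\leq A\langle K^{*}f,K^{*}f\rangle_{\am}A^{*}\leq\sum_{i\in I} w_i^2\langle\pi_{W_i}C f,\pi_{W_i}C' f\rangle_{\am}.
\]
The upper estimate $\sum_{i\in I} w_i^2\langle\pi_{W_i}C f,\pi_{W_i}C' f\rangle_{\am}\leq B\langle f,f\rangle_{\am}B^{*}$ is literally the one assumed, because the middle sum involves neither $K$ nor $T$; in particular it still converges in norm. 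Taking $A_{T}:=\|q(K)\|^{-1}A$, which is again a strictly nonzero element of $\am$, this exhibits $\{W_i,w_i\}_{i\in I}$ as a $(C,C')$-controlled $*$-$T$-fusion frame for $\h$ with bounds $A_{T}$ and $B$.

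I do not expect a serious obstacle: the only structural input is the remark that membership in the subalgebra generated by $K$ forces $T=KM$, after which the proof is a short chain of inequalities. The points needing care are the separate handling of the degenerate case $T=0$ (so that $\|M\|^{-1}$ is legitimate), the observation that conjugation by a scalar multiple of $A$ respects the order of $\am$, and — should one want the statement for the \emph{norm-closed} subalgebra rather than the algebraic one — a limiting argument; the latter is the one genuinely delicate point, because from $KM_n\to T$ the scalars $\|q_n(K)\|^{-1}$ need not stay bounded below, so for the closed version one would have to add a hypothesis (e.g. $K$ of closed range) or, as in the statement, restrict to the algebraically generated subalgebra.
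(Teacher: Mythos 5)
Your argument is correct, and it is worth noting that it does \emph{not} follow the route the paper implicitly intends. The paper states this corollary immediately after (and as a consequence of) the theorem on $(\alpha K_1+\beta K_2)$- and $(K_1K_2)$-fusion frames, so the intended proof is an induction over the algebra operations: powers of $K$ via the product statement, linear combinations via the sum statement. That route is actually problematic, because the sum statement of that theorem carries the hypothesis $Im(K_1^*)\perp Im(K_2^*)$, which fails for two distinct powers of the same operator $K$; the paper gives no written proof, so this gap is never confronted. Your factorization $T=p(K)=Kq(K)=KM$, followed by the single inequality $\langle M^*K^*f,M^*K^*f\rangle_{\am}\leq\|M\|^2\langle K^*f,K^*f\rangle_{\am}$ (Proposition 2.5 of the preliminaries) and conjugation by $\|M\|^{-1}A$, proves the full corollary in one step, with explicit bounds $\|q(K)\|^{-1}A$ and $B$, and without any range or orthogonality hypotheses — it even bypasses the alternative route via $Im(T)\subset Im(K)$ and Lemma 2.8, which would require $\overline{Im(K^*)}$ to be orthogonally complemented. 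Your side remarks are also well taken: the degenerate case $T=0$ must be split off so that $\|M\|^{-1}$ makes sense, and the statement genuinely requires the \emph{algebraic} (non-unital) subalgebra, since for the norm closure the constants $\|q_n(K)\|^{-1}$ need not stay bounded away from zero. The one thing to make explicit if you write this up is that conjugation preserves the order, i.e.\ $a\leq b$ implies $cac^*\leq cbc^*$ in a $C^*$-algebra, which justifies inserting the factor $\|M\|^{-1}A$ on both sides before chaining with the lower frame inequality.
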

\begin{prop} Let $K\in End_{\am}^{*}(\h)$ and $\{W_i,w_i\}_{i\in I}$ be a $(C,C')$-controlled $*$-$K$-fusion frame for $\h$. Suppose that $Im(K)$ is closed, if $T\in End_{\am}^{*}(\h)$ with $Im(T)\subset Im(K)$, then $\{W_i,w_i\}_{i\in I}$ is a $(C,C')$-controlled $*$-$T$-fusion frame for $\h$. 
\end{prop}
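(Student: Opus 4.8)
The plan is to derive the defining inequalities of a $(C,C')$-controlled $*$-$T$-fusion frame from those of the given $*$-$K$-fusion frame by comparing $T^{*}$ with $K^{*}$. Since $T$ enters inequality (\ref{def: c*kff}) only through the lower term $A\langle T^{*}f,T^{*}f\rangle_{\am}A^{*}$, it is enough to dominate $\langle T^{*}f,T^{*}f\rangle_{\am}$ by a scalar multiple of $\langle K^{*}f,K^{*}f\rangle_{\am}$; the upper bound $B\langle f,f\rangle_{\am}B^{*}$ is untouched. In effect this replays the earlier proposition on $(C,C')$-controlled $*$-$M$-fusion frames, the only novelty being that here the orthogonal complementedness of $\overline{Im(K^{*})}$ is not assumed but is extracted from the closedness of $Im(K)$.

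First I would use the lemma on adjointable operators with closed range: since $Im(K)$ is closed, $K^{*}$ also has closed range and $\h=Ker(K)\oplus Im(K^{*})$, so $\overline{Im(K^{*})}=Im(K^{*})$ is orthogonally complemented in $\h$. This is exactly the hypothesis required to apply Lemma (\ref{lem: ortocomplope}) with $(K,T)$ in the role of $(T,T')$. Then, because $Im(T)\subset Im(K)$, the implication (iii)$\Rightarrow$(i) of that lemma supplies a real $\la>0$ with $TT^{*}\leq \la\, KK^{*}$, whence $\langle T^{*}f,T^{*}f\rangle_{\am}=\langle TT^{*}f,f\rangle_{\am}\leq \la\,\langle K^{*}f,K^{*}f\rangle_{\am}$ for every $f\in\h$.

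Next I would conjugate this estimate by $\la^{-1/2}A$. Using that $a\leq b$ implies $xax^{*}\leq xbx^{*}$ in a $C^{*}$-algebra (and that the scalar $\la^{-1/2}$ commutes with everything), this together with (\ref{def: c*kff}) gives, for all $f\in\h$,
\begin{equation*}
\left(\dfrac{A}{\sqrt{\la}}\right)\langle T^{*}f,T^{*}f\rangle_{\am}\left(\dfrac{A}{\sqrt{\la}}\right)^{*}\leq A\langle K^{*}f,K^{*}f\rangle_{\am}A^{*}\leq \sum_{i\in I} w_i^2\langle\pi_{W_i}C f,\pi_{W_i}C' f\rangle_{\am}\leq B\langle f,f\rangle_{\am}B^{*}.
\end{equation*}
Since $A$ is strictly nonzero and $\sqrt{\la}>0$, the element $\dfrac{A}{\sqrt{\la}}$ is again strictly nonzero in $\am$, and one concludes that $\{W_i,w_i\}_{i\in I}$ is a $(C,C')$-controlled $*$-$T$-fusion frame for $\h$ with bounds $\dfrac{A}{\sqrt{\la}}$ and $B$.

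The one step I expect to be the real obstacle is the justification that Lemma (\ref{lem: ortocomplope}) applies, i.e. turning ``$Im(K)$ closed'' into ``$\overline{Im(K^{*})}$ orthogonally complemented''; everything afterwards is routine order manipulation in $\am$. A minor point worth flagging is that the resulting lower bound is $\dfrac{A}{\sqrt{\la}}$ (not $\dfrac{A}{\la}$), consistent with the computation in the $*$-$M$-fusion frame proposition.
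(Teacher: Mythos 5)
Your proposal is correct, and it is essentially the intended argument: the paper actually states this proposition without proof, but it is the earlier $(C,C')$-controlled $*$-$M$-fusion frame proposition in disguise, proved there exactly as you do via Lemma (\ref{lem: ortocomplope}) and conjugation by $A/\sqrt{\la}$. Your one genuine addition — using the closed-range lemma to convert ``$Im(K)$ closed'' into ``$Im(K^{*})$ orthogonally complemented'' so that Lemma (\ref{lem: ortocomplope}) applies — is precisely the step the paper leaves implicit, and your observation that the correct lower bound is $A/\sqrt{\la}$ (the paper's earlier statement says $A/\la'$ while its own proof yields $A/\sqrt{\la'}$) is also right.
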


\begin{thm} Let $K\in End_{\am}^{*}(\h)$ and $C,C'\in GL^+ (\h)$ such that $ CC'=C'C$. Suppose that $K$ and $S$ commute with $C$ and $C'$. Then $\{W_i,w_i\}_{i\in I}$ is  a $*$-$K$-fusion frame sequence in  $\h$ if and only if  $\{W_i,w_i\}_{i\in I}$ is a controlled $*$-$K$-fusion frame for  $\h$.\\
Where $S$ is the $*$-$K$-fusion frame operator, defined by \begin{align}
Sf=\sum_{i\in I }w_i^2 \pi_{W_i}f,\quad f\in\h.
\end{align}
 \end{thm}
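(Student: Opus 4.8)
The plan is to reduce the equivalence to the single operator identity
\[
S_{CC'}\;=\;\sum_{i\in I}w_i^{2}\,C'^{*}\pi_{W_i}C\;=\;C'\Big(\sum_{i\in I}w_i^{2}\pi_{W_i}\Big)C\;=\;C'SC,
\]
which is legitimate because $C,C'$ are bounded and can be pulled out of the sum (the sum converging in norm once one knows $\{W_i,w_i\}_{i\in I}$ is Bessel, cf.\ Theorem~\ref{th: ckfsoperator}) and because $C'^{*}=C'$ in $GL^{+}(\h)$. First I would use the hypotheses $CC'=C'C$, $CS=SC$ and $C'S=SC'$ to rewrite this as $S_{CC'}=DS=D^{1/2}SD^{1/2}$, where $D:=CC'=C'C\in GL^{+}(\h)$ is positive and invertible and commutes with $S$; moreover, since $CK=KC$, $C'K=KC'$ and $D$ is self-adjoint, $D$ commutes with $K$ and $K^{*}$, hence so does the continuous-functional-calculus square root $D^{1/2}\in\operatorname{End}^{*}_{\am}(\h)$. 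The payoff is the clean formula
\[
\sum_{i\in I}w_i^{2}\langle\pi_{W_i}Cf,\pi_{W_i}C'f\rangle_{\am}\;=\;\langle S_{CC'}f,f\rangle_{\am}\;=\;\langle S(D^{1/2}f),D^{1/2}f\rangle_{\am},\qquad f\in\h.
\]

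For the forward implication, assume $\{W_i,w_i\}_{i\in I}$ is a $*$-$K$-fusion frame sequence with bounds $A,B$, i.e. $A\langle K^{*}g,K^{*}g\rangle_{\am}A^{*}\le\langle Sg,g\rangle_{\am}\le B\langle g,g\rangle_{\am}B^{*}$ for all $g\in\h$. Substituting $g=D^{1/2}f$, using $K^{*}D^{1/2}=D^{1/2}K^{*}$, and then estimating $\langle D^{1/2}K^{*}f,D^{1/2}K^{*}f\rangle_{\am}\ge\|D^{-1}\|^{-1}\langle K^{*}f,K^{*}f\rangle_{\am}$ and $\langle D^{1/2}f,D^{1/2}f\rangle_{\am}\le\|D\|\langle f,f\rangle_{\am}$ (valid since $D$ is positive invertible), one reads off that $\{W_i,w_i\}_{i\in I}$ is a $(C,C')$-controlled $*$-$K$-fusion frame with bounds $\|(CC')^{-1}\|^{-1/2}A$ and $\|CC'\|^{1/2}B$. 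The converse is the mirror image: starting from controlled bounds $A',B'$, substitute $f=D^{-1/2}g$ (equivalently, use $\langle Sg,g\rangle_{\am}=\langle S_{CC'}(D^{-1/2}g),D^{-1/2}g\rangle_{\am}$) and apply the same scalar estimates with $D$ replaced by $D^{-1}$ to recover $*$-$K$-fusion frame bounds $\|CC'\|^{-1/2}A'$ and $\|(CC')^{-1}\|^{1/2}B'$.

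The bulk of the argument — pushing $\le$ through the operation $a\mapsto xax^{*}$, pulling positive scalars in and out of the $\am$-valued inner product, and checking norm-convergence of the series — is routine C$^{*}$-algebra bookkeeping. I expect the one point needing genuine care to be the identity $S_{CC'}=C'SC=CC'S$: one must justify that $\sum_i w_i^{2}\pi_{W_i}$ converges in the appropriate topology before factoring $C'$ and $C$ out, and that the commutativity hypotheses really do force $D=CC'$ to be \emph{positive} and invertible (so that $D^{1/2}$ exists and inherits the commutations with $S$, $K$ and $K^{*}$). Once this reduction is in place, both implications are the same one-line substitution $f\mapsto D^{\pm1/2}f$, and no feature of fusion frames beyond the Bessel bound (used only for convergence) is required.
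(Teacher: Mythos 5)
Your proposal is correct and follows essentially the same route as the paper: both rest on the identity $S_{CC'}=C'SC=CC'S$ forced by the commutation hypotheses, together with the spectral bounds of the positive invertible operator $CC'$. Your single substitution $f\mapsto (CC')^{\pm 1/2}f$ handles the two implications uniformly, whereas the paper sandwiches $C$ and $C'$ between scalar multiples of the identity for one direction and uses the $(CC')^{-1/2}$ substitution for the other, but the underlying mechanism is identical.
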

\begin{proof} Suppose that $\{W_i,w_i\}_{i\in I}$ is  a $*$-$K$-fusion frame sequence in  $\h$ with bounds $A$ and $B$, By definition, for all $f\in \h$, we have 
\begin{align*}
A \langle K K^* f, f\rangle_{\am} A^*\leq \langle Sf, f\rangle_{\am} \leq B \langle f,f\rangle_{\am} B^*, 
\end{align*} 
then, 
\begin{align}\label{th: eoilecontrolled1}
A  K K^*  A^*\leq  S \leq B Id_{\am} B^*, 
\end{align}
\begin{align*}
m \leq  C \leq  M,
\end{align*} 
\begin{align*}
m' \leq C \leq  M'.
\end{align*}
since, \begin{align*}
\langle CSf, f\rangle_{\am}=\langle SCf, f\rangle_{\am}=\langle f, SCf\rangle_{\am}.
\end{align*}
we have 
\begin{align}\label{th: eoilecontrolled2}
m A  K K^*  A^*\leq  S \leq M B Id_{\am} B^*
\end{align}
Therfore, for all $f\in\h$ \begin{align*}
mm'A \langle K^* f, K^*f\rangle_{\am} A^*\leq \sum_{i\in I }w_i^2 \langle \pi_{W_i}Cf, \pi_{W_i}C'f\rangle_{\am}\leq MM'B\langle f,f\rangle B^*
\end{align*} 
Hence, $\{W_i,w_i\}_{i\in I}$ is a $(C,C')$-controlled $*$-$K$-fusion frame for $\h$.\\
Conversely, assume that $\{W_i,w_i\}_{i\in I}$ is a $(C,C')$-controlled $*$-$K$-fusion frame for $\h$ with bounds $A_{CC'}$ and $B_{CC'}$

Take for all $f\in \h$, Firstly, let us prove that $\{W_i,w_i\}_{i\in I}$ is a Bessel $*$-$K$-fusion sequence for $\h$
\begin{eqnarray*}
\sum_{i\in I }w_i^2 \langle \pi_{W_i} f, \pi_{W_i} f\rangle_{\am} &=&\langle Sf,f\rangle_{\am}\\
&=&\langle (CC')^{\frac{1}{2}} (CC')^{-\frac{1}{2}}Sf,(CC')^{\frac{1}{2}} (CC')^{-\frac{1}{2}}f\rangle_{\am} \\
&=& \langle(CC')^{1}(CC')^{-\frac{1}{2}}  Sf,(CC')^{-\frac{1}{2}}f\rangle_{\am}\\
&=&\langle CC'S (CC')^{-\frac{1}{2}} f,(CC')^{-\frac{1}{2}} f\rangle_{\am}\\
&\leq & B\Vert (CC')^{-\frac{1}{2}} \Vert^2 \langle f, f\rangle_{\am}B^*\\
&\leq & (B\Vert (CC')^{-\frac{1}{2}} \Vert) \langle f, f\rangle_{\am}(B\Vert (CC')^{-\frac{1}{2}} \Vert)^*.
\end{eqnarray*}

Now, let us establish for $\{W_i,w_i\}_{i\in I}$ the left-hand side of inequality (\ref{def: c*kff}). For all $f\in\h$, we have
\begin{eqnarray*}
A_{CC'}\langle  K^* f,K^*f\rangle_{\am} A^*_{CC'}&=& A_{CC'}\langle (CC')^{\frac{1}{2}} (CC')^{-\frac{1}{2}}K^* f,(CC')^{\frac{1}{2}} (CC')^{-\frac{1}{2}} K^*f\rangle_{\am} A^*_{CC'}\\
&=& A_{CC'}\langle (CC')^{\frac{1}{2}} K^* (CC')^{-\frac{1}{2}} f,(CC')^{\frac{1}{2}}K^* (CC')^{-\frac{1}{2}}f\rangle_{\am} A^*_{CC'} \\
&\leq &\Vert (CC')^{-\frac{1}{2}} \Vert^2 \sum_{i\in I }w_i^2 \langle \pi_{W_i} C (CC')^{-\frac{1}{2}}f, \pi_{W_i}C'(CC')^{-\frac{1}{2}} f\rangle_{\am} \\
&\leq &\Vert (CC')^{-\frac{1}{2}} \Vert^2  \langle S C (CC')^{-\frac{1}{2}}f, C'(CC')^{-\frac{1}{2}} f\rangle_{\am} \\
&=&\Vert (CC')^{-\frac{1}{2}} \Vert^2  \langle S C (CC')^{-\frac{1}{2}}f, C'(CC')^{-\frac{1}{2}} f\rangle_{\am} \\
&=&\Vert (CC')^{-\frac{1}{2}} \Vert^2  \langle S  C^{\frac{1}{2}}C'^{-\frac{1}{2}}f, C'^{\frac{1}{2}} C^{-\frac{1}{2}} f\rangle_{\am} \\
&=&\Vert (CC')^{-\frac{1}{2}} \Vert^2  \langle C^{-\frac{1}{2}}C'^{\frac{1}{2}} S  C^{\frac{1}{2}}C'^{-\frac{1}{2}}f,f\rangle_{\am}\\
&=&\Vert (CC')^{-\frac{1}{2}} \Vert^2 \langle Sf,f\rangle_{\am}.
\end{eqnarray*}
Thus, 
\begin{align}
 A_{CC'} \Vert (CC')^{-\frac{1}{2}} \Vert^{-2}\langle  K^* f,K^*f\rangle_{\am} A^*_{CC'} \leq \langle Sf,f\rangle_{\am}.
\end{align}
\end{proof}
\subsection{Erasures  of submodules}
Our main result in this subsection provides sufficient conditions on the weights for a subspace to be deleted yet still leave a controlled $K$-fusion frame. in fact, the following result is a generalization of Theorem $(3.2)$ in Ref. \cite{CK08} to Hilbert $C^*$-modules for a controlled $K$-fusion frame. Assume that  $\{W_i\}_{i\in I}$ be a sequence of orthogonally complemented submodules of $\h$. Then the Hilbert $C^*$-submodule of $\h$ generated by $\{W_i\}_{i\in I}$ is denoted by $ <\{W_i\}_{i\in I\setminus \{J\}}>$.
\begin{thm}
Let  $\{W_i,w_i\}_{i\in I}$ be a $(C,C)$-controlled $K$-fusion frame for $\h$ with bounds $A$ and $B$ and $J\subset I$ such that $\sum_{i\in J}w_i^2$ is convegent in $\am$, assume that $\pi_{W_i}$ and $C^{-1}$ commute each other. If 
\begin{itemize}
\item[i)] If $B \Vert C^{-1}\Vert ^2 .1_{\am} < \sum_{i\in J}w_i^2$, then $\cap_{i\in J} W_i=\{0\}$.
\item[ii)] If $\Vert (K^\dagger)^*\Vert^2\sum_{i\in J}w_i^2< A .1_{\am} $, then $\{W_i,w_i\}_{i\in I\setminus \{J\}}$ is a $(C,C^{-1})$-controlled $K$-fusion frame for $\h$.
\end{itemize}
\end{thm}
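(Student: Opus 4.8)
The plan is to treat the two parts separately, in both cases exploiting the hypothesis that $\pi_{W_i}$ commutes with $C^{-1}$, which lets us move the controlling operators freely through the projections and compare the full controlled sum with the partial sum over $J$.

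For part (i), I would argue by contradiction. Suppose there is a nonzero $f\in\bigcap_{i\in J}W_i$. Then $\pi_{W_i}f=f$ for every $i\in J$, and because $\pi_{W_i}$ and $C^{-1}$ commute (hence, taking adjoints, $\pi_{W_i}$ and $C$ commute as well, $C$ being positive), one computes for $i\in J$
\begin{align*}
w_i^2\langle \pi_{W_i}Cf,\pi_{W_i}C f\rangle_{\am}=w_i^2\langle C\pi_{W_i}f,C\pi_{W_i}f\rangle_{\am}=w_i^2\langle Cf,Cf\rangle_{\am}\ge w_i^2\Vert C^{-1}\Vert^{-2}\langle f,f\rangle_{\am},
\end{align*}
using $\langle Cf,Cf\rangle_{\am}\ge \Vert C^{-1}\Vert^{-2}\langle f,f\rangle_{\am}$. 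Summing over $i\in J$ and comparing with the upper controlled frame bound $\sum_{i\in I}w_i^2\langle\pi_{W_i}Cf,\pi_{W_i}Cf\rangle_{\am}\le B\langle f,f\rangle_{\am}$ (here $B$ is the scalar bound, interpreted as $B\cdot 1_{\am}$), I would obtain
\[
\Big(\sum_{i\in J}w_i^2\Big)\Vert C^{-1}\Vert^{-2}\langle f,f\rangle_{\am}\le B\langle f,f\rangle_{\am},
\]
i.e. $\sum_{i\in J}w_i^2\le B\Vert C^{-1}\Vert^2\cdot 1_{\am}$ after multiplying by the positive invertible central element and using that $\langle f,f\rangle_{\am}\neq 0$. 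This contradicts the hypothesis $B\Vert C^{-1}\Vert^2\cdot 1_{\am}<\sum_{i\in J}w_i^2$, so $\bigcap_{i\in J}W_i=\{0\}$.

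For part (ii), the upper bound for $\{W_i,w_i\}_{i\in I\setminus J}$ with the control pair $(C,C^{-1})$ is routine: drop the nonnegative terms indexed by $J$ from the $(C,C^{-1})$-controlled sum (note $C^{-1*}\pi_{W_i}C=\pi_{W_i}$ after commuting, so every summand is positive) and use the original upper bound, picking up a harmless factor of $\Vert C^{-1}\Vert$ or $\Vert C\Vert$ in the constant. The lower bound is the heart of the matter. Writing $S_J f=\sum_{i\in J}w_i^2\pi_{W_i}f$ (positive, bounded by $\sum_{i\in J}w_i^2$ in norm after commuting $C$ through), one has for all $f$
\begin{align*}
\sum_{i\in I\setminus J}w_i^2\langle\pi_{W_i}Cf,\pi_{W_i}C^{-1}f\rangle_{\am}
=\sum_{i\in I}w_i^2\langle\pi_{W_i}f,\pi_{W_i}f\rangle_{\am}-\langle S_J f,f\rangle_{\am},
\end{align*}
where I have used the commutation relations to rewrite the $(C,C^{-1})$-controlled summand as $\langle\pi_{W_i}f,\pi_{W_i}f\rangle_{\am}$. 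The first term is bounded below by $A\langle K^*f,K^*f\rangle_{\am}$ (again $A$ scalar) from the original frame inequality, and the subtracted term satisfies $\langle S_J f,f\rangle_{\am}\le\big(\sum_{i\in J}w_i^2\big)\langle f,f\rangle_{\am}$, while $\langle f,f\rangle_{\am}\le\Vert(K^\dagger)^*\Vert^2\langle K^*f,K^*f\rangle_{\am}$ on $Ran(K)$ — and since $K^*$ annihilates $Ran(K)^\perp$ and $Ran(K)$ is assumed orthogonally complemented (as $K$ has closed range), it suffices to verify the lower bound on $Ran(K)$ exactly as in the proof of the preceding theorem on erasures. Combining,
\[
\sum_{i\in I\setminus J}w_i^2\langle\pi_{W_i}Cf,\pi_{W_i}C^{-1}f\rangle_{\am}\ge\Big(A\cdot 1_{\am}-\Vert(K^\dagger)^*\Vert^2\sum_{i\in J}w_i^2\Big)\langle K^*f,K^*f\rangle_{\am},
\]
and the hypothesis $\Vert(K^\dagger)^*\Vert^2\sum_{i\in J}w_i^2<A\cdot 1_{\am}$ makes the central coefficient a strictly positive invertible element, which after taking a square root provides a genuine lower controlled $*$-$K$-fusion frame bound.

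\textbf{Main obstacle.} The delicate point is bookkeeping with the noncommutativity of $\am$: the ``bound'' $A$ is a scalar here but the quantities $\sum_{i\in J}w_i^2$ and $\Vert(K^\dagger)^*\Vert^2$ are central, so one must justify that $A\cdot 1_{\am}-\Vert(K^\dagger)^*\Vert^2\sum_{i\in J}w_i^2$ is positive and invertible and that multiplying the operator inequality $\langle f,f\rangle_{\am}\le\Vert(K^\dagger)^*\Vert^2\langle K^*f,K^*f\rangle_{\am}$ by this central element preserves the inequality — this uses that central positive elements have central square roots, a fact recalled in the Preliminaries. The second subtlety is that the frame operator $S_J$ is only a priori bounded, not obviously comparable to $\big(\sum_{i\in J}w_i^2\big)1_{\am}$ as operators; this comparison needs the commutation of $\pi_{W_i}$ with $C^{-1}$ together with the elementary estimate $\langle S_J f,f\rangle_{\am}=\sum_{i\in J}w_i^2\langle\pi_{W_i}f,\pi_{W_i}f\rangle_{\am}\le\sum_{i\in J}w_i^2\langle f,f\rangle_{\am}$, which is the only place the convergence of $\sum_{i\in J}w_i^2$ in $\am$ is genuinely used.
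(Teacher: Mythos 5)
Your argument follows essentially the same route as the paper's: in (i) you use $\pi_{W_i}f=f$ on $\bigcap_{i\in J}W_i$, the commutation of $\pi_{W_i}$ with $C^{\pm1}$, and the upper controlled bound to trap $\bigl(\sum_{i\in J}w_i^2\bigr)\langle f,f\rangle_{\am}$ between multiples of $\langle f,f\rangle_{\am}$; in (ii) you split off the $J$-part and estimate it by $\bigl(\sum_{i\in J}w_i^2\bigr)\Vert(K^\dagger)^*\Vert^2\langle K^*f,K^*f\rangle_{\am}$ via the Moore--Penrose inverse, which is exactly the paper's computation (your version is in fact more detailed than the paper's one-line display, and, unlike the paper's, it is phrased for the $(C,C^{-1})$-controlled sum that the statement actually asserts).

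One step in (i) needs repair: from $\bigl(\sum_{i\in J}w_i^2\bigr)\Vert C^{-1}\Vert^{-2}\langle f,f\rangle_{\am}\le B\langle f,f\rangle_{\am}$ you cannot ``cancel'' $\langle f,f\rangle_{\am}$ to deduce $\sum_{i\in J}w_i^2\le B\Vert C^{-1}\Vert^2 1_{\am}$; a nonzero positive element of a $C^*$-algebra need not be invertible, so $ac\le bc$ with $c\ne 0$ does not give $a\le b$. The fix is the paper's: rewrite the inequality as $\bigl(\sum_{i\in J}w_i^2-B\Vert C^{-1}\Vert^2 1_{\am}\bigr)\langle f,f\rangle_{\am}\le 0$ and use that this central coefficient is positive and invertible by hypothesis to conclude $\langle f,f\rangle_{\am}\le 0$, hence $f=0$ directly, with no contradiction needed. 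Two smaller points, which you share with the paper and which you partly flag yourself: in (ii) the hypothesis gives a lower bound for $\sum_{i}w_i^2\langle C\pi_{W_i}f,C\pi_{W_i}f\rangle_{\am}$, so bounding the uncontrolled sum $\sum_{i}w_i^2\langle\pi_{W_i}f,\pi_{W_i}f\rangle_{\am}$ below by $A\langle K^*f,K^*f\rangle_{\am}$ silently drops a factor $\Vert C\Vert^{-2}$; and the reduction to $Ran(K)$ requires $\pi_{W_i}f=\pi_{W_i}f_1$, i.e. $W_i\subset Ran(K)$, which is assumed in the earlier erasure theorem but not stated here. Neither issue is addressed in the paper's own proof either.
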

\begin{proof}
\begin{itemize}
\item[i)] Let $f\in \cap_{i\in J} W_i$, since $\pi_{W_i}f= f$ for each $i\in J$ and $\sum_{i\in J}w_i^2\in \mathcal{Z}(\am)$, we have \begin{eqnarray*}
B \Vert C^{-1}\Vert ^2\langle  f,f\rangle_{\am}
&\leq & \sum_{i\in J}w_i^2\langle \pi_{W_i} C^{-1}C f,\pi_{W_i} C^{-1}C f\rangle _{\am}\\
&\leq & \Vert C^{-1}\Vert ^2 \sum_{i\in I}w_i^2\langle \pi_{W_i} C f,\pi_{W_i}Cf \rangle_{\am} \\
&\leq & B\Vert C^{-1}\Vert ^2\langle f,f\rangle_{\am}.
\end{eqnarray*}
 $( \sum_{i\in J}w_i^2-B \Vert C^{-1}\Vert ^2.1_{\am})f=0$, and since $\sum_{i\in J}w_i^2-B \Vert C^{-1}\Vert ^2.1_{\am}$ is invertible, we get $\langle f,f\rangle=0$ consequently $f=0$.
\item[ii)] Since $\sum_{i \in I} \omega_{i}^{2}\left\langle\pi_{W_{i}}Cf, \pi_{W_{i}}Cf\right\rangle_{\am}$ converges in norm for each $f\in \h$, we obtain that $\sum_{i \in I-J} \omega_{i}^{2}\left\langle\pi_{W_{i}} Cf, \pi_{W_{i}} Cf\right\rangle_{\am}$ converges in norm and also,
\begin{eqnarray*}
\left\|\left(A .1_{\am}- \Vert (K^\dagger)^*\Vert^2\left(\sum_{i \in J} \omega_{i}^{2}\right)\right)^{-1}\right\|^{-1}\langle f, f\rangle_{\am} &\leq&\left(A \cdot 1_{\am}- \Vert (K^\dagger)^*\Vert^2 \left(\sum_{j \in J} \omega_{i}^{2}\right)\right)\langle f,f\rangle_{\am}\\
&\leq &\Vert (K^\dagger)^*\Vert^2\sum_{i \in I-J} \omega_{i}^{2}\left\langle\pi_{W_{i}} Cf, \pi_{W_{i}} Cf\right\rangle_{\am}.
\end{eqnarray*}
Therefore, $ \mathcal{W}$ is a standard  $(C,C)$-controlled $K$-fusion frame.
\end{itemize}
\end{proof}
\section{Perturbation of controlled $K$-fusion frame}
In this section, we introduce a new perturbation for  controlled $K$-fusion frames in Hilbert $C^*$-modules which controlled $K$-fusion frames are stable under it.
\begin{defi}
Let $\left\{W_{i}\right\}_{i \in I}$ and $\left\{V_{i}\right\}_{i \in I}$ be two families of orthogonally complemented submodules of $\h .$ Let $\left\{v_{i}\right\}_{i \in I}$ be a sequence of weights, $\left\{a_{i}\right\}_{i \in I} \in \ell^{2}(I, \am)$ and $a_{1}, a_{2} \in \am$ with $\left\|a_{1}\right\|_{2}\left\|a_{2}\right\|<1$. If
\begin{align}
\label{def: perturbation}
\left|v_{i} C'^*\left(\pi_{W_{i}}-\pi_{V_{i}}\right) C f\right|^{2} \leq\left|a_{1}\mid v_{i} C'^* \pi_{W_{i}} C f\mid+a_{2}\mid v_{i}  C'^*\pi_{V_{i}} C f\mid+\mid f\mid a_{i}\right|^{2},
\end{align}
for each $ f\in \h $ and $i \in I$, then we say that $\left\{\left(V_{i}, v_{i}\right)\right\}_{i \in I}$ is an $\left(a_{1}, a_{2},\left\{a_{i}\right\}_{i \in I}\right)$- controlled perturbation of $\left\{\left(W_{i}, v_{i}\right)\right\}_{i \in I}$.

\end{defi}
\begin{thm}  \label{thm: perturbation}
Let $K\in End^*_{\am}(\h)$ be a closed range, suppose that $W=\left\{\left(W_{i}, v_{i}\right)\right\}_{i \in I}$ is a standard $(C,C')$-conrolled $K$-fusion frame with lower bound $A$ and $V=\left\{\left(V_{i}, v_{i}\right)\right\}_{i \in I}$ is an $\left(a_{1}, a_{2},\left\{a_{i}\right\}_{i \in I}\right)$-controlled perturbation of $W .$ If $\left\|\left\{a_{i}\right\}_{i \in I}\right\|_{2}<$ $\left(1-\left\|a_{1}\right\|\right) \sqrt{A}$, then $V$ is a standard $(C,C')$-controlled $K$-fusion frame.
\end{thm}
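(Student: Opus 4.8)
I would run the classical Christensen-type perturbation argument, transplanted to Hilbert $C^{*}$-modules: compare the controlled analysis-type operators of $V$ and $W$, bound their difference by means of \eqref{def: perturbation}, and then read off both $*$-$K$-fusion frame inequalities for $V$ from those of $W$ via the triangle inequality. For a family $\mathcal U=\{(U_i,v_i)\}_{i\in I}$ of orthogonally complemented submodules set $L_{\mathcal U}f=(v_i\,C'^{*}\pi_{U_i}C f)_{i\in I}\in\ell^{2}(I,\h)$, so that $\|L_{\mathcal U}f\|^{2}=\big\|\sum_{i}v_i^{2}|C'^{*}\pi_{U_i}Cf|^{2}\big\|$; via the norm characterization of Theorem \ref{th: characterizationnorm} (modulo the usual square-root bookkeeping), the assertion ``$\mathcal U$ is a standard $(C,C')$-controlled $K$-fusion frame with bounds $A,B$'' amounts to $\sqrt A\,\|K^{*}f\|\le\|L_{\mathcal U}f\|\le\sqrt B\,\|f\|$ for all $f\in\h$. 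Since the $V_i$ are orthogonally complemented by hypothesis, it suffices to produce such constants for $V$.

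\noindent\emph{Perturbation estimate.} Because $t\mapsto\sqrt t$ is operator monotone on $\am_{+}$, \eqref{def: perturbation} gives $\big|v_i C'^{*}(\pi_{W_i}-\pi_{V_i})Cf\big|\le\big|\,a_1|v_i C'^{*}\pi_{W_i}Cf|+a_2|v_i C'^{*}\pi_{V_i}Cf|+|f|a_i\,\big|$ in $\am_{+}$ for every $i$ and $f$. Squaring, summing over a finite $J\subset I$, and combining $\big\|\sum_{i\in J}x_i^{2}\big\|\le\big\|(x_i)_{i\in J}\big\|_{\ell^{2}(I,\am)}^{2}$ with the triangle inequality in $\ell^{2}(I,\am)$ and the elementary bound $x a^{*}a\,x\le\|a\|^{2}x^{2}$ for $x\in\am_{+}$, one obtains, uniformly in $J$,
$$\big\|L_Wf|_J-L_Vf|_J\big\|\ \le\ \|a_1\|\,\|L_Wf|_J\|+\|a_2\|\,\|L_Vf|_J\|+\big\|\{a_i\}_{i\in I}\big\|_2\,\|f\|.$$
Working with finite $J$ first keeps all terms meaningful; the uniform bound is exactly what forces the $V$-series to converge a posteriori, legitimizing the passage $J\uparrow I$ and hence the same inequality with $L_W,L_V$ in place of their restrictions.

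\noindent\emph{The two bounds.} For the Bessel bound, $\|L_Vf\|\le\|L_Wf\|+\|L_Wf-L_Vf\|$ and the estimate above, after moving the $\|a_2\|\,\|L_Vf\|$-term to the left, give $(1-\|a_2\|)\,\|L_Vf\|\le\big((1+\|a_1\|)\sqrt B+\|\{a_i\}\|_2\big)\|f\|$; using the smallness of $a_2$ (from $\|a_1\|\,\|a_2\|<1$) this is an explicit upper bound $\sqrt{B'}$ for $V$ and supplies the convergence invoked above. For the lower bound, symmetrically $(1-\|a_1\|)\,\|L_Wf\|\le(1+\|a_2\|)\,\|L_Vf\|+\|\{a_i\}\|_2\,\|f\|$, whence, inserting $\|L_Wf\|\ge\sqrt A\,\|K^{*}f\|$,
$$(1+\|a_2\|)\,\|L_Vf\|\ \ge\ (1-\|a_1\|)\sqrt A\,\|K^{*}f\|-\big\|\{a_i\}\big\|_2\,\|f\|.$$

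\noindent\emph{Main obstacle.} The crux is turning this last inequality into an honest $K$-fusion lower bound, since a priori $\|f\|$ is not dominated by $\|K^{*}f\|$. This is precisely where the two hypotheses have to interact: $K$ has closed range, so $K^{\dagger}$ exists and bounds $\|f\|$ against $\|K^{*}f\|$ on $\mathrm{Im}(K)$ (as in the discussion preceding the Remark in \S3, where $\|K^{*}f\|\ge\|K^{\dagger}\|^{-1}\|f\|$), and the strict inequality $\|\{a_i\}\|_2<(1-\|a_1\|)\sqrt A$ is exactly what keeps the resulting coefficient of $\|K^{*}f\|$ strictly positive, producing a lower $*$-$K$-fusion frame bound $\sqrt{A'}>0$ for $V$. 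Together with the Bessel bound this shows $V$ is a standard $(C,C')$-controlled $K$-fusion frame. Everything else — the $\ell^{2}(I,\am)$ triangle-inequality computation of the perturbation estimate and the two rearrangements — is routine, the only genuine care-point being the a posteriori justification of convergence of the $V$-series.
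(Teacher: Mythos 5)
Your proposal is correct and follows essentially the same route as the paper's proof: bound the $\ell^{2}(I,\h)$-norms over finite index sets via the triangle inequality and the perturbation hypothesis, rearrange to get the Bessel bound (dividing by $1-\|a_{2}\|$) and the lower bound (dividing by $1+\|a_{2}\|$), and then convert $\|f\|$ into a multiple of $\|K^{*}f\|$ through the Moore--Penrose inverse afforded by the closed range of $K$. The one care-point you flag (a posteriori convergence of the $V$-series and positivity of the resulting lower constant) is handled in the paper exactly as you describe, so no substantive difference remains.
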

\begin{proof}
Let $J$ be a finite subset of $I$, $f \in \h$ and let $B$ be an upper bound for $\Lambda$. Then from (\ref{def: perturbation}), for each $f \in \h$, we can obtain that
$$
\begin{aligned}
\left\|\left\{v_{i} C'^*\left(\pi_{W_{i}}-\pi_{V_{i}}\right) C f\right\}_{i \in F}\right\|_{l^2(\h)}& \leq\left\|a_{1}\mid v_{i} C'^* \pi_{W_{i}} C f\mid+a_{2}\mid v_{i}  C'^*\pi_{V_{i}} C f\mid+\mid f\mid a_{i}\}_{i \in F}\right\|_{l^2(\h)} \\
& \leq\left\|a_{1}\right\|\left\|\left\{ v_i C'^* \pi_{W_{i}} C f\right\}_{i \in F}\right\|_{l^2(\h)}\\&+\left\|a_{2}\right\|\left\|\left\{ v_i C'^* \pi_{V_{i}} C f\right\}_{i \in F}\right\|_{l^2(\h)}+\|f\|\left\|\left\{a_{i}\right\}_{i \in F}\right\|_{l^2(\h)}.
\end{aligned}.
$$
Now we have

\begin{eqnarray*}
\left\|\left\{ v_i C'^* \pi_{V_{i}} C f\right\}_{i \in F}\right\|_{l^2(\h)} & \leq & \left\|\left\{ v_i C'^* \pi_{W_{i}} C f\right\}_{i \in F}\right\|_{l^2(\h)}+\left\|\left\{\left( v_i C'^* (\pi_{W_{i}}- \pi_{W_{i}}) C f\right) f\right\}_{i \in F}\right\|_{l^2(\h)} \\
& \leq & \left(1+\left\|a_{1}\right\|\right)\left\|\left\{ v_i C'^* \pi_{W_{i}} C f\right\}_{i \in F}\right\|_{l^2(\h)}\\
&+& \left\|a_{2}\right\|\left\|\left\{ v_i C'^* \pi_{V_{i}} C f\right\}_{i \in F}\right\|_{l^2(\h)}+\|f\|\left\|\left\{a_{i}\right\}_{i \in F}\right\|_{l^2(\h)}.
\end{eqnarray*}

Hence\begin{align*}
\left\|\left\{ v_i C'^* \pi_{V_{i}} C f\right\}_{i \in F}\right\|_{l^2(\h)} \leq \frac{1}{1-\left\|a_{2}\right\|}\left((1+\left\|a_{1}\right\|\right)\left\|\left\{ v_i C'^* \pi_{W_{i}} C f\}_{i \in F}\right\|_{l^2(\h)}+\|f\|\left\|\left\{a_{i}\right\}_{i \in F}\right\|_{l^2(\h)}\right)
\end{align*}
Since $F$ is an arbitrary subset of $I$, we have
$$
\left\|\sum_{i \in I}v_i^2\left\langle \pi_{V_{i}}Cf,\pi_{V_{i}}C' f\right\rangle_{\am}\right\|=\left\|\left\{v_i C'^* \pi_{V_{i}} C f\right\}_{i \in I}\right\|_{l^2(\h)}^{2} \leq \frac{1}{\left(1-\left\|a_{2}\right\|\right)^{2}}\left(\left(1+\left\|a_{1}\right\|\right) \sqrt{B}+\left\|\left\{a_{i}\right\}_{i \in I}\right\|_{l^2(\h)}\right)^{2}\|f\|^{2}
$$
Also
\begin{eqnarray*}
\left\|\left\{v_i C'^* \pi_{V_{i}} C f\right\}_{i \in F}\right\|_{l^2(\h)} & \geq &\left\|\left\{ v_i C'^* \pi_{W_{i}} C f\right\}_{i \in F}\right\|_{l^2(\h)}-\left\|\left\{ v_i C'^*\left(\pi_{W_{i}} - \pi_{V_{i}}\right )C f \right\}_{i \in F}\right\|_{l^2(\h)} \\
& \geq &\left(1-\left\|a_{1}\right\|\right)\left\|\left\{v_i C'^* \pi_{W_{i}} C f\right\}_{i \in F}\right\|_{l^2(\h)}\\&-&\left\|a_{2}\right\|\left\|\left\{v_i C'^* \pi_{V_{i}} C f\right\}_{i \in F}\right\|_{l^2(\h)}-\|f\|\left\|\left\{a_{i}\right\}_{i \in F}\right\|_{l^2(\h)},
\end{eqnarray*}
thus
$$
\left\|\left\{v_i C'^* \pi_{V_{i}} C f\right\}_{i \in F}\right\|_{l^2(\h)}\geq \frac{1}{1+\left\|a_{2}\right\|}\left(\left(1-\left\|a_{1}\right\|\right)\left\|\left\{v_i C'^* \pi_{W_{i}} C f\right\}_{i \in F}\right\|_{l^2(\h)}-\|f\|\left\|\left\{a_{i}\right\}_{i \in F}\right\|_{l^2(\h)}\right)
$$
$$
\begin{aligned}
\left\|\sum_{i \in I}\left\langle\pi_{V_{i}}Cf,\pi_{V_{i}}C' f\right\rangle\right\|^{\frac{1}{2}} &=\left\|\left\{ v_i C'^* \pi_{V_{i}} C f\right\}_{i \in I}\right\|_{l^2(\h)} \\
& \geq \frac{1}{1+\left\|a_{2}\right\|}\left(\left(1-\left\|a_{1}\right\|\right)\left\|\left\{ v_i C'^* \pi_{W_{i}} C f\right\}_{i \in I}\right\|_{l^2(\h)}-\|f\|\left\|\left\{a_{i}\right\}_{i \in I}\right\|_{l^2(\h)}\right) \\
& \geq \frac{1}{1+\left\|a_{2}\right\|}\left(\left(1-\left\|a_{1}\right\|\right) \sqrt{A}- \|(K^\dagger)\|^{-2}\left\|\left\{a_{i}\right\}_{i \in I}\right\|_{l^2(\h)}\right)\|K^*f\|
\end{aligned}
$$
Thus by Theorem $(3.1)$ in Ref. \cite{XZ08}, $V=\left\{\left(V_{i}, v_{i}\right)\right\}_{i \in I}$ is a standard $(C,C')$-controlled $K$-fusion frame.

\end{proof}
\begin{cor}
Let $K\in End^*_{\am}(\h)$ be a closed range, and suppose that $W=\left\{\left(W_{i}, v_{i}\right)\right\}_{i \in I}$ is a standard $(C,C')$-controlled $K$-fusion frame with lower bound $A, a_{1}, a_{2}, a \in \mathfrak{A}$ with $\left\|a_{1}\right\|,\left\|a_{2}\right\|<1$ and $v=\left\{v_{i}\right\}_{i \in I} \in \ell^{2}(I, \am)$ is $a$ sequence of weights. If $V=\left\{\left(V_{i}, v_{i}\right)\right\}_{i \in I}$ satisfies
$$
\left|\left (C'^*(\pi_{W_{i}}-\pi_{V_{i}}\right)C f\right|^{2} \leq\left|a_{1} \right| C'^* \pi_{W_{i}} Cf\left|+a_{2}\right|  C'^* \pi_{V_{i}} C f|+| f|a|^{2}
$$
for each $f \in \h, i \in I$ and $\|a v\|_{2}<\left(1-\left\|a_{1}\right\|\right) \sqrt{A}$, then $V$ is a standard $(C,C')$-controlled $K$-fusion frame

\end{cor}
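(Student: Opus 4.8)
The plan is to obtain this corollary as a direct specialization of Theorem \ref{thm: perturbation}, taking the perturbation sequence to be $a_i := a v_i$ for $i\in I$. Concretely, I would show that with this choice the two hypotheses of Theorem \ref{thm: perturbation} hold: that $\{a_i\}_{i\in I}\in\ell^2(I,\am)$ with $\|\{a_i\}_{i\in I}\|_2 = \|av\|_2 < (1-\|a_1\|)\sqrt{A}$, and that the pointwise estimate (\ref{def: perturbation}) is satisfied, so that $V$ is an $(a_1,a_2,\{a v_i\}_{i\in I})$-controlled perturbation of $W$; the conclusion then follows from the theorem.

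First I would record the elementary facts about the weights that make the bookkeeping go through. Since each $v_i$ is a positive invertible element of the center $\mathcal{Z}(\am)$, for any $x\in\h$ one has $\langle v_i x, v_i x\rangle_{\am} = v_i^2\langle x,x\rangle_{\am}$, hence $|v_i x| = v_i|x|$; likewise for $b\in\am$ the centrality of $v_i$ gives $v_i b = b v_i$ and $v_i|b| = |v_i b|$, and conjugation of an inequality $x\le y$ in $\am$ by the self-adjoint central element $v_i$ yields $v_i^2 x\le v_i^2 y$. For the $\ell^2$ condition, note that $v=\{v_i\}_{i\in I}\in\ell^2(I,\am)$ means $\sum_{i\in I}v_i^2 = \sum_{i\in I}\langle v_i,v_i\rangle_{\am}$ converges in norm, whence $\sum_{i\in I}\langle a v_i, a v_i\rangle_{\am} = \sum_{i\in I} a v_i^2 a^* = a\big(\sum_{i\in I}v_i^2\big)a^*$ converges as well; thus $\{a_i\}_{i\in I}=\{a v_i\}_{i\in I}\in\ell^2(I,\am)$, and $\|\{a_i\}_{i\in I}\|_2 = \|av\|_2 < (1-\|a_1\|)\sqrt{A}$ is exactly the standing hypothesis.

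Next I would derive (\ref{def: perturbation}). Starting from the assumed inequality of the corollary and multiplying both sides by $v_i^2$, the left-hand side becomes $v_i^2|C'^*(\pi_{W_i}-\pi_{V_i})Cf|^2 = |v_i C'^*(\pi_{W_i}-\pi_{V_i})Cf|^2$, while writing $z := a_1|C'^*\pi_{W_i}Cf| + a_2|C'^*\pi_{V_i}Cf| + |f|a$ the right-hand side becomes $v_i^2|z|^2 = v_i^2 z^*z = (v_i z)^*(v_i z) = |v_i z|^2$; pushing $v_i$ through $z$ using centrality and the identities above gives $v_i z = a_1|v_i C'^*\pi_{W_i}Cf| + a_2|v_i C'^*\pi_{V_i}Cf| + |f|\,a_i$, which is precisely the expression whose square modulus appears on the right of (\ref{def: perturbation}). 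Since $\|a_1\|,\|a_2\|<1$ secures the admissibility requirement in Definition \ref{def: perturbation} and $W$ is by assumption a standard $(C,C')$-controlled $K$-fusion frame with lower bound $A$, Theorem \ref{thm: perturbation} applies and gives that $V=\{(V_i,v_i)\}_{i\in I}$ is a standard $(C,C')$-controlled $K$-fusion frame.

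The only step requiring genuine care is the second half of the middle paragraph: the manipulation of the $\am$-valued moduli $|\cdot|$ together with the centrality of the $v_i$ — in particular verifying $|v_i x| = v_i|x|$, that the scalar factor $|f|$ commutes past $v_i$ so that $v_i|f|a = |f|a_i$, and that conjugation by $v_i$ preserves the order — so that multiplying the corollary's inequality by $v_i^2$ reproduces (\ref{def: perturbation}) on the nose rather than merely up to constants. Everything else is a mechanical application of the preceding theorem.
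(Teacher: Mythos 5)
Your proposal is correct and follows essentially the same route as the paper: conjugate the assumed inequality by the central positive weight $v_i$, use centrality to absorb $v_i$ into each modulus so that condition (\ref{def: perturbation}) holds with $a_i = a v_i$, and then invoke Theorem \ref{thm: perturbation}. The only difference is that you explicitly verify $\{a v_i\}_{i\in I}\in\ell^2(I,\am)$ and the identity $|v_i x| = v_i|x|$, which the paper leaves implicit.
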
 
\begin{proof}
Using (\ref{def: perturbation}) and Theorem ($(2.2 .6)$) in Ref. \cite{Mu90}, we get 
$$
v_{i}\left|\left( C'^*(\pi_{W_{i}}-\pi_{V_{i}}\right)C f\right|^{2} v_{i} \leq v_{i}\left|a_{1}\right|C'^* \pi_{W_{i}}C f\left|+a_{2}\right|C'^* \pi_{V_{i}} C f|+| f|a|^{2} v_{i}
$$
for each $f \in \h$ and $i \in I .$ Since $v_{i}$ 's are positive elements of $\mathcal{Z}(\am)$, the above inequality implies that
$$
\left|v_{i}C'^*\left(\pi_{W_{i}}-\pi_{V_{i}}\right) C f\right|^{2} \leq\left|a_{1}\right| v_{i}C'^* \pi_{W_{i}}C f\left|+a_{2}\right| v_{i} C'^* \pi_{V_{i}} C f|+| f\left|a v_{i}\right|^{2}
$$ $\left\|\left\{a v_{i}\right\}_{i \in I}\right\|_{l^2(\h)}<\left(1-\left\|a_{1}\right\|\right) \sqrt{A}$, the result follows from Theorem (\ref{thm: perturbation}).
\end{proof}
\bibliographystyle{amsplain}

\end{document}